\renewcommand{\subsection}[1]{\medskip\noindent\textbf{#1.} }
\def\marginpar#1{}   
\let\lbl=\label
\def\label#1{\lbl{#1}\ifinner\else\marginpar{\ref{#1} #1}\ignorespaces\fi}
\newcommand\R{\mathbb{R}}
\newcommand\<{\langle}
\newcommand\cross{\times}
\newcommand\dcsd{\operatorname{dcsd}}
\newcommand\Thi{\operatorname{Thi}}
\newcommand\Rop{\operatorname{Rop}}
\newcommand\PThi{\operatorname{Thi}_p}
\newcommand\PRop{\operatorname{Rop}_p}
\newcommand\MinRad{\operatorname{MinRad}}
\newcommand\Eq{\operatorname{Eq}}
\newcommand\grad{\nabla}
\providecommand{\abs}[1]{\lvert#1\rvert} 
\providecommand{\norm}[1]{\lVert#1\rVert}
\renewcommand{\phi}{\varphi}
\newcommand{\Len}{\operatorname{Len}}
\newcommand{\VB}{\operatorname{VB}}
\newcommand{\V}{\mathcal{V}}
\newcommand{\IM}{\operatorname{I}}
\newcommand{\RM}{\operatorname{R}}
\providecommand{\abs}[1]{\lvert#1\rvert} 
\providecommand{\norm}[1]{\lVert#1\rVert}
\newtheorem{theorem}{Theorem}[section]
\newtheorem{lemma}[theorem]{Lemma}
\newtheorem{corollary}[theorem]{Corollary}
\newtheorem{proposition}[theorem]{Proposition}
\theoremstyle{definition}
\newtheorem{definition}[theorem]{Definition}
\newcommand{\vb}{\operatorname{vb}}
\newcommand{\co}{\operatorname{:}}
\newcommand{\CThi}{\operatorname{CThi}}
\newcommand{\Strut}{\operatorname{Strut}}
\newcommand{\Kink}{\operatorname{Kink}}
\newcommand{\FourOnePrev}{42.1158845}
\newcommand{\NineTwentyPrev}{87.31}
\newcommand{\FourOnePrevCite}{mglob}
\newcommand{\NineTwentyPrevCite}{MR2034393}
\newcommand{\TenOneHundredAndTwentyThreeRRUB}{92.3565}
\newcommand{\TwoTwoOneRRUB}{25.1334}
\newcommand{\FourOneRRUB}{42.0887}
\newcommand{\SixThreeTwoRRUB}{58.0070}
\newcommand{\SixThreeThreeRRUB}{50.5539}
\newcommand{\EightEighteenRRUB}{74.9063}
\newcommand{\NineTwentyRRUB}{80.2219}
\newcommand{\SimpleChainRRUB}{41.7086588}
\newcommand{\FileCount}{379}
\newcommand{\StraightCount}{338}
\newcommand{\KinkCount}{372}
\newcommand{\DoubleOOneCount}{375}
\newcommand{\AverageResidual}{0.00299}
\newcommand{\TripleOOneCount}{202}
\newcommand{\AverageStruts}{919}
\newcommand{\AverageMrlocs}{19}
\newcommand{\AverageEdges}{658}
\newcommand{\AverageEdgelength}{0.061}
\newcommand{\MatrixDims}{938 \times 1974}
\newcommand{\MatrixEnts}{1851612}
\newcommand{\MatrixNNZ}{11199}
\newcommand{\MatrixSparsity}{99.4\%}
\newcommand{\AverageWin}{3.619}
\newcommand{\WonClaimCases}{316}
\newcommand{\LostClaimCases}{36}
\newcommand{\AverageClaimWin}{1.075}
 \gdef\xxxmark{%
   \expandafter\ifx\csname @mpargs\endcsname\relax 
     \expandafter\ifx\csname @captype\endcsname\relax 
       \marginpar{xxx}
     \else
       xxx 
     \fi
   \else
     xxx 
   \fi}
 \gdef\xxx{\@ifnextchar[\xxx@lab\xxx@nolab}
 \long\gdef\xxx@lab[#1]#2{{\bf [\xxxmark #2 ---{\sc #1}]}}
 \long\gdef\xxx@nolab#1{{\bf [\xxxmark #1]}}
\newcommand{\TAUCS}{\texttt{TAUCS}}
\newcommand{\tsnnls}{{\texttt{tsnnls}}}
\newcommand{\tlsqr}{\texttt{tlsqr}}
\newcommand{\ridgerunner}{{\texttt{ridgerunner}}}
\newcommand{\octrope}{{\texttt{octrope}}}
\begin{document}
\bibliographystyle{plain}

\title{Knot tightening by constrained gradient descent}
\date{June 12, 2004; Revised: \today}

\author{Ted Ashton}

\author{Jason Cantarella}

\author{Michael Piatek}

\author{Eric Rawdon}

\begin{abstract}
  We present new computations of approximately length-minimizing polygons with fixed
  thickness. These curves model the centerlines of ``tight'' knotted tubes with minimal length and fixed circular cross-section. Our curves approximately minimize the ropelength (or quotient of length and thickness) for polygons in their knot types. While previous authors have minimized ropelength for polygons using simulated annealing, the
  new idea in our code is to minimize length over the set of polygons
  of thickness at least one using a version of constrained gradient
  descent.

  We rewrite the problem in terms of minimizing the length of the
  polygon subject to an infinite family of differentiable constraint
  functions. We prove that the polyhedral cone of variations of a
  polygon of thickness one which do not decrease thickness to first
  order is finitely generated, and give an explicit set of generators.
  Using this cone we give a first-order minimization procedure and a
  Karush-Kuhn-Tucker criterion for polygonal ropelength criticality.

  Our main numerical contribution is a set of \FileCount\  almost-critical knots and links, including all prime knots with ten and fewer crossings and all prime links with nine and fewer crossings. For links, these are the first published ropelength figures, and for knots they improve on existing figures. We give new maps of the self-contacts of these knots and links, and discover some highly symmetric tight knots with particularly simple looking self-contact maps.
\end{abstract}

\keywords{ropelength, tight knots, ideal knots, constrained gradient
  descent, sparse non-negative least squares problem (snnls),
  knot-tightening}

\maketitle

\section{Introduction}

\subsection{Overview}
Knots tied in rope are flexible machines which organize tensions and
contact forces to bind tightly and resist unravelling. As a
technology, knots have proved remarkably effective. For this reason
there is a vast body of knowledge about their practical uses. Yet in
many ways, the design of these machines remains mysterious. As early
as 1987 Maddocks and Keller were able to study different types of
hitches and predict their holding power by an analysis of their
equilibrium shapes~\cite{MR89g:73037}. But these shapes were rather
simple, and there was no way to infer the structures of more
complicated knots from these examples. It was obvious that what was
needed was data, and by the end of the century a series of numerical
experiments in knot-tightening were
underway~\cite{rawdonthesis,MR1702021,MR1702022,MR1953340}. This paper
describes a new computational approach to knot-tightening which yields
improved numerical results (a preliminary report on some of our findings appeared in the conference proceedings~\cite{cprvis}). To build our method, we derive some new
results in the theory of ropelength for polygonal knots.

\subsection{Defining the problem}
Given any space curve $\gamma$, we can define the \emph{thickness}
$\Thi(\gamma)$ of $\gamma$ to be the supremal $\epsilon$ for which any
point in an $\epsilon$-neighborhood of $\gamma$ has a unique nearest
neighbor on the curve\footnote{Federer referred to this number as the
\emph{reach} of $\gamma$~\cite{MR22:961}.}. Any curve with nonzero
thickness is $C^{1,1}$ (that is, is $C^1$ with a Lipschitz first
derivative)~\cite{MR22:961,MR2003h:58014}. Given this, it has been
shown that
\begin{proposition}[\cite{MR99k:57025}]
  If $\gamma$ is a $C^1$ curve, then the thickness $\Thi(\gamma)$ is
  given by the supremal radius of all embedded tubes formed by
  taking the union of disks of uniform radius centered on $\gamma(s)$ in the planes
  normal to $\gamma'(s)$.
\end{proposition}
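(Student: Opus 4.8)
The plan is to prove the equality by two inequalities. Write $R = \Thi(\gamma)$ for the reach (the quantity in the definition), and write $\rho$ for the supremal radius of an embedded normal tube, i.e.\ the largest $r$ for which the map $F(s,v) = \gamma(s) + v$, defined on pairs with $v \perp \gamma'(s)$ and $\abs{v}\le r$, is injective. Both quantities are clearly monotone (if a tube of radius $r$ is embedded so is every sub-tube, and if every point within $r$ of $\gamma$ has a unique nearest neighbor then so does every point within $r'<r$), so it suffices to show $\rho \le R$ and $\rho \ge R$. Throughout I use two elementary facts about a $C^1$ curve: the nearest point on $\gamma$ to a given $p$ exists by compactness, and at any such nearest point $\gamma(s_0)$ the function $s \mapsto \abs{p-\gamma(s)}^2$ has vanishing derivative, so $p - \gamma(s_0) \perp \gamma'(s_0)$ (Fermat's condition).

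For the easier inclusion $\rho \le R$, fix $r<\rho$, so the radius-$r$ tube is embedded, and take any $p$ with $\dist(p,\gamma) < r$. A nearest point $\gamma(s_0)$ has $v := p-\gamma(s_0)$ perpendicular to $\gamma'(s_0)$ and $\abs{v} = \dist(p,\gamma) < r$, so $p = F(s_0,v)$ lies in the open radius-$r$ bundle. If $p$ had two distinct nearest points $\gamma(s_1)\ne\gamma(s_2)$, each would furnish a preimage $(s_i, p-\gamma(s_i))$ in that bundle, and these preimages are distinct, contradicting injectivity of $F$. Hence every point within distance $r$ of $\gamma$ has a unique nearest neighbor, so $R \ge r$; letting $r \to \rho$ gives $\rho \le R$.

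For $\rho \ge R$, fix $r<R$ and suppose, for contradiction, that $F(s_1,v_1) = F(s_2,v_2) = p$ with $(s_1,v_1)\ne(s_2,v_2)$ and $\abs{v_i}\le r$. Each $\gamma(s_i)$ is then a perpendicular foot of $p$ at distance $\le r < R$. The crux is to upgrade ``perpendicular foot within the reach'' to ``global nearest point.'' I would do this by a continuation argument along the normal segment $q_t = \gamma(s_i) + t v_i$, $t\in[0,1]$: every $q_t$ satisfies $\dist(q_t,\gamma) \le t\abs{v_i} < R$, so the nearest-point projection $\xi$ is single-valued and continuous on the whole segment; for small $t>0$ the perpendicularity of $v_i$ together with $\gamma\in C^1$ makes $\gamma(s_i)$ a strict local, hence (for $q_t$ close to the curve) global, nearest point, so $\xi(q_t)=\gamma(s_i)$; and the radial structure of the projection for sets of positive reach (Federer's structure theory, \cite{MR22:961}) propagates this identity outward along the segment up to $t=1$, giving $\xi(p) = \gamma(s_i)$ for both $i$. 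Uniqueness of $\xi(p)$ then forces $\gamma(s_1)=\gamma(s_2)$, whence $s_1=s_2$ by embeddedness of $\gamma$ and $v_1=v_2$, a contradiction. Therefore $F$ is injective for every $r<R$, so $\rho \ge R$, and combining the two inclusions yields $\rho = R$, as claimed (matching~\cite{MR99k:57025}).

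The main obstacle is precisely the propagation step in the last paragraph: showing that a perpendicular foot at distance strictly less than the reach is automatically the \emph{globally} nearest point, rather than a spurious non-minimal critical point of the distance-to-$p$ function. I expect this to rest on the fact that within the reach the distance function is $C^1$ with unit gradient and its fibers are exactly the normal segments (so distinct normal disks of radius $<R$ cannot meet); establishing that cleanly for a merely $C^{1,1}$ curve—rather than quoting it—is where the real work lies, and is the natural place to invoke Federer's analysis of sets of positive reach.
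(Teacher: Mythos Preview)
The paper does not prove this proposition at all: it is quoted from~\cite{MR99k:57025} and stated without argument, as background for the definition of ropelength. So there is no ``paper's own proof'' to compare your attempt against.

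On the merits of your attempt: the inequality $\rho \le R$ is clean and correct. For $\rho \ge R$ you have correctly located the only real issue---that a perpendicular foot at distance less than the reach must be the \emph{global} nearest point---and you are right that the continuation-along-the-normal-segment argument is where the content lies. Your sketch is sound in outline but, as you concede, not self-contained: the claim that $\xi(q_t)=\gamma(s_i)$ for small $t$ (the base of the continuation) already needs more than $C^1$ smoothness of $\gamma$ to rule out a nearby competing critical point, and the ``propagation'' step is really the statement that the fibers of the nearest-point retraction within the reach are straight normal segments. Both of these are exactly what Federer proves for sets of positive reach in~\cite{MR22:961} (see his Theorem~4.8), so invoking that result is the honest way to close the gap; trying to reprove it from scratch for $C^{1,1}$ curves would be a significant detour and is not what the cited source~\cite{MR99k:57025} does either.
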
 
This idea of thickness was first proposed by Kr\"otenheerdt and Veit
in 1976~\cite{MR55:9069,MR2197931} and was rediscovered in the 1990's by
Nabutovsky~\cite{MR96d:58025} and Buck and Orloff~\cite{MR95k:58037}.
The thickness can be used to define a scale-invariant quantity
called~\emph{ropelength}:
\begin{definition}
  \label{def:ropelength}
  The \emph{ropelength} of a curve $\gamma$ is defined by
  \begin{equation*}
    \Rop(\gamma) = \frac{\Len(\gamma)}{\Thi(\gamma)},
  \end{equation*}
  where $\Len(\gamma)$ is the length of $\gamma$.  The \emph{minimal
  ropelength} of a knot or link type $L$, $\Rop(L)$, is the minimal
  ropelength of all curves in that knot or link type.
\end{definition}

The knot tightening problem is to find and describe the minimal
ropelength curves in a given knot type. It is known that such curves
exist, but their exact shapes are currently the subject of active
mathematical
research~(c.f.\,\cite{MR2003j:57010,MR2002m:74035,MR2003h:58014}).
Once found (or computed to sufficient accuracy), these configurations
have been used to predict the relative speed of DNA knots under gel
electrophoresis~\cite{MR99b:92032}, the pitch of double helical DNA~\cite{mbms}, the average values of different spatial measurements
of random knots~\cite{MR1981020}, and the breaking
points of knots~\cite{1367-2630-3-1-310}. They also provide a model for the
structure of a class of subatomic particles known as
glueballs~\cite{kepglue}.

\subsection{Another form of the problem}
Let $\gamma \co S^1 \rightarrow \R^3$ now be a $C^2$ parametrized curve,
and define the self-distance function $d \co S^1 \cross S^1
\rightarrow \R$ of $\gamma$ by $d(s,t) := \norm{\gamma(s) -
\gamma(t)}$. As usual, let $\kappa(s)$ denote the curvature of
$\gamma$. We then define the set $\dcsd(\gamma)$ of \emph{doubly-critical
  self-distances} to be the set of critical points of $d$ with $s \neq
  t$. Taking the partial derivatives of $d$, we see that $(s,t) \in
  \dcsd(\gamma)$ if and only if
\begin{equation*}
  \< \gamma(s) - \gamma(t), \gamma\,'(s) \rangle = 0 \text{ and } 
  \< \gamma(s) - \gamma(t), \gamma\,'(t) \rangle = 0.
\end{equation*}

A key idea in~\cite{MR99k:57025} is that for any $\tau <
\Thi(\gamma)$, the surface of the tube of radius $\tau$ around
$\gamma$ has no self-intersections and is $C^2$ smooth. But when $\tau
= \Thi(\gamma)$, the tube is pinched or has a tangential
self-intersection.  This leads to an alternate characterization of
thickness:
\begin{theorem}[\cite{MR99k:57025}]
  \label{thm:lsdr}
  The thickness of $\gamma$ is the minimum of 
 \begin{equation*}
   \min_s \frac{1}{\kappa(s)} \text{ and } \min_{(s,t) \in
   \dcsd(\gamma)} \frac{d(s,t)}{2}.
 \end{equation*}
\end{theorem}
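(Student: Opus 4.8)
The plan is to prove the two inequalities $\Thi(\gamma) \le \rho$ and $\Thi(\gamma) \ge \rho$ separately, where I abbreviate $\rho_{\mathrm{loc}} = \min_s 1/\kappa(s)$, $\rho_{\mathrm{glob}} = \min_{(s,t)\in\dcsd(\gamma)} d(s,t)/2$, and $\rho = \min\{\rho_{\mathrm{loc}},\rho_{\mathrm{glob}}\}$. Throughout I would work with the embedded-tube characterization of the preceding Proposition, writing the tube map as $\Phi(s,\theta,r) = \gamma(s) + r(\cos\theta\,\mathbf{e}_1(s) + \sin\theta\,\mathbf{e}_2(s))$ for a smooth normal frame $\mathbf{e}_1,\mathbf{e}_2$. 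The first observation is that the set of radii $\tau$ for which $\Phi|_{\{r\le\tau\}}$ is an embedding is an interval $[0,\Thi(\gamma))$: restricting an injective map to a smaller solid tube keeps it injective, so if radius $\tau$ embeds then every smaller radius embeds. Hence it suffices to show that the radius-$\tau$ tube embeds exactly when $\tau < \rho$.

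For the upper bound $\Thi \le \rho$ I would produce, for each $\tau > \rho$, a failure of injectivity of $\Phi$ at radius $\tau$. The Jacobian of $\Phi$ is proportional to $1 - r\kappa(s)\cos\theta$, which first vanishes at $r = 1/\kappa(s)$, $\theta = 0$; thus at a point where $1/\kappa(s) < \tau$ the map ceases to be an immersion and folds, giving two preimages of a nearby point, so $\Thi \le 1/\kappa(s)$ and, taking the infimum over $s$, $\Thi \le \rho_{\mathrm{loc}}$. For the global term, fix a doubly-critical pair $(s,t)$ and set $w = \gamma(t) - \gamma(s)$. The conditions $\langle w,\gamma'(s)\rangle = 0$ and $\langle w,\gamma'(t)\rangle = 0$ say exactly that $w$ lies in the normal plane at $s$ and that $-w$ lies in the normal plane at $t$, so for $\tau > d(s,t)/2 = \tfrac12\norm{w}$ the midpoint $m = \tfrac12(\gamma(s)+\gamma(t))$ lies in both normal disks of radius $\tau$; that is, $\Phi(s,\tfrac{w}{2}) = \Phi(t,-\tfrac{w}{2}) = m$ with $s \neq t$, so $\Phi$ is not injective. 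Therefore $\Thi \le d(s,t)/2$ for every doubly-critical pair, and hence $\Thi \le \rho_{\mathrm{glob}}$.

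For the lower bound $\Thi \ge \rho$ I would fix $\tau < \rho$ and show the radius-$\tau$ tube embeds. Since $\tau < \rho_{\mathrm{loc}}$, the Jacobian $1 - r\kappa(s)\cos\theta$ is bounded below by $1 - \tau\kappa(s) > 0$ on $\{r \le \tau\}$, so $\Phi$ is an immersion there; combined with the fact that $\gamma$ is an embedded loop and that $S^1$ is compact, this yields a uniform $\delta > 0$ for which $\Phi$ is injective on every local piece $\{\abs{s-t} < \delta\}$. Any remaining collision $\Phi(s,v_s) = \Phi(t,v_t)$ must then have $\abs{s-t} \ge \delta$ and forces $d(s,t) = \norm{v_t - v_s} \le 2\tau$. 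I would minimize $d$ over the collision set, which is compact and bounded away from the diagonal; at an interior minimizer the stationarity conditions in $s$ and $t$ are precisely $\langle \gamma(s)-\gamma(t),\gamma'(s)\rangle = 0 = \langle \gamma(s)-\gamma(t),\gamma'(t)\rangle$, so $(s,t) \in \dcsd(\gamma)$ and $d(s,t) \ge 2\rho_{\mathrm{glob}} > 2\tau \ge d(s,t)$, a contradiction. Thus no collision exists, the tube embeds, and letting $\tau \uparrow \rho$ gives $\Thi \ge \rho$.

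The main obstacle is the global step of the lower bound: I must guarantee that the collision set is genuinely bounded away from the diagonal so that the minimizer of $d$ is an honest pair $s \neq t$ satisfying the \emph{interior} doubly-critical equations, rather than a degenerate near-diagonal configuration where curvature, not a transverse self-contact, is responsible. This is exactly where the hypothesis $\tau < \rho_{\mathrm{loc}}$ must be deployed to control the interaction between the local and global failure modes, and where the uniform-$\delta$ argument from the immersion property and compactness of $S^1 \times S^1$ has to be made rigorous. Once that separation is established, both one-sided estimates follow routinely from the explicit Jacobian of $\Phi$ and the interval structure of the embedding radii.
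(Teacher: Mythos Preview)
The paper does not prove this theorem; it is quoted from the literature (the citation \cite{MR99k:57025}, which is Litherland--Simon--Durumeric--Rawdon) and immediately followed only by the illustrative Figure~\ref{fig:thick}. So there is no proof here to compare against.

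That said, your sketch is the standard route and is essentially sound. Two places deserve more care. First, for $\Thi \le \rho_{\mathrm{loc}}$, saying the map ``folds, giving two preimages of a nearby point'' when the Jacobian vanishes is heuristic; the clean argument is that the center of the osculating circle at $s_0$ lies at distance $1/\kappa(s_0)$ from $\gamma(s_0)$ in the normal disk, and a second-order expansion shows nearby points $\gamma(s)$ are also within $1/\kappa(s_0)+o(1)$ of that center along \emph{their} normals, producing an explicit double point of $\Phi$ for any $\tau>1/\kappa(s_0)$. Second, and you already flag this, the lower bound hinges on ruling out that the minimizer of $d$ over $\{|s-t|\ge\delta\}$ sits on the boundary $|s-t|=\delta$. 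The usual fix is a Schur-type estimate: since $\kappa\le 1/\rho_{\mathrm{loc}}$, one has $d(s,t)\ge 2\rho_{\mathrm{loc}}\sin\bigl(|s-t|/(2\rho_{\mathrm{loc}})\bigr)$ for $|s-t|\le \pi\rho_{\mathrm{loc}}$, and in particular $d$ is strictly increasing in $|s-t|$ near the diagonal. Choosing $\delta$ in this monotone range forces any boundary minimizer to have $\nabla d$ pointing into the region $\{|s-t|>\delta\}$, which is incompatible with minimality unless the tangential components of $\nabla d$ also vanish---i.e., unless $(s^*,t^*)$ is already doubly critical. Once you make that step precise, the argument closes.
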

\noindent Figure~\ref{fig:thick} shows curves where the first and second of these terms control the thickness.
\begin{figure}
  \includegraphics{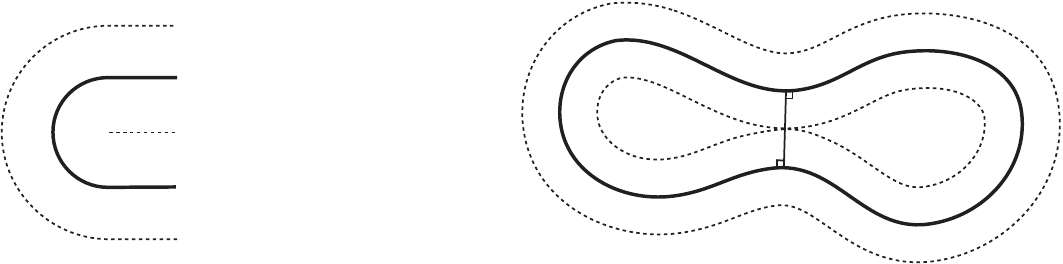}
  \caption{The thickness of a smooth curve $\gamma$ is controlled by
    curvature (as in the left picture), and the length of chords in
    $\dcsd(\gamma)$ (as in the right picture).}
  \label{fig:thick}
\end{figure}

Since length and thickness scale together, minimizing ropelength is
the same as minimizing length over the set of curves with thickness at
least one. Since thickness is a min-function, the condition
$\Thi(\gamma) \geq 1$ can be viewed as an infinite family of
inequality constraints on~$\gamma$. These constraints are active at
places where the tube around~$\gamma$ forms kinks (where $1/\kappa$ is in control of the minimum in Theorem~\ref{thm:lsdr}) or has self-contacts (where the self-distance~$\nicefrac{d(s,t)}{2}$ is in control of the minimum).

\subsection{Numerical approaches to the knot-tightening problem}
Previous authors have defined discretized versions of thickness
for polygons or spline curves and viewed the
problem as one of minimizing the nonsmooth quotient of length and thickness. The advantage of this approach is that it is a very simple and robust way to obtain approximately ropelength-minimizing curves. The disadvantage is that it is very difficult to take advantage of the
fact that thickness (as given in Thm~\ref{thm:lsdr}) is a min-function.

Our approach is to
define a discrete version of thickness as a min-function and think of
the problem as one of minimizing a differentiable function $\Len(\V)$
subject to a family of differentiable constraints $\PThi(\V) \geq 1$.
While our approach will not quite fit into the standard framework of
constrained optimization (our family of constraints is infinite), we
will be able to define a version of constrained gradient descent which
minimizes polygonal ropelength effectively.

\subsection{Theoretical framework} 
For an equilateral space polygon $\V$ we first prove that our function
$\PThi(\V)$ can be written as a min over a fixed compact family of
differential functions. From here we use Clark's theorem to show that
$\PThi$ has a one-sided derivative in the direction of any variation
$W$ of $\V$. For a polygon with $\PThi(\V) = 1$ we use these
derivatives to define a cone of infinitesimal variations $\IM(\V)$ which
do not decrease $\PThi$ to first order and the dual cone of ``resolvable''
variations $\RM(\V)$. Our next main theorem is that $\RM(\V)$ is a
finitely generated polyhedral cone whose generators are the gradients
of the lengths of certain chords of the polygon (called
\emph{struts}) and of a function of certain turning angles of the
polygon (called \emph{kinks}). We give explicit formulae for these
gradients in terms of the vertex positions. We then compute the
gradient of~$\Len(\V)$ and define the constrained gradient of length
to be the projection of~$\Len(\V)$ onto the polyhedral cone~$\IM(\V)$. At this point we give the expected result that a polygon is
critical for polygonal ropelength if and only if the constrained
gradient of length is zero. Equivalently, a polygon is critical for
polygonal ropelength if there is a set of positive Lagrange
multipliers on the struts and kinks which combine to equal the
negative of the length gradient. The theory section ends with a
discussion of how to compute the constrained gradient numerically.

\subsection{Numerical methods} 
Sections~\ref{sec:bridge} and~\ref{sec:programdesign} describe the design of our polygonal ropelength minimizing software. Our algorithm essentially consists of computing the
constrained gradient of length and taking small steps in this
direction until the constrained gradient is sufficiently
small. However, the details of the process are not quite so
simple. Since the constraint functions are nonlinear, even steps
that are in the direction of the constrained gradient violate some
constraints to second order. Further, newly active constraints are
discovered throughout the run as previously distant sections of tube
come into contact with one another. As a result, we must choose
stepsizes carefully and correct errors periodically. It is also
important to run efficiently, as the size of our problem (about one
thousand variables and a similar number of active constraints) is
fairly large. We have solved these technical and engineering problems and used our software to minimize all prime knots with ten or fewer crossings and all prime links with nine or fewer crossings, for a total of \FileCount\  different knot and link types. We intend to address the ropelength of composite knots and links in a future publication.

\newcommand{\FourOneImpr}{-}
\FPeval{\FourOneImpr}{100.0*((\FourOnePrev - \FourOneRRUB)/(\FourOnePrev))}
\FPround{\FourOneImpr}{\FourOneImpr}{2}

\newcommand{\NineTwentyImpr}{-}
\FPeval{\NineTwentyImpr}{100.0*((\NineTwentyPrev - \NineTwentyRRUB)/(\NineTwentyPrev))}
\FPround{\NineTwentyImpr}{\NineTwentyImpr}{2}

\FPeval{sixthreetworesult}{0.0017}
\FPeval{simplechainresult}{100*((\SimpleChainRRUB - 41.6991)/(41.6991))}
\FPround{\simplechainresult}{\simplechainresult}{2} 
\subsection{New ropelength bounds} 
We check our figures against previous computations of the minimum
ropelength of knots and links and against some of the few known
theoretical results for the lengths of tight links. Our results
improve on all previously published computational results except for the
trefoil knot. For example, we improve the best known upper bound for
the ropelength of the well-studied figure-eight knot $4_1$ by
$\FourOneImpr$ to $\FourOneRRUB$ (as compared to the bound
of~\cite{\FourOnePrevCite}) and improve the best known upper bound for
the ropelength of the $9_{20}$ knot by $\NineTwentyImpr\%$ to
$\NineTwentyRRUB$ (compared to the bound
of~\cite{\NineTwentyPrevCite}). To get a sense of the difference between the configurations produced by our method and the configurations produced by the simulated annealer of~\cite{\NineTwentyPrevCite} we show both configurations in Figure~\ref{NineTwentyFig}. For links, our figures are the first
computational results to appear in print, but compare well to known theoretical results.  For example, the upper bound provided by our computation of the Borromean rings link $6^3_2$ is $\SixThreeTwoRRUB$ --- within
$\sixthreetworesult\%$ of the exact value around $58.0060$ suggested
by~\cite{MR2284052}, while our computation of the tight shape of the
``simple chain'' link is $\SimpleChainRRUB$ --- within
$\simplechainresult\%$ of the correct value of of $6 \pi +
2$~\cite{MR2003h:58014}. 

We also compared our results to those of Gilbert~\cite{gilbert}, which are unpublished but available on Bar-Natan's \emph{Knot Atlas} wiki. Gilbert provides Fourier cofficients and instructions for reconstructing the vertices of his configurations from this data. We followed his instructions, but our software did not verify his claimed ropelength numbers\footnote{Our measurement of curvature by $\MinRad$ is sensitive to edgelength and seems to come out much larger than his ropelengths would indicate. This is probably a discretization effect, and it is certainly very possible that the Fourier knots defined by Gilbert's data have ropelengths corresponding to Gilbert's claimed numbers.}. According to our measurement of the ropelength of Gilbert's configurations, our knots are tighter in all cases but $2^2_1$ by an average of $\AverageWin\%$, with some outliers, such as our $9^2_{37}$ link, which is $71\%$ shorter. If we compare our results to Gilbert's claimed ropelengths, our knots and links are tighter in $\WonClaimCases$ cases and less tight in $\LostClaimCases$. Overall, our knots and links are (on average) $\AverageClaimWin\%$ tighter than the bounds claimed by Gilbert with our $9^2_{28}$ link about $4\%$ shorter than Gilbert's claim.

\begin{figure}
\includegraphics[width=2.2in]{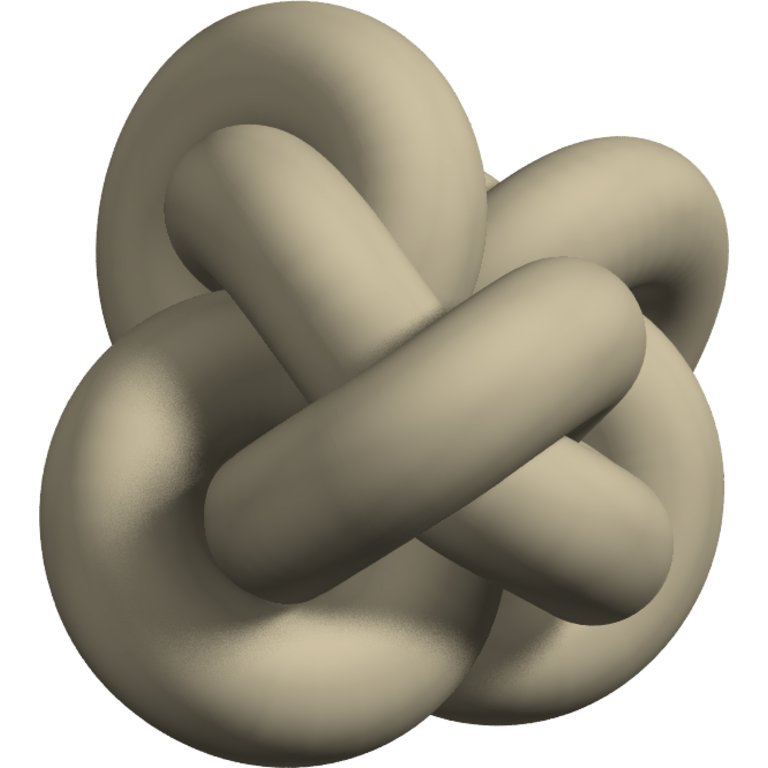} \hspace{0.4in}
\includegraphics[width=3in,trim = 0 1.0in 0 0]{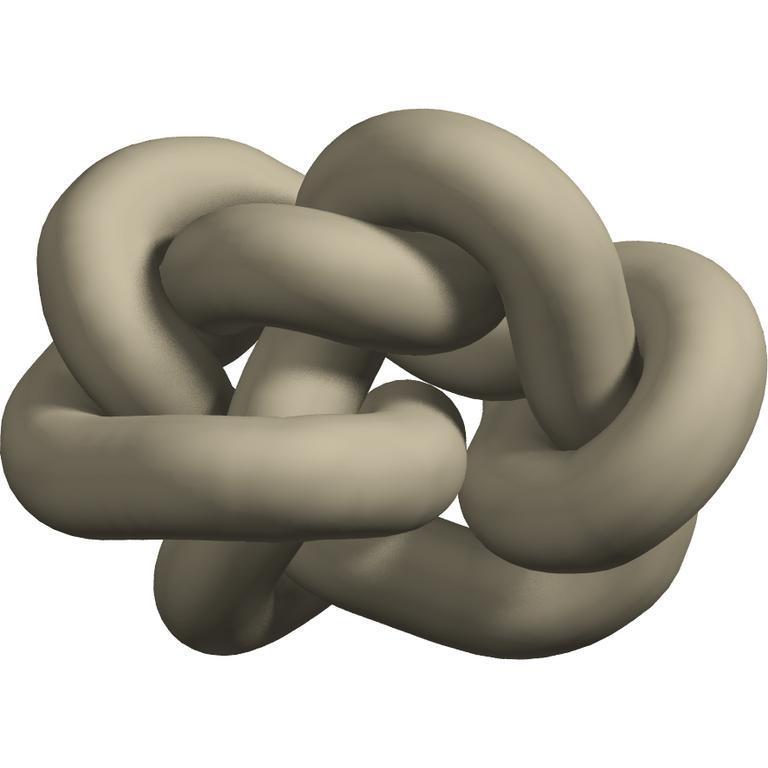}
\caption{These two images of the $9_{20}$ knot show the tightest configurations obtained by our algorithm (left) and by the TOROS algorithm described in~\cite{\NineTwentyPrevCite} (right). It is clear that our algorithm performs better once there are many self-contacts in the knot. In fact, the ropelength of the left-hand configuration is bounded by $\NineTwentyRRUB$, while the configuration on the right has ropelength bounded by $\NineTwentyPrev$.}
\label{NineTwentyFig}
\end{figure}

\subsection{Self-contact maps}
Two sets of authors (von der Mosel et\,al.\,\cite{MR2033143} and
Cantarella et\,al.\,\cite{MR2284052}) have given versions of a ropelength
criticality criterion for knots without kinks which state roughly that
a knot $\gamma$ is ropelength-critical when the elastic force given by
the gradient of the length of the curve is balanced by a system of
Lagrange multipliers on the self-contacts of the tube around
$\gamma$. The latter authors used their condition to derive a
ropelength critical configuration of the Borromean rings
and a surprising ropelength critical configuration of a clasp formed by two tubes stretched across each other.

In each of these examples, the most difficult part of the result was
the deduction of the structure of the set of self-contacts for the
tight configuration. Since these contact maps are very sensitive to
small perturbations of the centerline, it has been difficult to
resolve them using previous numerical methods\footnote{The notable exception
to this rule has been the ``biarc'' spline-annealing method
of~\cite{mglob}, which has produced well-resolved contact maps for
the $3_1$ and $4_1$ knots.}. These contacts and the system of
Lagrange multipliers on them are explicitly computed by our algorithm,
allowing us to give medium-quality contact maps for a large number of
knots and links. The contact maps offer some support for the
hypothesis that a relatively small number of structures may reappear
often in tight knots and links.

\subsection{Previous work}
This is not the first time gradient-like methods have been attempted
for the knot-tightening problem. Our work has been inspired by Piotr
Pieranski's SONO algorithm~\cite{MR1702021}, which follows a version
of the length gradient, but does not include an explicit resolution of
this vector against the active constraints.  Our thinking is also
informed by John Sullivan's ``energy-ropelength method''~\cite{MR1953340}, which
optimizes thickness instead of length, estimating the maximum diameter
of a uniform embedded tube around the core curve by an $L^p$ average
of the radii of embedded cross-sectional disks and minimizing the
resulting smooth functional using the conjugate-gradient
implementation in Brakke's \texttt{evolver}~\cite{MR1203871}.

\section{A discretization for the ropelength problem}
\subsection{Polygonal thickness}
Consider a closed space polygon $\V$ with vertices $v_1, \dots, v_{V}$
and edges $e_1, \dots, e_{V}$.  We will think of $\V$ as the vector
$(v_1,\dots,v_{V})$ in $(\R^3)^V = \R^{3V}$, and assume that all subscripts on vertices and edges are taken mod $V$. The unit tangent vector~$T_i$ to each edge of a polygon is well-defined on the interior of the
edge. At the vertex $v_i$ joining edges $e_{i-1}$ and $e_i$, there are
two tangent vectors $T_{i-1}$ and $T_i$. The curvature of $\V$ at
$v_i$ is usually thought of as a delta function whose mass is given by
the turning angle $\theta_i$ from $T_{i-1}$ to $T_i$. We will use a
somewhat different definition of curvature for polygons:
\begin{definition}
  \label{def:minrad}
  The minimum radius of curvature (or $\MinRad$) of $\V$ at
  $v_i$ is given by the radius of the unique circle that is tangent to the two edges
  meeting at $v_i$ and that touches the midpoint of the shorter one.
\end{definition}

Rawdon has shown \cite{rawdonthesis} that if $\theta_i$ is the turning angle of
$\V$ at $v_i$, then we can give $\MinRad(v_i)$ (and define
$\MinRad^\pm(v_i)$) by the expressions:
\begin{equation}
  \label{eq:minrad}
  \frac{\min\{\abs{e_{i-1}},\abs{e_i}\}}{2\,\tan(\nicefrac{\theta_i}{2})} =
  \min\left\{ \frac{\abs{e_{i-1}}}{2\,\tan(\nicefrac{\theta_i}{2})},
  \frac{\abs{e_{i}}}{2\,\tan(\nicefrac{\theta_i}{2})} \right\} = \min \{
    \MinRad^-(v_i), \MinRad^+(v_i) \}.
\end{equation}
It is clear that while $\MinRad v_i$ is not neccesarily a
differentiable function, the two functions $\MinRad^{\pm} v_i$ are
differentiable when they are defined.  The motivation for this
definition is that we can round off all the corners of $\V$ by
splicing in these circle arcs, generating a $C^{1,1}$ curve with radii of
curvature equal to the $\MinRad(v_i)$. We could have defined
$\PThi(\V)$ to be the thickness of this curve. It turns out, however,
that there is no closed form computation for that number (though it
can be computed approximately, as we will see in
Section~\ref{sec:RRUB}).

We now define a set corresponding to $\dcsd$ for polygons:
\begin{definition}
  Let $\dcsd(\V)$ be the set of $(p,q)$ on $\V$ with $p \neq q$ which
  are local minima of the self-distance function on $\V$.
\end{definition}
There are several possible cases for $(p,q)$ in $\dcsd(\V)$,
since the polygon might have a vertex at one or both of the endpoints
of the chord. These are shown in Figure~\ref{fig:pdcsd}.
\begin{figure}[ht]
  \includegraphics{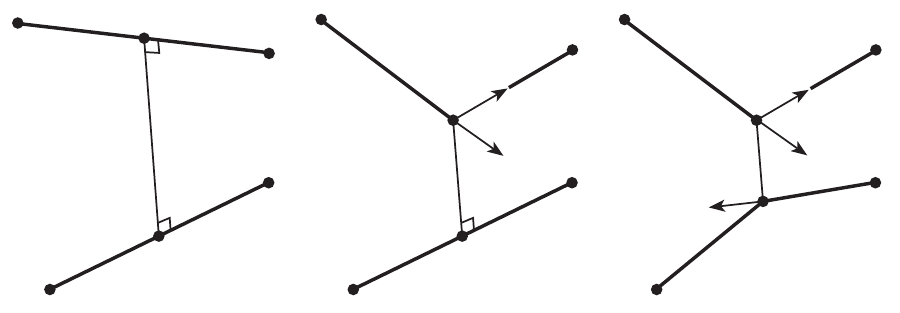}
  \caption[Classifying pairs in $\dcsd(\V)$] {We see three types of
    local minima of the self-distance function on a space polygon $\V$ in the three-dimensional drawings above. From
    left to right, these are an \emph{edge-edge} pair, a
    \emph{vertex-edge} pair, and a \emph{vertex-vertex} pair.}
  \label{fig:pdcsd}
\end{figure}

We can then define Rawdon's polygonal thickness:

\begin{definition}
  \label{def:polythi}
  The polygonal thickness $\PThi(\V)$ of a space polygon $\V$ without
  self-intersections is given by the minimum of
  \begin{equation*}
    \PThi(\V) := \min \left\{ \min_i \MinRad(v_i), \min_{(p,q) \in
      \dcsd(\V)} \frac{d(p,q)}{2} \right\}.
  \end{equation*}
\end{definition}
We have carefully constructed this definition so that when polygons~$\V_n$ with increasing numbers of
edges are inscribed in a space curve~$\gamma$ under some mild geometric hypotheses, $\PThi(\V_n)
\rightarrow \Thi(\gamma)$~\cite{rawdonthesis,MR1702029,MR2034393}.

\subsection{The problem with $\PThi$}
Definition~\ref{def:polythi} allows us to define the set of polygons
with $\PThi(\V) \geq 1$ as the polygons obeying a family of
constraints in the form $\MinRad(v_i) \geq 1$ and $d(p,q) \geq 2$ for
$(p,q) \in \dcsd(\V)$. This is almost the standard form for constrained
optimization problems:
\begin{equation}
  \label{eq:nlo}
  \min_{\V \in \R^{3V}} f(\V) \text{ subject to }
  g_i(\V) \geq 0,
\end{equation}
where $f$ and the $g_i$ are differentiable. The problem is that the
set of constraint functions $d(p,q)$ for $(p,q) \in \dcsd(\V)$ depends on the polygon. We will
need a common set of constraint functions for all polygons in a
neighborhood of a solution. 

\subsection{Constraint thickness} 
To solve this problem, we will define a new thickness measure for
polygons called \emph{constraint thickness} which is given
in the form above. We will then prove that for equilateral polygons, the new constraint
thickness defines the same set of polygons as the old polygonal
thickness.

We first define a subset of the pairs of points on a polygon
\begin{definition} 
  \label{def:vb}
  For a given positive $\tau$ and $\ell$, let $\theta(\tau,\ell)$ be
  the turning angle of a pair of edges of length $\ell$ with $\MinRad =
  \tau$. We set
  \begin{equation*}
    \VB(\tau,\ell) = \left\{ (p,q) \in \V \cross \V : \vb(p,q) \geq
    \frac{\pi}{\theta(\tau,\ell)} \right\},
  \end{equation*}
  where $\vb(p,q)$ is the smaller number of vertices between
  points $p$ and $q$ (counting $p$ and/or $q$ if they are vertices) if
  they are on the same connected component of $\V$ and
  $\infty$ otherwise. 
\end{definition}
We note that an easy computation shows that $\theta(\tau,\ell) = 2
\arctan(\nicefrac{\ell}{2 \tau})$.  We can now define our new thickness measure
\begin{definition}
  \label{def:CThi} 
  The ($\tau$,$\ell$)-constraint thickness $\CThi(\tau,\ell,\V)$ of a
  polygon $\V$ is given by
  \begin{equation}
    \CThi(\tau,\ell,\V) = \min \left\{ \min \frac{\MinRad(v_i)}{\tau}, 
      \min_{(p,q) \in \VB(\tau,\ell)} \frac{d(p,q)}{2} \right\}.
  \end{equation}
\end{definition}
We note that $\V$ need not be equilateral or have edgelength $\ell$ to define the constraint thickness to defined the constraint thickness of $\V$. We can view $\tau$ as the ``stiffness'' of the rope~(c.f. the definition of $\lambda$-thickness in~\cite{cfksw2} and~\cite{buckrawdonstiffness}), as it provides a lower bound on the radius of curvature of a tube of unit radius. Though our theory (and our code) should work for any $\tau \geq 1$, we have not experimented with values for $\tau$ other than~$1$ and so will write the $(1,\ell)$-constraint thickness $\CThi(1,\ell,\V)$ as $\CThi(\ell,\V)$.

We can now prove that $\CThi(\ell,\V)$ is an equivalent thickness to~$\PThi$ for equilateral polygons of edgelength $\ell$.  
\begin{theorem}
  \label{thm:vb}
  If $\V$ is an equilateral polygon of edgelength~$\ell$, 
  $\PThi(\V) \geq 1 \iff \CThi(\ell,\V) \geq 1$.
\end{theorem}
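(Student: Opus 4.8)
The two measures share their curvature term: for $\tau=1$ the quantity $\min_i \MinRad(v_i)/\tau$ appearing in $\CThi$ is literally the term $\min_i\MinRad(v_i)$ appearing in $\PThi$. So the plan is to freeze the curvature term and compare only the self-distance terms. Concretely, if some $\MinRad(v_i)<1$ then both $\PThi(\V)<1$ and $\CThi(\ell,\V)<1$, and the biconditional holds trivially; hence I may assume throughout that $\MinRad(v_i)\ge 1$ for every $i$. For an \emph{equilateral} polygon of edgelength $\ell$ this standing assumption is, by~\eqref{eq:minrad}, exactly the statement that every turning angle satisfies $\theta_i\le 2\arctan(\nicefrac{\ell}{2})=\theta(1,\ell)$. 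Under this assumption the theorem reduces to the single claim that $d(p,q)\ge 2$ holds for all $(p,q)\in\dcsd(\V)$ if and only if it holds for all $(p,q)\in\VB(1,\ell)$. The geometric meaning of the threshold is that $\pi/\theta(1,\ell)$ is the least number of vertices an equilateral arc of turning at most $\theta(1,\ell)$ per vertex needs in order to accumulate a total turning of $\pi$.

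The key structural step is the containment $\dcsd(\V)\subseteq\VB(1,\ell)$. To prove it I take a pair $(p,q)$ with $\vb(p,q)<\pi/\theta(1,\ell)$ and show it cannot be a local minimum of self-distance. The short arc $A$ joining $p$ to $q$ then has fewer than $\pi/\theta(1,\ell)$ interior vertices, so its total turning $\Theta$ is strictly less than $\pi$; hence all of its edge-direction vectors lie within angle $\Theta/2<\pi/2$ of the ``middle'' direction $m$, and therefore inside an open hemisphere. Consequently the partial sums of the edges are strictly monotone in the $m$-direction, which forces $\langle p-q,\,T_p\rangle\neq 0$ (moving $p$ inward along $A$ strictly changes $d$). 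This contradicts the doubly-critical condition, so $(p,q)\notin\dcsd(\V)$. This is the discrete analogue of the classical fact that an arc of total curvature below $\pi$ has chord length monotone along it and admits no self-contact. Granting the containment, the implication $\CThi(\ell,\V)\ge1\Rightarrow\PThi(\V)\ge1$ is immediate: if every pair in the larger set $\VB(1,\ell)$ has $d\ge2$, then so does every pair in the subset $\dcsd(\V)$, and the curvature terms already match.

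For the reverse implication $\PThi(\V)\ge1\Rightarrow\CThi(\ell,\V)\ge1$ I argue by contradiction. Assuming $\CThi(\ell,\V)<1$, the continuous function $d$ attains a minimum value $<2$ on the compact set $\VB(1,\ell)$ at some pair $(p^\ast,q^\ast)$. If $(p^\ast,q^\ast)$ lies in the combinatorial interior of $\VB(1,\ell)$ it is an unconstrained critical point of $d$, hence a local minimum, hence a member of $\dcsd(\V)$; then $\PThi(\V)\ge1$ forces $d(p^\ast,q^\ast)\ge2$, a contradiction. The remaining, and genuinely harder, case is when the minimizer sits on the combinatorial boundary of $\VB(1,\ell)$, where $\vb(p^\ast,q^\ast)$ equals the threshold and the minimum is pinned by the constraint rather than by criticality. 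Here I cannot route through $\dcsd(\V)$; instead I need a quantitative estimate showing the chord is already long enough by itself.

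The main obstacle is precisely this boundary estimate: I must show that an equilateral arc with at least $\pi/\theta(1,\ell)$ interior vertices, each turning by at most $\theta(1,\ell)$, has endpoint distance at least $2$. This is a discrete Schur/Fenchel-type comparison. The plan is to show that, for a fixed number of vertices, the endpoint distance is smallest when the turning is driven to the cap at every vertex --- the regular polygonal arc tracking a circle of radius $\MinRad=1$ --- and then to check that this extremal arc, once its total turning reaches $\pi$ (a polygonal semicircle of unit radius), has chord $\sqrt{4+\ell^2}\ge2$, while straighter or longer arcs only lengthen the chord. Making the monotonicity ``more turning at capped vertices shortens the chord'' rigorous in the possibly non-planar polygonal setting, and correctly isolating the sub-case in which the minimizing arc turns through more than $\pi$ (which instead manufactures a genuine contact and thereby reduces to the interior case), is the technical heart of the argument. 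Equilaterality is used essentially here, since it is what converts the curvature bound $\MinRad\ge1$ into the uniform per-vertex turning cap that drives both the hemisphere argument of the second paragraph and the extremal comparison of this one.
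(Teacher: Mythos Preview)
Your overall architecture is exactly the paper's: reduce to the case $\MinRad(v_i)\ge 1$, prove the containment $\dcsd(\V)\subset\VB(1,\ell)$, deduce $\CThi\ge1\Rightarrow\PThi\ge1$ immediately, and for the converse split the minimizer over $\VB$ into an interior case (which lands in $\dcsd$) and a boundary case handled by a Schur-type comparison. So the strategy is sound.

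The genuine gap is in your proof of the containment. From ``all edge directions lie within $\Theta/2$ of $m$'' you correctly get that the arc is strictly monotone in the $m$-direction, and you can even show that the chord direction $(q-p)/|q-p|$ lies in the same $\Theta/2$-cap about $m$. But this only bounds the angle between $T_p$ and $q-p$ by $\Theta$, and for $\pi/2\le\Theta<\pi$ that angle can be exactly $\pi/2$; monotonicity in $m$ simply does not force $\langle p-q,T_p\rangle\neq0$. The paper closes this gap with Fenchel's theorem applied to the closed polygon formed by the arc together with the chord $pq$: total curvature is at least $2\pi$, the arc contributes less than $\pi$, so the two exterior angles at $p$ and $q$ sum to more than $\pi$. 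Hence one of them exceeds $\pi/2$, and moving that endpoint along the arc strictly decreases $d$, contradicting local minimality. This is the missing idea you need in place of the hemisphere step.

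For the boundary estimate, your instinct to use a Schur comparison to the planar equilateral arc with every turning angle equal to $\theta(1,\ell)$ is exactly what the paper does; Schur's theorem gives the reduction to that extremal arc directly, so you do not need to argue monotonicity in the non-planar setting yourself. The paper then finishes with a clean inscribed-circle observation: the extremal arc circumscribes a unit circle, and since $(n-1)\theta(1,\ell)<\pi\le n\theta(1,\ell)$ the far endpoint $q'$ has the largest $y$-coordinate on the extended arc, hence $y$-coordinate at least $2$, giving $d(p',q')\ge2$. Your $\sqrt{4+\ell^2}$ heuristic and the ``more than $\pi$'' sub-case are unnecessary once you set it up this way.
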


To prove the theorem we will need a lemma (c.f. Lemma~13 of~\cite{MR2001a:57017}):

\begin{lemma} 
  \label{lem:dcsd vs vb}
  If $\V$ is an equilateral polygon of edgelength $\ell$ and $\MinRad
  \geq \tau$, then $\dcsd(\V) \subset \VB(\tau,\ell)$.
\end{lemma}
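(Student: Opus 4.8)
The plan is to reduce the set inclusion $\dcsd(\V) \subset \VB(\tau,\ell)$ to a single geometric estimate: if $(p,q)$ is a local minimum of the self-distance function, then the total turning (total curvature) of the shorter polygonal arc joining $p$ to $q$ is at least $\pi$. Granting this estimate, the lemma follows by counting. Since $\V$ is equilateral with $\MinRad \geq \tau$, the identity~\eqref{eq:minrad} shows that every turning angle satisfies $\theta_i \leq \theta(\tau,\ell) = 2\arctan(\nicefrac{\ell}{2\tau})$. If $N$ denotes the number of vertices of $\V$ lying strictly interior to the shorter arc, then its total turning is at most $N\,\theta(\tau,\ell)$; since the estimate makes this at least $\pi$, we obtain $N \geq \pi/\theta(\tau,\ell)$. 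Because $\vb(p,q)$ counts exactly these interior vertices (together with $p$ and/or $q$ when they happen to be vertices), this yields $\vb(p,q) \geq \pi/\theta(\tau,\ell)$, which is the defining condition for membership in $\VB(\tau,\ell)$.

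To prove the turning estimate I would set $c = q - p$, $u = c/\norm{c}$, let $A$ be the shorter arc from $p$ to $q$, and close it into a polygon $\hat A$ by appending the straight chord $S$ running from $q$ back to $p$ (direction $-u$). The engine is Fenchel's theorem for closed polygons: the total curvature of $\hat A$ is at least $2\pi$. Since $S$ is straight, this total curvature equals the total turning along $A$ plus the two turning angles at the junctions where $A$ meets $S$ at $p$ and at $q$. The entire point is therefore to bound each junction angle by $\pi/2$, so that $2\pi \leq (\text{turning of } A) + \pi/2 + \pi/2$ forces the turning of $A$ to be at least $\pi$.

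The junction bounds are where the first-order minimality of $(p,q)$ enters, and this is the step needing the most care because of the three cases in Figure~\ref{fig:pdcsd}. At $p$, local minimality of $d(\cdot,q)$ along $\V$ means every one-sided derivative is nonnegative; moving $p$ in an outgoing edge direction $e$ changes $\tfrac12 d^2$ by $-\langle c, e\rangle$, so $\langle u, e\rangle \leq 0$ for each outgoing direction, in particular for the forward tangent $a$ of $A$ at $p$. The junction angle at $p$ is the angle between $-u$ and $a$, whose cosine is $-\langle u,a\rangle \geq 0$, hence that angle is $\leq \pi/2$; the symmetric computation at $q$, using the inward arc tangent $b$ and the outgoing direction $-b$, gives $\langle u,b\rangle \leq 0$ and the same bound. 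When $p$ or $q$ lies interior to an edge the inequality sharpens to the equality $\langle u,a\rangle = 0$, recovering the classical perpendicularity of a doubly-critical chord, but it is precisely the one-sided inequality that lets the vertex-edge and vertex-vertex cases go through uniformly. I expect the main obstacle to be essentially bookkeeping: verifying that the forward arc tangent really is an admissible one-sided variation direction in each of the three cases, and pinning down the orientation conventions at $q$ so that the correct sign $\langle u,b\rangle \leq 0$ appears. A final minor point is to observe that a critical arc cannot be straight (a straight $A$ would force $a = u$, contradicting $\langle u,a\rangle \leq 0$), so $\hat A$ is nondegenerate and Fenchel's theorem genuinely applies.
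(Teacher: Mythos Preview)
Your proof is correct and follows essentially the same approach as the paper: close the shorter arc with the chord, apply Fenchel's theorem to get total curvature $\geq 2\pi$, and use first-order minimality of $(p,q)$ to bound each junction angle by $\pi/2$, forcing the arc's total turning to be at least $\pi$ and hence $\vb(p,q)\geq \pi/\theta(\tau,\ell)$. The paper phrases the junction step contrapositively (if the turning were $<\pi$ then one junction angle would exceed $\pi/2$, allowing $d(p,q)$ to decrease), whereas you argue the inequality $\langle u,a\rangle\leq 0$ directly and handle the edge/vertex cases uniformly via one-sided derivatives, but the content is the same.
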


\begin{proof}
  The proof has two parts --- in the first, we show that the shorter arc
  between any $(p,q) \not\in \VB(\tau,\ell)$ has total curvature~$t$ less than
  $\pi$, while in the second we will show that any pair joined by such
  an arc cannot be in $\dcsd(\V)$. So suppose that $t \geq \pi$. We
  will prove that $(p,q) \in \VB(\tau,\ell)$.

  Since $\MinRad(\V) \geq \tau$, we know that each turning angle of
  $\V$ is less than $\theta(\tau,\ell)$. If the total curvature of
  the arc joining $p$ and $q$ is at least $\pi$, then $\vb(p,q) \cdot
  \theta(\tau,\ell) \geq \pi$, so
  \begin{equation}
    \vb(p,q) \geq \frac{\pi}{\theta(\tau,\ell)} 
  \end{equation}
  and $(p,q) \in \VB(\tau,\ell)$, proving the claim.

  Now suppose that $(p,q) \in \dcsd(\V)$. We claim that the total
  curvature $t$ of each arc joining $p$ and $q$ is at least $\pi$, and
  hence that $(p,q) \in \VB(\tau,\ell)$. Suppose not. The arc of $\V$
  joining $p$ and $q$ together with the chord from $p$ to $q$ form a
  closed space polygon $\mathcal{V'}$. The total curvature of this
  polygon is equal to $t$ plus the turning angles at $p$ and $q$. 
  By Fenchel's Theorem~\cite{MR0394451}, that total curvature is at least $2\pi$.  So the
     angle at $p$ and the angle at $q$ must sum to more than $\pi$.  Thus
     either the angle at $p$ or the angle at $q$ must exceed
     $\nicefrac{\pi}{2}$.  But in that case, we could reduce $d(p,q)$ to first order by
     moving $p$ or $q$ along an edge from the arc which connects $p$ and $q$, contradicting our assumption that $(p,q) \in \dcsd(\V)$.
 \end{proof}

We are now ready to prove Theorem~\ref{thm:vb}:

\begin{proof}
  Suppose that $\CThi(\ell,\V) \geq 1$. This implies that $\min_i
  \MinRad(v_i) \geq 1$ by the definition of $\CThi$. Lemma~\ref{lem:dcsd vs vb} tells
  us that $\dcsd(\V) \subset \VB(1,\ell)$, so we know that
  \begin{equation}
    \label{eq:dcsd-vs-vb}
    \min_{(p,q) \in \dcsd(\V)} d(p,q) \geq \min_{(p,q) \in
    \VB(1,\ell)} d(p,q).
  \end{equation}
  Together, these facts imply that $\PThi(\V) \geq 1$, proving one
  direction of the theorem.

  Suppose that $\PThi(\V) \geq 1$. As above, this means that $\min_i
  \MinRad(v_i) \geq 1$, so Lemma~\ref{lem:dcsd vs vb} applies and \eqref{eq:dcsd-vs-vb}
  holds. If the minimum in the right-hand side of
  \eqref{eq:dcsd-vs-vb} is achieved on the interior of $\VB(1,\ell)$,
  then it is a local minimum of $d(p,q)$ where $p \neq q$ and so is in
  $\dcsd(\V)$. In this case, \eqref{eq:dcsd-vs-vb} is an equality and
  $\CThi(\ell,\V) \geq 1$, completing the proof.

  We are left with the case where the minimum of $d(p,q)$ over
  $\VB(1,\ell)$ is realized by some $(p,q)$ on the boundary of
  $\VB(1,\ell)$. We claim that $\nicefrac{d(p,q)}{2} \geq 1$. This
  will complete the proof that $\CThi(\ell,\V) \geq 1$. 

  By definition, $(p,q)$ is on the boundary of $\VB(1,\ell)$ only if
  $\vb(p,q) = \lceil \nicefrac{\pi}{\theta(1,\ell)} \rceil$. And since
  $\vb(p,q)$ is constant on the interiors of edges, one of $p$ and $q$
  (without loss of generality, $q$) must be a vertex. Since each
  turning angle of the arc of $\V$ between $p$ and $q$ is bounded by
  $\theta(1,\ell)$, Schur's theorem~\cite{MR35:3610} implies that $d(p,q)$ is bounded
  below by the distance between the endpoints of $p'$, $q'$ of a
  planar polygonal arc $\mathcal{P}$ with the same edgelengths and
  each turning angle equal to $\theta(1,\ell)$. We depict the
  situation in Figure~\ref{fig:vb}.

  \begin{figure}
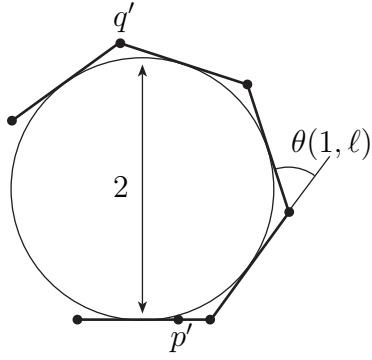

    \centering
    \begin{overpic}{vb_theorem}
      \put(52,1){$p'$}
      \put(37,85){$q'$}
      \put(37,40){$2$}
      \put(83.5,52){$\theta(1,\ell)$}
    \end{overpic}
    \caption{The key step in the proof of Theorem~\ref{thm:vb} is the
      proof that points $p'$ and $q'$ on an arc $\mathcal{P}$ are at
      least distance $2$ apart. This arc has equal edgelengths $\ell$,
      each turning angle equal to $\theta(1,\ell) := 2
      \arctan(\nicefrac{\ell}{2})$ and $n := \lceil \pi/\theta(1,\ell)
      \rceil$ edges. We see above that these conditions imply
      that $\mathcal{P}$ has an inscribed circle of unit radius.
      Further, the marked point $q'$ must have a larger $y$-coordinate
      than the top of the circle, providing the required lower bound
      on the distance from $p'$ to $q'$.}
    \label{fig:vb}
  \end{figure}

  We know that $\mathcal{P}$ has $n = \vb(p,q)$ edges and total
  curvature $(n-1) \theta(1,\ell)$. Since $n = \vb(p,q) = \lceil
   \nicefrac{\pi}{\theta(1,\ell)} \rceil$, we have
  \begin{equation}
    n - 1 < \frac{\pi}{\theta(1,\ell)} \leq n \quad
    \text{ so } \quad (n-1) \theta(1,\ell) < \pi \leq n\theta(1,\ell).
  \end{equation}
  Thus if we add an edge to $\mathcal{P}$ at $q'$ with turning angle
  $\theta(1,\ell)$ to form an arc $\mathcal{P^+}$, the total curvature of
  $\mathcal{P}$ is less than $\pi$ while the total curvature of
  $\mathcal{P^+}$ is at least $\pi$. These facts imply that if the
  first edge of $\mathcal{P}$ lies along the $x$-axis, the point $q'$
  has the largest $y$ coordinate on $\mathcal{P^+}$. But our turning
  angle and edgelength conditions imply that $\mathcal{P^+}$ has an
  inscribed circle of unit radius, so the $y$-coordinate of $q'$ is at
  least two. This implies that $d(p',q') \geq 2$, completing the
  proof.
\end{proof}

These proofs imply an obvious corollary which will be useful in practice:
\begin{corollary}
\label{cor:cthi vs pthi}
If $\dcsd(\V) \subset \VB(\tau,\ell)$ and the distance between any two vertices on the boundary of $\VB$ is strictly greater than $\PThi(\V)$, then $\CThi = \PThi$ for polygons in a neighborhood of $\V$ (regardless of whether or not $\V$ is equilateral with edgelength $\ell$).
\end{corollary}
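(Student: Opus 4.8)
The plan is to mirror the two-part structure of the proof of Theorem~\ref{thm:vb}, replacing the two ingredients that were special to the equilateral, unit-thickness setting (Lemma~\ref{lem:dcsd vs vb} and Schur's theorem) by the two hypotheses of the corollary. Throughout, the curvature terms $\min_i \MinRad(v_i)$ appearing in $\CThi$ and in $\PThi(\V)$ are identical, so it suffices to compare the self-distance terms $\min_{(p,q)\in\VB(\tau,\ell)} d(p,q)$ and $\min_{(p,q)\in\dcsd(\V)} d(p,q)$. One direction is immediate: since $\dcsd(\V)\subset\VB(\tau,\ell)$ by hypothesis, minimizing $d$ over the larger set can only lower the minimum, so $\min_{\VB} d \le \min_{\dcsd} d$ and hence $\CThi \le \PThi$.

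For the reverse inequality I would locate a minimizer $(p^*,q^*)$ of $d$ over the compact set $\VB(\tau,\ell)$ and split into the two cases of the theorem. If $(p^*,q^*)$ lies in the interior of $\VB$, it is a free local minimum of the self-distance function, so $(p^*,q^*)\in\dcsd(\V)$ and $d(p^*,q^*)\ge \min_{\dcsd}d \ge 2\PThi(\V)$. If $(p^*,q^*)$ lies on $\partial\VB$, then---exactly as in the theorem, because $\vb$ is constant on edge interiors---at least one of $p^*,q^*$ is a vertex, so $(p^*,q^*)$ lies on one of the vertex-line segments comprising $\partial\VB$, and the endpoints of each such segment are vertex--vertex pairs on $\partial\VB$. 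I would argue that the minimum of $d$ along the segment is attained either at such a vertex--vertex endpoint, where the second hypothesis gives a value strictly above $\PThi(\V)$, or at an interior critical point of the segment, which is critical in both coordinates and so must be a genuine element of $\dcsd(\V)$: were it not a local minimum of $d$, the descent direction would necessarily exit $\VB$, and following it would produce a nearby local minimum of $d$ lying outside $\VB$, contradicting $\dcsd(\V)\subset\VB$. Either way $d(p^*,q^*)\ge 2\PThi(\V)$, giving $\CThi \ge \PThi$ and hence $\CThi(\V) = \PThi(\V)$.

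For the neighborhood statement I would exploit that $\VB(\tau,\ell)$ depends only on the fixed parameters $\tau,\ell$ and on the combinatorial type of $\V$, hence is literally the same family of pairs for every nearby polygon, and that both hypotheses are open conditions in $\V$ (a finite-set containment together with a strict inequality on finitely many boundary vertex distances). I would fix a neighborhood on which $\dcsd(\cdot)\subset\VB(\tau,\ell)$ persists and the boundary vertex distances remain strictly above $\PThi$, by a continuity and compactness argument on the limiting critical points of the self-distance function, and then apply the fixed-polygon argument above verbatim to each polygon in that neighborhood.

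The hard part will be the boundary case of the reverse inequality. Unlike an interior minimizer, a minimizer on $\partial\VB$ need not be doubly critical, and the delicate configuration to exclude is a vertex--edge boundary minimum whose chord is perpendicular to the edge and whose descent exits $\VB$ toward the diagonal: such a point is \emph{not} in $\dcsd(\V)$, its distance can fall below every vertex--vertex boundary distance, and no local minimum of $d$ need intervene between it and the diagonal. Handling it is precisely where the two hypotheses must act together---the containment $\dcsd(\V)\subset\VB$ to force the interior-critical boundary points back into $\dcsd$, and the strict vertex-distance bound to control the genuine vertex--vertex corners---and the real care lies in checking that these two controls jointly cover every boundary stratum and that the covering survives the perturbation required for the neighborhood conclusion.
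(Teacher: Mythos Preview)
Your plan is exactly the paper's: its proof is one sentence directing the reader to rerun the argument of Theorem~\ref{thm:vb} with the first hypothesis in place of Lemma~\ref{lem:dcsd vs vb} and the second in place of the Schur/turning-angle bound. Your inequality $\CThi\le\PThi$, your interior-minimizer case, and your openness argument for the neighborhood statement all follow that template.

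Your extended worry about the boundary case goes beyond what the paper supplies. In the theorem, Schur's comparison controls \emph{every} boundary pair $(p,q)$---only one of $p,q$ need be a vertex---so the replacement is literal. The corollary's second hypothesis, read as vertex--vertex only, leaves a boundary minimizer $(v_i,q^*)$ with $q^*$ in an edge interior uncovered, and your proposed rescue (descend out of $\VB$ to force a point of $\dcsd(\V)$ outside it) can indeed fail for the reason you name: the descent of $d$ may run monotonically to the diagonal without producing a local minimum. The paper's one-line proof does not engage with this distinction at all, so your scruple is a refinement rather than a defect relative to the paper. For a clean statement one reads (or strengthens) the second hypothesis to bound every pair on $\partial\VB$---equivalently, every chord from a vertex $v_i$ to the edge at combinatorial distance $\lceil\pi/\theta(\tau,\ell)\rceil$---after which the substitution goes through verbatim and your boundary analysis becomes unnecessary.
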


\begin{proof} 
The argument is the same as that of Theorem~\ref{thm:vb}, using the hypotheses instead of Lemma~\ref{lem:dcsd vs vb} and the argument about turning angles.
\end{proof}

\subsection{Struts and Kinks}
In our definition of $\PThi$, we saw that pairs of points in $\dcsd$ and
vertices with minimum $\MinRad$ were in control of thickness. We now
want to develop similar sets of ``controlling'' pairs of points and
vertices for $\CThi$. This will require a bit of care.

Given any two line segments $e_1$ and $e_2$ in space, a calculation
reveals that the minimum distance between them is attained at a single
point unless $e_1$ and $e_2$ are parallel. In that case, the minimum
is attained at an interval of corresponding pairs (as in
Figure~\ref{fig:parallel}). The endpoints of these intervals are
self-distances measured from an endpoint of one segment to a point on
the other.  Following this line of argument we see that for any space
polygon the local minima of the self-distance function $d(p,q)$ are
isolated unless there are pairs of parallel edges, in which case there
may be families of local minima as above. Using these observations we
define
\begin{definition}
  \label{def:strutset} 
  The \emph{strut set} $\Strut(\V)$ is the set
  of pairs $(p,q)$ in $\VB(1,\ell)$ with $\nicefrac{d(p,q)}{2} = 1$ and either
  \begin{itemize}
  \item $(p,q)$ is an isolated local minimum of $d(p,q)$, or
  \item $(p,q)$ is an \emph{endpoint} of a family of local minima of $d(p,q)$.
  \end{itemize}
  In the second case, $(p,q)$ must be a vertex-edge pair joining two
  parallel edges of $\V$. 
\end{definition}
We note that $\Strut(\V)$ is a finite subset of $\dcsd(\V)$ (which may
be infinite if two edges are parallel). It is much easier to define
\begin{definition}
  \label{def:kinkset}
  The \emph{kink set} $\Kink(\V)$ is the set of vertices $v_i$ and signs
  $\pm$ with $\MinRad^{\pm} v_i = 1$.
\end{definition}
The strut and kink sets are both empty if $\CThi(\ell,\V) > 1$.

\subsection{Polygon space and variations of $\CThi$}
We now want to describe the space of variations of a polygon which
preserve or increase $\CThi$ to first order. Given a polygon $\V \in
\R^{3V}$ we can define a variation of $\V$ by any $W = (w_1, \dots,
w_{V}) \in \R^{3V}$. This variation generates a family of polygons
\begin{equation}
  \V_t = \V + t W = (v_1 + t w_1, \dots, v_{V} + t w_{V}).
\end{equation}
We now want to prove that $\CThi(\ell,\V)$ has a one-sided derivative as we
vary $\V$ according to any variation $W$ and to give a finite
procedure for computing that variation. This will require some setup.

\begin{proposition}
  \label{prop:fv}
  Suppose that $\CThi(\ell,\V) = 1$. Then viewing every pair of points
  $(p,q)$ on $\V$ and every $\MinRad^{\pm} v_i$ as functions of $t$, the
  forward time derivative below exists and satisfies
  \begin{multline}
    \label{eq:fv}
    D_W \CThi(\ell,\V) = \left. \frac{\mathrm{d}}{\mathrm{d}t^+}
    \CThi(\V_t) \right|_{t=0} \\ = \min \left\{ 
    \min_{(v_i, \pm) \in \Kink}
    \left. \frac{\mathrm{d}}{\mathrm{d}t^+} (\MinRad^{\pm} v_i)(t)
    \right|_{t=0},
    \min_{\Strut(\V)}
    \left. \frac{\mathrm{d}}{\mathrm{d}t^+}
    \frac{d(p(t),q(t))}{2}\right|_{t=0} \right\}.
  \end{multline}
\end{proposition}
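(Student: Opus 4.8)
The plan is to exhibit $\CThi(\ell,\V_t)$ as a pointwise minimum, over a \emph{fixed} compact index set, of functions that are $C^1$ in $t$, and then invoke the one-sided differentiation rule for such lower envelopes (the min-analogue of Danskin's theorem, which is the sense in which Clarke's calculus applies here). First I would assemble the index set. The kink contributions are indexed by the finite set of pairs $(v_i,\pm)$, with functions $t\mapsto \MinRad^{\pm}(v_i)(\V_t)$; by \eqref{eq:minrad} each is $C^1$ near $t=0$ wherever it is defined, and at an active kink the angle $\theta_i\in(0,\pi)$ and $|e_i|>0$, so this holds there. The strut contributions are indexed by the compact set $\VB(1,\ell)\subset\V\cross\V$. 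The key device is to parametrize a pair $(p,q)$ by the two carrying edges together with their barycentric coordinates, so that the index set does \emph{not} move as $\V_t$ varies; for $p$ at coordinate $s$ on $[v_i,v_{i+1}]$ one has $p(t)=[(1-s)v_i+s v_{i+1}]+t[(1-s)w_i+s w_{i+1}]$, so $\dot p:=\frac{d}{dt}p(t)|_{t=0}=(1-s)w_i+s w_{i+1}$. Setting $F_{(p,q)}(t)=\tfrac12\, d(p(t),q(t))$, and using that $\CThi(\ell,\V)=1>0$ forces $p\neq q$ with $\V$ embedded on all of $\VB(1,\ell)$, each $F_{(p,q)}$ is $C^1$ in $t$ near $0$ with $\partial_t F_{(p,q)}|_{t=0}=\langle p-q,\dot p-\dot q\rangle/(2\norm{p-q})$ depending jointly continuously on the index. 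Taking the disjoint union of the finite kink index set with $\VB(1,\ell)$ yields a single compact $\mathcal I$ with $\CThi(\ell,\V_t)=\min_{\alpha\in\mathcal I}F_\alpha(t)$ and $(\alpha,t)\mapsto\partial_t F_\alpha$ jointly continuous near $t=0$.

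With this in place, the envelope rule gives existence of the forward derivative and the identity $D_W\CThi(\ell,\V)=\min_{\alpha\in A^*}\partial_t F_\alpha|_{t=0}$, where $A^*=\{\alpha\in\mathcal I:F_\alpha(0)=\CThi(\ell,\V)=1\}$ is the active set. The active kink indices are exactly $\Kink(\V)$ by Definition~\ref{def:kinkset}, so these contribute precisely the first inner minimum in \eqref{eq:fv}. It remains to show that the minimum of $\partial_t F_{(p,q)}$ over the active distance pairs $A^*_d=\{(p,q)\in\VB(1,\ell):d(p,q)=2\}$ is attained on $\Strut(\V)$.

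Each element of $A^*_d$ is a global minimizer of $d$ over $\VB(1,\ell)$. The Schur-comparison argument used in the proof of Theorem~\ref{thm:vb} forces $d\geq 2$ on $\partial\VB(1,\ell)$ (indeed, the comparison vertex $q'$ lies above the top of the inscribed unit circle), with equality only in the non-generic configuration; hence the active pairs are interior minimizers, so they are local minima of $d$ on $\V\cross\V$, i.e.\ elements of $\dcsd(\V)$. By the discussion preceding Definition~\ref{def:strutset}, such local minima are isolated --- and then already lie in $\Strut(\V)$ --- unless two edges are parallel, in which case they fill an interval $I$ of corresponding closest pairs. On such an interval the connecting vector $p(u)-q(u)$ is the constant common perpendicular of length $2$, while $\dot p(u)-\dot q(u)$ is affine in the interval parameter $u$ (the velocities are affine in the barycentric coordinate, which varies affinely along $I$); therefore $\partial_t F_{(p,q(u))}|_{t=0}=\langle p-q,\dot p(u)-\dot q(u)\rangle/(2\norm{p-q})$ is affine in $u$ and attains its minimum over $I$ at an endpoint, which by Definition~\ref{def:strutset} belongs to $\Strut(\V)$. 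Thus $\min_{A^*_d}\partial_t F=\min_{\Strut(\V)}\partial_t F$, and combining with the kink terms gives \eqref{eq:fv}.

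The main obstacle is the first step: justifying the exchange of $\frac{\mathrm d}{\mathrm dt^+}$ with the minimum over the \emph{infinite} index set $\VB(1,\ell)$. The finite min-rule is elementary, but here one must verify the hypotheses of the compact-family (Danskin/Clarke) version --- compactness of $\mathcal I$ and joint continuity of $(\alpha,t)\mapsto F_\alpha(t)$ and of $\partial_t F_\alpha(t)$ --- and in particular confirm that the fixed-barycentric-coordinate parametrization makes each $F_\alpha$ genuinely $C^1$ in $t$ with the required uniformity near the active set (which is exactly where $\norm{p-q}$ stays bounded away from $0$). Once that uniform regularity is established, existence of the one-sided derivative and the min formula follow, and the reduction to $\Kink(\V)$ and $\Strut(\V)$ is the geometric affineness computation above.
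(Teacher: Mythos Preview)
Your proposal is correct and follows essentially the same route as the paper: invoke Clarke's theorem for min-functions over the compact index set (finite kink set disjoint union $\VB(1,\ell)$) to obtain the forward derivative as a minimum over the active set, then reduce the active distance pairs to $\Strut(\V)$ by observing that along a parallel-edge family the time derivative of $d/2$ is affine in the interval parameter and hence minimized at an endpoint. The paper's explicit $\eta$-parametrized computation with labeled points $p,q,r,s$ is exactly your affineness observation written out in coordinates; the paper does \emph{not} include your Schur-comparison step to rule out active pairs on $\partial\VB(1,\ell)$ (it simply takes for granted that parallel edges are the only source of non-strut active pairs), so your argument is slightly more careful there---though note that the Schur step as you invoke it from Theorem~\ref{thm:vb} tacitly uses the equilateral hypothesis, which the Proposition itself does not assume.
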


\begin{proof} 
  We begin by ignoring any $\MinRad v_i$ functions which are not
  defined (which happens when $v_{i-1}$, $v_i$ and $v_{i+1}$ are colinear). Since $\CThi(\ell,\V)$ is equal to $1$, the $\MinRad$ of these
  vertices will not affect $\CThi(\V + tW)$ for small enough $t$.  The
  function $\CThi$ is then the minimum of a set of differentiable
  functions $\MinRad^{\pm} v_i$ and $\nicefrac{d(p,q)}{2}$ indexed by
  the (compact) disjoint union of compact sets $\{v_1,
  \pm\} \sqcup \dots \sqcup \{v_{V},\pm\} \sqcup \VB(1,\ell) $ (where we assume that any
  $v_i$ with $\MinRad v_i$ undefined are missing). Clark's theorem for
  min-functions~\cite{MR51:3373} tells us immediately that the
  derivative in \eqref{eq:fv} exists.

  However, Clark's theorem tells us that 
  \begin{equation*}
    D_W \CThi(\ell,\V) = \min \left\{ 
    \min_{\substack{(v_i,\pm) \\
    \MinRad^\pm v_i = 1}} \!\!\!\!\!\! \left. \frac{\mathrm{d}}{\mathrm{d}t^+} \right|_{t=0}
    \!\!\!\! (\MinRad^\pm v_i)(t),\!
    \min_{\substack{(p,q) \in \VB(1,\ell) \\
    \nicefrac{d(p,q)}{2} = 1}} \left. \frac{\mathrm{d}}{\mathrm{d}t^+} \right|_{t=0}
    \!\!\!\! \frac{d(p(t),q(t))}{2} \right\}.
  \end{equation*}
  The first set $\{ (i,\pm)\, | \,\MinRad^\pm v_i = 1\}$ is the kink
  set, which matches~\eqref{eq:fv}. But if a pair of edges in $\V$ are
  parallel and at distance $2$ from one another, then $\Strut(\V)$ is
  only a subset of $\{(p,q) \in \VB(1,\ell) \,|\, \nicefrac{d(p,q)}{2} =
  1\}$. We must prove that
  \begin{equation}
    \label{eq:toprove}
    \min_{\substack{(p,q) \in \VB(1,\ell) \\
    \nicefrac{d(p,q)}{2} = 1}} 
    \left. \frac{\mathrm{d}}{\mathrm{d}t^+}
    \frac{d(p(t),q(t))}{2}\right|_{t=0} = \min_{(p,q) \in \Strut(\V)}
    \left. \frac{\mathrm{d}}{\mathrm{d}t^+}
    \frac{d(p(t),q(t))}{2}\right|_{t=0}\!\!\!\!\!\!\!\!.
  \end{equation}
  For any pair of parallel edges with distance 2, we may assume that
  the situation is as in Figure~\ref{fig:parallel}.
  \begin{figure}
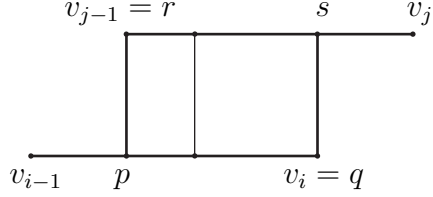

    \centering
    \begin{overpic}{parallel_segments}
      \put(5,4){$v_{i-1}$}
      \put(65,4){$v_i = q$}
      \put(28,4){$p$}
      \put(17,40.5){$v_{j-1} = r$}
      \put(72,40.5){$s$}
      \put(92,40.5){$v_j$}
    \end{overpic}
    \caption{When the edges $e_i$ and $e_j$ are parallel, many chords
      realize the minimum distance between the segments. In this case,
      we show that the minimum derivative of distance between any of
      these pairs occurs at one end or the other.  We name the
      endpoints of this family of chords $p$ and $q$ on $e_i$ and $r$
      and $s$ on $e_j$. One of each of these pairs must be an
      endpoint --- in this case it is $q = v_i$ and $r = v_{j-1}$ that
      are endpoints. }
    \label{fig:parallel}
  \end{figure}
  We label points $p$, $q$, $r$ and $s$ as in the Figure, and
  parametrize the line segments between $p$ and $q$ and between $r$
  and $s$ by $\eta \in [0,1]$. The pairs with $\eta = 0$ and $\eta =
  1$ are in the strut set of $\V$, but the pairs given by all other
  values of $\eta$ are not. To prove~\eqref{eq:toprove} we must find
  \begin{equation*}
    \min_{\eta \in [0,1]} \frac{\mathrm{d}}{\mathrm{d}t^+} \, \left\Vert 
    \eta p + (1-\eta) q - \eta r - (1-\eta) s \right\Vert,
  \end{equation*}
  and show that it is attained at $\eta = 0$ or $\eta = 1$. If we view
  $p$, $q$, $r$, and $s$ as functions of time, then for any given
  $\eta$, the time derivative of the corresponding length is given by
  \begin{equation*}
    \frac{1}{2} \langle \eta p + (1-\eta) q - \eta r - (1-\eta) s,
    \eta p' + (1-\eta) q' - \eta r' - (1-\eta) s' \rangle,
  \end{equation*}
  where we have used the fact that $\nicefrac{d(e_i,e_j)}{2} = 1$.
  Regrouping, we can rewrite this as
  \begin{equation*}
    \frac{1}{2} \langle \eta (p - r) + (1-\eta) (q-s), \eta
    (p-r)' + (1-\eta) (q-s)' \rangle,
  \end{equation*}
  and using the fact that $p-r = q-s$ at time $0$, we can again
  rewrite this as
  \begin{equation*}
    \eta \langle p - r, p' - r' \rangle +
    (1-\eta) \langle q - s, q' - s' \rangle.
  \end{equation*}
  Now as $\eta$ varies between $0$ and $1$, we note that the
  $\eta$ derivative of the above quantity is
  \begin{equation*}
    \langle p - r, p' - r' \rangle - 
    \langle q - s, q' - s'\rangle.
  \end{equation*}
  In particular, this derivative is nonzero for all $\eta \in [0,1]$
  unless $\langle p - r, p' - r' \rangle = \langle q - s, q' - s'
  \rangle$, in which case it vanishes identically. This means that the
  minimum value of this expression is always realized when $\eta =
  0$ or $\eta = 1$. This completes the proof.
\end{proof}

We can use Proposition~\ref{prop:fv} to define two sets of variations
that will be of particular interest to us. The first set consists of
variations that are tangent to the boundary or pointing into the
interior of the set of polygons $\CThi(\ell,\V) \geq 1$. We will allow our
polygons to move in these directions.

\begin{definition}
  Suppose we have a polygon $\V$ and a variation $W$ of $\V$.  If
  $\CThi(\ell,\V) = 1$, we say $W$ is an \emph{infinitesimal motion}
  of $\V$ if the forward directional derivative
  \begin{equation}
    D_{W} \CThi(\ell,\V) \geq 0.
  \end{equation}
  If $\CThi(\ell,\V) > 1$, we call every variation $W$ an infinitesimal motion. The set of all infinitesimal motions of $\V$ is denoted
  $I(\V)$.
\end{definition}

The following Corollary follows directly from Proposition~\ref{prop:fv}.
\begin{corollary} 
  The set $I(\V)$ is the dual cone of the set $-\grad
  \nicefrac{d(p,q)}{2}$ for $(p,q) \in \Strut(\V)$ and $-\grad
  \MinRad^\pm v_i$\ \,for $(v_i,\pm) \in \Kink(\V)$.
\end{corollary}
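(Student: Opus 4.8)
The plan is to read off the corollary directly from Proposition~\ref{prop:fv}, once we observe that each forward directional derivative appearing there is in fact a linear functional of the variation $W$. First I would dispose of the trivial case $\CThi(\ell,\V) > 1$: here $\Strut(\V)$ and $\Kink(\V)$ are both empty, so the generating set is empty and its dual cone is all of $\R^{3V}$, while by definition every variation is then an infinitesimal motion, so $I(\V) = \R^{3V}$ as well and the two sides agree.

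Now assume $\CThi(\ell,\V) = 1$. In the setup of Proposition~\ref{prop:fv} we saw that, after discarding the finitely many vertices where $\MinRad$ is undefined, each surviving function $\MinRad^{\pm} v_i$ and each distance function $\nicefrac{d(p,q)}{2}$ attached to a fixed pair is differentiable at $\V$ as a function on $\R^{3V}$. (In particular every kink has positive turning angle, since $\MinRad^{\pm} v_i = 1$ forces $\theta_i > 0$, so the colinear case does not arise there.) Consequently the forward time derivatives are genuine two-sided directional derivatives, linear in $W$, and may be written as inner products against the corresponding gradients:
\begin{equation*}
  \left.\frac{\d}{\d t^+}(\MinRad^{\pm} v_i)(t)\right|_{t=0}
  = \langle \grad \MinRad^{\pm} v_i,\, W\rangle,
  \qquad
  \left.\frac{\d}{\d t^+}\frac{d(p(t),q(t))}{2}\right|_{t=0}
  = \langle \grad \nicefrac{d(p,q)}{2},\, W\rangle.
\end{equation*}
Substituting these into Proposition~\ref{prop:fv} expresses $D_W\CThi(\ell,\V)$ as the minimum of the finitely many linear functionals $\langle \grad \MinRad^{\pm} v_i, W\rangle$ over $\Kink(\V)$ and $\langle \grad \nicefrac{d(p,q)}{2}, W\rangle$ over $\Strut(\V)$.

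The remaining step is purely formal: a minimum of finitely many reals is nonnegative if and only if every term is nonnegative. Hence $W \in I(\V)$, i.e.\ $D_W\CThi(\ell,\V) \geq 0$, holds if and only if $\langle \grad \MinRad^{\pm} v_i, W\rangle \geq 0$ for every $(v_i,\pm) \in \Kink(\V)$ and $\langle \grad \nicefrac{d(p,q)}{2}, W\rangle \geq 0$ for every $(p,q) \in \Strut(\V)$; equivalently, $\langle g, W\rangle \leq 0$ for every generator $g$ in the set of $-\grad \nicefrac{d(p,q)}{2}$ and $-\grad \MinRad^{\pm} v_i$. This is precisely the defining condition for $W$ to lie in the dual cone of that generating set, which proves the corollary.

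I expect no serious obstacle here: the only non-formal ingredient is the linearity of the directional derivatives in $W$, and that is already secured by the differentiability of the individual constraint functions established in proving Proposition~\ref{prop:fv}. The single point worth making explicit is why differentiability survives at the relevant vertices and struts --- namely that kinks carry positive turning angle and that each fixed strut pair yields a smooth distance function --- and this is immediate.
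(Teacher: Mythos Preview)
Your proof is correct and follows essentially the same approach as the paper: both invoke Proposition~\ref{prop:fv} to reduce the question to the nonnegativity of finitely many directional derivatives, then identify that condition with membership in the dual cone via the definition. Your version is somewhat more explicit---you handle the case $\CThi(\ell,\V) > 1$ separately and spell out why the one-sided derivatives are genuinely linear in $W$---but the argument is the same.
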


\begin{proof}
  We need only recall that the dual cone $A^+$ to a set of vectors $A$
  is the set of vectors $X$ for which $\left< X, W \right> \leq 0$ for
  all $W \in A$. Since the directional derivatives of
  $\nicefrac{d(p,q)}{2}$ and $\MinRad^\pm v_i$ in the direction $X$
  are the dot products of $X$ with $-\nicefrac{\grad d(p,q)}{2}$ and
  $-\grad \MinRad^\pm v_i$, $X$ is in the dual cone if and only if all
  these directional derivatives are nonnegative. But by the
  Proposition, this implies that $D_X \CThi(\ell,\V)$ is nonnegative
  as well.
\end{proof}

The second set of variations of interest will be the normal cone of
the boundary of the set of polygons with $\CThi(\ell,\V) \geq 1$. We will
forbid our polygons from moving in these directions.
\begin{definition}
  The convex cone of \emph{resolvable motions} $R(\V)$ of $\V$ is the
  cone generated by the set $-\grad \nicefrac{d(p,q)}{2}$ for $(p,q) \in
  \Strut(\V)$ and $-\grad \MinRad^\pm v_i$ for $(v_i,\pm) \in \Kink(\V)$.
  $R(\V)$ is the set of vectors $R \in \R^{3V}$ which can be expressed
  in the form
  \begin{equation}
    \label{eqn:rdef}
    R = \sum_{(p,q) \in \Strut(\V)} -\lambda^2_i \grad \frac{d(p,q)}{2}
    + \sum_{v_j \in \Kink(\V)} -\lambda^2_i \grad \MinRad v_j.
  \end{equation}
  Here the indices $i$ and $j$ just number the elements of the strut
  and kink sets. The constants $\lambda^2_i$ and $\lambda^2_j$ are
  nonnegative numbers, as suggested by the notation.
\end{definition}
It is a standard fact from optimization theory that $R(\V) = I(\V)^+$, since for any set of vectors $\{v\}$ the double dual $\{v\}^{++}$ is the cone generated by $\{v\}$.

\subsection{Theory of constrained optimization}
Given a function $f(\V)$ on the space of polygons $\R^{3V}$, we can
compute the negative gradient $-\grad f$, which is a variation vector
in $\R^{3V}$. We are now interested in understanding how this gradient
is modified by the constraint $\CThi(\ell,\V) \geq 1$. This thickness
constraint models the effect of an embedded tube around the polygon:
it allows some motions of $\V$ and blocks others.

\begin{definition}
  \label{def:cg}
  The \emph{constrained gradient} $(-\grad f)_I$ of $-f$ is the
  closest vector in $I(\V)$ to $-\grad f(\V)$.
\end{definition}

We now recall that any convex cone and its dual cone provide a kind
of orthogonal decomposition of their ambient vector space, as shown in 
Figure~\ref{fig:decomp}.

\begin{figure}
  \begin{center}
    \begin{overpic}[scale=0.9]{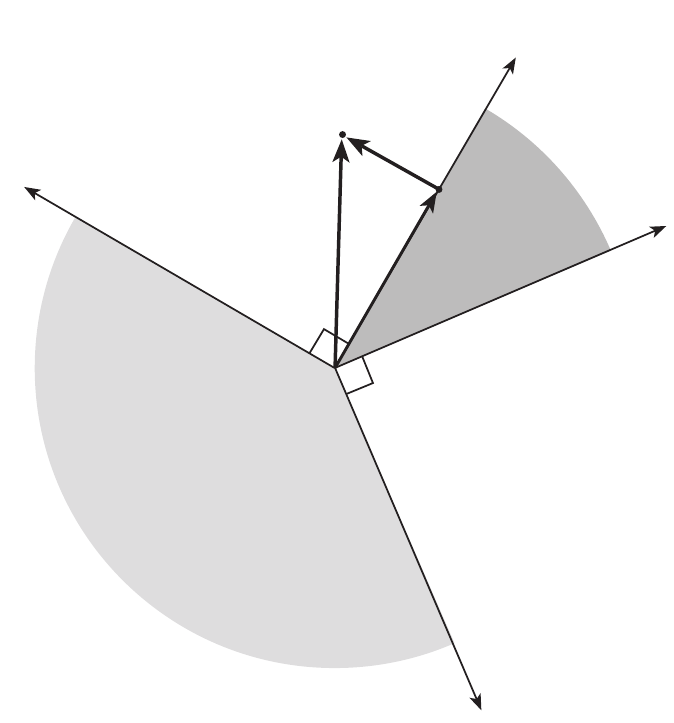}
      \put(65,70){$R(\V)$}
      \put(28,37){$I(\V)$}
      \put(45,83){$W$}
      \put(57,60){$W_R$}
      \put(54,79){$W_I$}
     \end{overpic}
  \end{center}
  \caption{The infinitesimal motions $I(\V)$ and the resolvable
    motions $R(\V)$ of $\V$ form dual convex cones.  Hence, although
    these are not orthogonal subspaces of~$\R^{3V}$, a similar
    decomposition property holds true: any vector $W$ may be written
    uniquely as a sum of a vector $W_I \in I(\V)$ and a vector $W_R
    \in R(\V)$.}
  \label{fig:decomp}
\end{figure}

\begin{proposition}[\cite{MR0286498}, Thm. $2.8.7$]
  \label{prop:decomposition}
  Any vector $W \in R^{3V}$ may be uniquely written
  \begin{equation}
    W = W_R + W_I, \quad \text{ where } \langle
    W_R, W_I\, \rangle = 0, 
  \end{equation} 
  $W_R \in R(\V)$ is the closest resolvable motion to $W$, and $W_I
   \in I(\V)$ is the closest infinitesimal motion to $W$.
\end{proposition}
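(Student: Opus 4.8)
The statement is the Moreau decomposition of a vector with respect to a pair of mutually dual convex cones, specialized to the finite-dimensional space $\R^{3V}$. The plan is to \emph{define} $W_I$ to be the constrained gradient of Definition~\ref{def:cg}, that is, the closest point of $W$ in $I(\V)$, then to set $W_R := W - W_I$ and verify that this pair has all the required properties. The cone $I(\V)$ is a nonempty, closed, convex subset of $\R^{3V}$ (it is the dual cone of a finite set of vectors, hence an intersection of finitely many closed half-spaces), so the Hilbert projection theorem guarantees that the closest point $W_I$ exists and is unique. This immediately gives a candidate decomposition $W = W_R + W_I$; everything else is a consequence of the variational inequality characterizing the projection.

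First I would record that characterization: for the projection $W_I$ of $W$ onto the closed convex set $I(\V)$,
\begin{equation*}
  \langle W - W_I, \, Y - W_I \rangle \leq 0 \qquad \text{for all } Y \in I(\V).
\end{equation*}
Because $I(\V)$ is a cone, both $0$ and $2 W_I$ lie in $I(\V)$, and substituting $Y = 0$ and $Y = 2 W_I$ into this inequality yields $\langle W_R, W_I \rangle \geq 0$ and $\langle W_R, W_I \rangle \leq 0$ respectively, so $\langle W_R, W_I \rangle = 0$: this is the claimed orthogonality. Feeding $\langle W_R, W_I \rangle = 0$ back into the variational inequality gives $\langle W_R, Y \rangle \leq 0$ for every $Y \in I(\V)$, which says precisely that $W_R \in I(\V)^+$. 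Since the text has already observed that $R(\V) = I(\V)^+$, this shows $W_R \in R(\V)$.

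It remains to see that $W_R$ is the closest point of $W$ in $R(\V)$ and that the decomposition is unique. For the former I would use the dual relation in the other direction: as the double dual of a closed convex cone is itself, $I(\V) = R(\V)^+$, so $W_I \in R(\V)^+$ and hence $\langle W_I, Z \rangle \leq 0$ for all $Z \in R(\V)$. Combining this with $\langle W_I, W_R \rangle = 0$ gives $\langle W - W_R,\, Z - W_R \rangle = \langle W_I, Z \rangle \leq 0$ for all $Z \in R(\V)$, which is exactly the variational inequality certifying $W_R$ as the projection of $W$ onto $R(\V)$. For uniqueness, suppose $W = A + B$ with $A \in R(\V)$, $B \in I(\V)$ and $\langle A, B \rangle = 0$; then for any $Y \in I(\V)$ we have $\langle W - B,\, Y - B \rangle = \langle A, Y \rangle - \langle A, B \rangle = \langle A, Y \rangle \leq 0$ since $A \in R(\V) = I(\V)^+$, so $B$ satisfies the variational inequality for the projection onto $I(\V)$, forcing $B = W_I$ and $A = W_R$.

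The only genuine input beyond routine manipulation is the existence and uniqueness of the metric projection onto a closed convex cone, together with the duality facts $R(\V) = I(\V)^+$ and $I(\V) = R(\V)^+$, both of which are already in hand (the former stated in the text, the latter being the standard double-dual identity for closed convex cones, applicable because $R(\V)$ is a finitely generated, hence closed, polyhedral cone). There is no real obstacle; the main point to get right is the exploitation of the cone's scale-invariance --- the substitutions $Y = 0$ and $Y = 2 W_I$ --- which is what upgrades the one-sided projection inequality into the sharp orthogonality $\langle W_R, W_I \rangle = 0$.
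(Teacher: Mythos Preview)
Your proof is correct; it is the standard argument for the Moreau decomposition with respect to mutually polar closed convex cones. The paper itself does not prove this proposition at all: it is stated with a citation to Theorem~2.8.7 of~\cite{MR0286498} and used without further comment. So there is nothing to compare against, and your self-contained argument is more than the paper provides.
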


We note that this Proposition shows that the constrained gradient of
$-f$ is well-defined. Further, it is easy to show that the constrained
gradient is the direction of steepest descent for $f$ within
$I(\V)$. This makes us guess that the constrained gradient should
vanish at a critical point for minimizing~$f$. To prove it, we define
critical points more carefully

\begin{definition}
  \label{def:critical}
  We say that $\V$ is \emph{thickness-critical for minimizing
    $f$} if either:
    \begin{itemize}
    \item $D_W f = 0$, or 
    \item  $\CThi(\ell,\V) = 1$ and for any~$W$ 
    with $D_{W} f(\V) < 0$, we have $D_{W} \CThi(\ell,\V) < 0$.
    \end{itemize}
\end{definition}

In the first case,  we are at an unconstrained critical point of the objective function $f$. In the second, we are at a constrained critical point where motion in the direction of the negative gradient of $f$ is blocked by active constraints. We then have a version of the Kuhn-Tucker theorem (restated in our
language from the original form in~\cite{MR2284052}), which gives a
verifiable condition for thickness-criticality. 

 \begin{theorem}
   \label{thm:constrainedcritical}
   The polygon $\V$ is thickness-critical for minimizing $f$ $\iff$
   $-\grad f$ is in $R(\V)$ $\iff$ the constrained gradient $(-\grad
   f)_I$ vanishes.
 \end{theorem}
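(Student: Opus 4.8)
The plan is to prove the two equivalences in turn: first that $\V$ is thickness-critical for minimizing $f$ if and only if $-\grad f \in R(\V)$, and then that $-\grad f \in R(\V)$ if and only if the constrained gradient $(-\grad f)_I$ vanishes. The engine behind the first equivalence is a dictionary between the directional-derivative language of Definition~\ref{def:critical} and the cone language of $I(\V)$ and $R(\V)$. Specifically, by Proposition~\ref{prop:fv} together with the definition of $I(\V)$, a variation $W$ lies in $I(\V)$ exactly when $D_W \CThi(\ell,\V) \geq 0$; equivalently, $W \notin I(\V)$ precisely when $D_W \CThi(\ell,\V) < 0$. I would also use the linearity $D_W f = \langle \grad f, W\rangle$, so that $D_W f < 0$ is the same as $\langle -\grad f, W\rangle > 0$.

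For the first equivalence I would split on the value of $\CThi(\ell,\V)$. When $\CThi(\ell,\V) > 1$ the strut and kink sets are empty, so $R(\V) = \{0\}$ and only the first clause of Definition~\ref{def:critical} can apply; there thickness-criticality means $\grad f = 0$, which is exactly $-\grad f \in \{0\} = R(\V)$. When $\CThi(\ell,\V) = 1$, the second clause says that every $W$ with $D_W f < 0$ satisfies $D_W \CThi(\ell,\V) < 0$, i.e.\ every such $W$ lies outside $I(\V)$. Taking the contrapositive, this is the assertion that $\langle -\grad f, W\rangle \leq 0$ for every $W \in I(\V)$, which is precisely the statement that $-\grad f$ lies in the dual cone $I(\V)^+$. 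Since $R(\V) = I(\V)^+$, this gives $-\grad f \in R(\V)$. The first clause ($\grad f = 0$) is subsumed here, since it forces $-\grad f = 0 \in R(\V)$ and makes the second-clause implication vacuously true.

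The second equivalence is a direct consequence of the orthogonal-type decomposition in Proposition~\ref{prop:decomposition}. Writing $-\grad f = (-\grad f)_R + (-\grad f)_I$ with $(-\grad f)_R \in R(\V)$, $(-\grad f)_I \in I(\V)$ and the two pieces orthogonal, I note that $(-\grad f)_I = 0$ forces $-\grad f = (-\grad f)_R \in R(\V)$; conversely, if $-\grad f \in R(\V)$ then $-\grad f = (-\grad f) + 0$ is a decomposition of the required form, so uniqueness gives $(-\grad f)_I = 0$. The step I expect to be the main obstacle is the careful contrapositive bookkeeping in the $\CThi(\ell,\V) = 1$ case: one must check that the implication-phrased criterion of Definition~\ref{def:critical} translates exactly into dual-cone membership and that the degenerate first clause is correctly absorbed, rather than leaving a gap between the two formulations. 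Everything else is a formal application of the cone duality $R(\V) = I(\V)^+$ and Proposition~\ref{prop:decomposition}.
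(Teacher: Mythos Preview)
Your proposal is correct and is essentially the paper's argument, reorganized. The only notable difference is packaging: the paper invokes Farkas' theorem explicitly to produce, when $-\grad f \notin R(\V)$, a witness $W$ with $D_W f < 0$ and $\langle W, R\rangle \le 0$ for all $R \in R(\V)$, whereas you achieve the same thing by taking the contrapositive and using the identity $R(\V) = I(\V)^+$ directly. Since that identity is stated in the paper just before the theorem (and Farkas is exactly what underlies it), these are the same argument; your version is arguably cleaner, and your explicit treatment of the $\CThi(\ell,\V) > 1$ case is a nice bit of hygiene that the paper leaves implicit.
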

 
 \begin{proof}
   It suffices to show that the first two statements are equivalent,
   since the second and third are clearly equivalent by
   Proposition~\ref{prop:decomposition}.  

   If $-\grad f$ is not in $R(\V)$, then Farkas' theorem implies that
   there exists some $W$ with $\langle W, \grad f \rangle = D_W f < 0$
   and $\langle W, R \rangle \leq 0$ for all $R \in R(\V)$
   (\cite{panik}, p.\,118). Using the definition of $R(\V)$ and
   Proposition~\ref{prop:fv}, this implies $D_W \CThi(\ell,\V) \geq
   0$. Thus $\V$ is not thickness-critical for minimizing~$f$.

   If $-\grad f$ is in $R(\V)$ we will prove that $\V$ is
   thickness-critical for minimizing $f$. We first observe that the
   dual cone of $-\grad f$ contains the dual cone $R^+(\V)$. Now
   suppose we have some $W$ with $D_{W} f < 0$. Then $\langle W,
   -\grad f \rangle > 0$, so $W \not\in (-\grad f)^+$ and in
   particular $W \not\in R^+(\V)$. But this means that $\langle W, R
   \rangle > 0$ for some $R \in R(\V)$, so $D_W \CThi(\ell,\V) < 0$. Hence
   $\V$ is thickness-critical for minimizing~$f$.
 \end{proof}
   
 We can give a natural interpretation of this Theorem in mathematical
 and physical terms by considering the condition $-\grad f \in
 R(\V)$. By definition, this means that 
 \begin{equation}
   \label{eqn:kt}
   -\grad f + \sum_{(p,q) \in \Strut(\V)} \lambda^2_i \,\grad
    \frac{d(p,q)}{2} + \sum_{v_j \in \Kink(\V)} \lambda^2_j \, \grad \MinRad
    v_i = 0.
 \end{equation}
 Mathematically, the $\lambda^2_i$ and $\lambda^2_j$ are Lagrange
 multipliers. If we think of the thickness constraint as an embedded
 tube around $\V$, we can interpret these scalars as magnitudes of
 compression forces transmitted by tube contacts (for struts) and
 angles where the polygon resists further bending (for kinks).
 
 In general, we cannot expect every local minimum of a constrained
 function to be a constrained critical point in the sense of
 Definition~\ref{def:critical}. If the set of polygons defined by
 $\CThi(\ell,\V)$ had an outward-pointing cusp we might reach a point where
 some $W$ with $D_{W} f < 0$ had $D_{W} \CThi = 0$. For example, the
 constrained system 
 \begin{equation*}
\text{ minimize } f(x,y) = -x, \text{ subject to } g(x,y) = \min
 \{x^3 - y, y \} \geq 0
 \end{equation*}
 has this property at the local minimum
 $(0,0)$ for $W = (1,0)$.  The problem here is simply that $D_{W} g \leq
 0$ for all $W$. This does not happen for thickness-constrained
 polygons, but we will need another idea to prove it:
 \begin{definition}
   We say that $\V$ is \emph{constraint-qualified} (in the
   sense of Mangasarian and Fromovitz~\cite{MR0207448}) if there exists some $W$
   so that $D_{W} \CThi > 0$.
 \end{definition}
 It is then standard to show
 \begin{proposition}[\cite{MR2284052}]
   Any constraint-qualified local minimum of $f$ is a
   thickness-critical point for minimizing $f$.
 \end{proposition}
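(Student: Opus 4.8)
The plan is to run the standard Mangasarian--Fromovitz argument, adapted to our setting where $\CThi$ is only one-sidedly differentiable, and to split into two cases according to whether the thickness constraint is active at $\V$. If $\CThi(\ell,\V) > 1$, then the constraint is inactive in a neighborhood of $\V$, so a local minimum of $f$ subject to $\CThi \geq 1$ is an ordinary unconstrained local minimum; hence $\grad f(\V) = 0$, so $D_W f = 0$ for every variation $W$, and the first clause of Definition~\ref{def:critical} holds immediately. Constraint qualification is not needed here.

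The substantive case is $\CThi(\ell,\V) = 1$, where I would argue by contradiction to establish the second clause. Suppose there were a variation $W$ with $D_W f < 0$ but $D_W \CThi(\ell,\V) \geq 0$. The purpose of constraint qualification is exactly to rule this out: I would choose $W_0$ with $D_{W_0} \CThi > 0$ and blend it in, setting $W_\epsilon = W + \epsilon W_0$. By Proposition~\ref{prop:fv}, the forward derivative $D_{(\cdot)}\CThi$ is the minimum over the active struts $\Strut(\V)$ and kinks $\Kink(\V)$ of the directional derivatives of the differentiable constraint functions, each of which is linear in the variation; writing these linear functionals as $\langle \grad c_j, \cdot \rangle$, I then have $\langle \grad c_j, W_\epsilon \rangle \geq D_W\CThi + \epsilon\, D_{W_0}\CThi \geq \epsilon\, D_{W_0}\CThi > 0$ for every active index $j$, whence $D_{W_\epsilon}\CThi > 0$. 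At the same time $D_{W_\epsilon} f = D_W f + \epsilon\, D_{W_0} f < 0$ once $\epsilon$ is small enough, since $D_W f < 0$.

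I would then exploit the strict positivity: because $\CThi$ is locally the minimum of finitely many differentiable functions, a strictly positive one-sided derivative forces $\CThi(\V + t W_\epsilon) = 1 + t\, D_{W_\epsilon}\CThi + o(t) > 1$ for all sufficiently small $t > 0$, so the perturbed polygon is feasible, while $f(\V + t W_\epsilon) = f(\V) + t\, D_{W_\epsilon} f + o(t) < f(\V)$. This contradicts the local minimality of $\V$. Hence no such $W$ exists, the second clause of Definition~\ref{def:critical} holds, and $\V$ is thickness-critical. (Equivalently, one could phrase the conclusion through Theorem~\ref{thm:constrainedcritical}, but the direct contradiction is cleaner.)

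The main obstacle is conceptual rather than computational: a merely nonnegative one-sided derivative $D_W\CThi \geq 0$ does not by itself produce a feasible motion, because the constraints are nonlinear and the polygon can leave the feasible set to second order --- precisely the outward-cusp pathology illustrated by the $\min\{x^3 - y,\, y\}$ example preceding the statement. The role of constraint qualification is to furnish a strictly inward direction $W_0$ whose admixture upgrades the weak inequality to the strict inequality $D_{W_\epsilon}\CThi > 0$, and only a strict sign guarantees first-order feasibility. The one place care is needed is the bookkeeping of the min-of-linear-functionals, ensuring that adding $\epsilon W_0$ raises \emph{every} active constraint derivative simultaneously; Proposition~\ref{prop:fv} is exactly what reduces $\CThi$ to a minimum over a finite index set, making that step valid.
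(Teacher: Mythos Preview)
Your argument is correct and is the standard Mangasarian--Fromovitz proof. Note that the paper does not supply its own proof of this proposition; it simply cites \cite{MR2284052}, so there is no in-paper argument to compare against. Your two-case split and the blending $W_\epsilon = W + \epsilon W_0$ to upgrade $D_W\CThi \geq 0$ to $D_{W_\epsilon}\CThi > 0$ is exactly the intended mechanism, and your use of Proposition~\ref{prop:fv} to reduce to finitely many linear functionals is the right tool.

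One small wording issue: you write that ``$\CThi$ is locally the minimum of finitely many differentiable functions.'' Strictly speaking it is the minimum over the infinite compact family $\VB(1,\ell)$ together with the $\MinRad^\pm$ functions; only the \emph{forward derivative} reduces to a finite minimum over $\Strut(\V)\cup\Kink(\V)$, via Clarke's theorem. This does not affect the validity of your feasibility step, since the existence of the one-sided derivative (Proposition~\ref{prop:fv}) already gives $\CThi(\V + tW_\epsilon) = 1 + t\,D_{W_\epsilon}\CThi + o(t)$, and the strict positivity of $D_{W_\epsilon}\CThi$ is all you need. But the justification should be phrased that way rather than appealing to a finite local description of $\CThi$ itself.
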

 In our case, scaling $\V$ provides the desired motion, so we
 have
 \begin{corollary}
   \label{cor:maxes-are-critical}
   If the polygon $\V$ is a local minimum for $f$, then it is
   a thickness-critical point for minimizing $f$.
 \end{corollary}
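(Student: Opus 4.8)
The plan is to deduce the Corollary directly from the preceding Proposition by checking that \emph{every} feasible polygon satisfies the Mangasarian--Fromovitz constraint qualification. Since that Proposition asserts that any constraint-qualified local minimum of $f$ is thickness-critical, it suffices to exhibit, for an arbitrary $\V$ with $\CThi(\ell,\V) \geq 1$, a single variation $W$ with $D_W \CThi(\ell,\V) > 0$.

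The natural choice is $W = \V$ itself, which generates the family of uniform dilations $\V_t = \V + t\V = (1+t)\V$ about the origin. The key point I would establish is that $\CThi(\ell,\cdot)$ is positively homogeneous of degree one under such dilations. Indeed, scaling $\V$ by a factor $s>0$ multiplies every edge length by $s$ while preserving every turning angle $\theta_i$, so by~\eqref{eq:minrad} each $\MinRad^\pm v_i$ scales by $s$; likewise each chord length $d(p,q)$ scales by $s$. Crucially, the index set $\VB(1,\ell)$ of Definition~\ref{def:vb} is determined only by the combinatorial quantity $\vb(p,q)$ and the fixed parameter $\ell$, so it is unchanged as $\V$ is dilated. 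Hence the two inner minima defining $\CThi$ both scale by $s$, giving $\CThi(\ell,(1+t)\V) = (1+t)\,\CThi(\ell,\V)$. Differentiating at $t=0$ yields $D_W \CThi(\ell,\V) = \CThi(\ell,\V) > 0$, which is exactly the constraint qualification.

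In the boundary case $\CThi(\ell,\V) = 1$ one can read the same conclusion off Proposition~\ref{prop:fv} without invoking homogeneity globally: under scaling, each strut contributes the forward derivative $\tfrac{d(p,q)}{2} = 1$ and each kink contributes $\MinRad^\pm v_i = 1$, so the minimum in~\eqref{eq:fv} equals $1 > 0$. Either way $\V$ is constraint-qualified, and the cited Proposition immediately gives that $\V$ is thickness-critical for minimizing $f$, completing the proof.

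There is essentially no difficult step here; the only point deserving care is verifying that dilation genuinely leaves the constraint index set $\VB(1,\ell)$ fixed, so that the value of $\CThi$ scales linearly rather than changing its controlling set, and that the one-sided derivative $D_W \CThi$ agrees with the honest derivative of this scaling curve. Both facts rest on the scale-invariance of turning angles together with the purely combinatorial definition of $\vb(p,q)$, and so present no real obstacle.
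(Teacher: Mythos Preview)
Your proof is correct and follows exactly the approach the paper takes: the paper's one-line argument is simply that ``scaling $\V$ provides the desired motion,'' and you have filled in the details of why dilation gives $D_W \CThi > 0$ (homogeneity of $\MinRad^\pm$ and $d(p,q)$, together with the scale-invariance of the combinatorial index set $\VB(1,\ell)$). Your added care about $\VB(1,\ell)$ remaining fixed under dilation is precisely the point that makes the homogeneity argument go through, so nothing is missing.
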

 We make a final note that in general, our criticality theory works equally well for $\CThi$ and $\PThi$ (even for polygons $\V$ which are not equilateral), as long as they obey the hypotheses of Corollary~\ref{cor:cthi vs pthi}. This is true in practice in all of our numerically computed configurations.

\section{Bridging Theory and Computation}
\label{sec:bridge}

\subsection{Overview of the algorithm}
We have now derived enough theory to describe our algorithm in general
terms. We wish to minimize the function $\Len(\V)$ subject to the
constraint $\CThi(\ell,\V) \geq 1$. We will do so by computing the
constrained gradient $(-\Len \V)_I$ and stepping in this
direction. These steps will reduce $\Len(\V)$ while keeping $\V$ close
to the set $\CThi(\ell,\V) \geq 1$ (since the constraints are nonconvex, we cannot stay
entirely inside this set). When $(-\Len \V)_I$ vanishes, the algorithm
will terminate. By Theorem~\ref{thm:constrainedcritical} if the constrained gradient was exactly zero, the resulting configuration would be a thickness-critical point for minimizing length. We note that our algorithm will attempt to maintain an approximately equilateral polygon $\V$ but it is not required to: constant edgelength $\ell$ is not a hypothesis of Theorem~\ref{thm:constrainedcritical}. Our only caveat is that we must remember that $\CThi(\V)$ may not be equal to $\PThi(\V)$ if the final configuration fails to obey the hypotheses of Corollary~\ref{cor:cthi vs pthi}. We also note that there is nothing special about choosing $\Len(\V)$ as the function to minimize --- both our theory and our code would work just as well for any other function. 

\subsection{Computing the constrained gradient}
\label{sec:computingi}
 To implement this algorithm, we must be able to compute the
 constrained gradient $(-\grad f)_I$. This is a standard problem in
 linear algebra. By definition, if $-\grad f$ is written as $(-\grad
 f)_R + (-\grad f)_I$ using Proposition~\ref{prop:decomposition}, the
 constrained gradient is equal to $(-\grad f)_I$. We can compute that
 by computing $(-\grad f)_R$, which is easy to do since we know the
 generators of the cone $R(\V)$.
 \begin{definition}
   If $\CThi(\ell,\V) = 1$, the \emph{rigidity matrix} $A$ of $\V$ is
   the matrix whose columns are the gradients $-\grad
   \nicefrac{d(p,q)}{2}$ for $(p,q) \in \Strut(\V)$ and $-\grad
   \MinRad^\pm v_i$ for $(v_i,\pm) \in \Kink(\V)$.
 \end{definition}
 We can construct the rigidity matrix by finding the members of
 $\Strut(\V)$ and $\Kink(\V)$. It follows from the definition that
 $R(\V)$ is the image of the positive orthant under the matrix $A$.
 By Proposition~\ref{prop:decomposition}, $(-\grad f)_R$ is the
 closest vector in that image to $-\grad f$. So if we solve the
 non-negative least-squares (NNLS) problem
 \begin{equation}
   \label{eq:nnls-problem}
   \min_{\Lambda \geq 0} \norm{A \Lambda - (-\grad{f})},
 \end{equation}
 then $(-\grad f)_R = A \Lambda$ and $(-\grad f)_I = -\grad f - A
 \Lambda$.  This least-squares problem is a special kind of quadratic
 programming problem which has been well-studied in numerical linear
 algebra (see~\cite{MR1386889}). In our case, the problem is much
 easier because~$A$ is extremely sparse --- the gradients of the
 $\nicefrac{d(p,q)}{2}$ involve no more than $4$ vertices (and so $12$
 variables), while the gradients of the $\MinRad^\pm$ involve only $3$
 vertices (and $9$ variables). So each column of $A$, which is
 typically $1000$ or more entries long, contains at most $12$ nonzero
 entries.

\subsection{The gradient of Length}
We can now compute $(-\grad \Len)_I$ if we can compute $-\grad \Len$,
build the rigidity matrix $A$ from the strut and kink sets, and solve
the NNLS problem in~\eqref{eq:nnls-problem}. We will take these
problems in order.

Length is a differentiable function of polygons $\V \in \R^{3V}$,
whose gradient is given by a straightforward calculation:
\begin{proposition}
  \label{prop:gradlen}
  The gradient of length of a polygon $\V_n$ is given by the
  collection of $n$ vectors
  \begin{equation}
    \nabla \Len(\V)_k = \frac{v_{k-1} - v_k}{\norm{v_{k-1} -
	v_k}} + \frac{v_{k+1} - v_k}{\norm{v_{k+1} - v_k}}.
  \end{equation}
\end{proposition}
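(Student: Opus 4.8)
The plan is to differentiate the length functional edge by edge. I would begin by writing the length as the sum of the Euclidean lengths of the edges,
\[
  \Len(\V) = \sum_{i=1}^{V} \norm{v_{i+1} - v_i},
\]
with indices taken mod $V$. The essential observation is that the vertex $v_k$ appears in exactly two of these summands --- the edge $e_{k-1}$ running from $v_{k-1}$ to $v_k$ and the edge $e_k$ running from $v_k$ to $v_{k+1}$ --- so every other term is constant in $v_k$, and $\grad \Len(\V)_k = \partial \Len / \partial v_k$ is simply the sum of the $v_k$-derivatives of those two terms.

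Next I would record the one ingredient that does the work: for a fixed point $a$ and $x \neq a$, the Euclidean norm $x \mapsto \norm{x - a} = \sqrt{\langle x - a, x - a \rangle}$ is smooth with $\grad_x \norm{x - a} = (x-a)/\norm{x-a}$, a unit vector along the segment joining the two points. Applying this to $\norm{v_k - v_{k-1}}$ and to $\norm{v_{k+1} - v_k}$ --- and using the chain rule in the second case, where $v_k$ enters the argument with the opposite sign --- yields a unit vector along each of the two incident edges. Summing the two contributions gives the displayed formula for $\grad \Len(\V)_k$.

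For a statement this elementary there is no substantive obstacle; the content is a single application of the norm gradient after splitting the sum. The only place to exercise care is the orientation bookkeeping: because $v_k$ sits at the head of $e_{k-1}$ but at the tail of $e_k$, the two chain-rule signs differ, and one must track which endpoint each unit edge vector points toward in order to land on the signs exactly as displayed. I would also note the lone regularity caveat --- the norm fails to be differentiable at the origin --- but this is harmless here, since an embedded polygon has no zero-length edges, so each incident edge length is smooth at $\V$ and the two-term sum is well defined. This is the same gradient that is later fed, with the appropriate sign, into the non-negative least-squares projection of Section~\ref{sec:computingi}.
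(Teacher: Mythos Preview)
Your approach is exactly what the paper has in mind: it offers no proof beyond the phrase ``a straightforward calculation,'' and the calculation you outline --- split $\Len(\V)=\sum_i\norm{v_{i+1}-v_i}$, observe that only the two edges incident to $v_k$ depend on $v_k$, and apply $\grad_x\norm{x-a}=(x-a)/\norm{x-a}$ --- is that calculation.

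One caution on the very point you flag as delicate: if you carry the signs through, $\partial_{v_k}\norm{v_k-v_{k-1}}=(v_k-v_{k-1})/\norm{v_k-v_{k-1}}$ and $\partial_{v_k}\norm{v_{k+1}-v_k}=(v_k-v_{k+1})/\norm{v_{k+1}-v_k}$, so the sum is the \emph{negative} of the displayed expression. The formula as printed in the Proposition is really $-\grad\Len(\V)_k$ (the length-decreasing direction, pointing from $v_k$ toward its neighbors), which is indeed the vector the algorithm feeds into the resolution step. Your method is right; just be aware that the sign in the displayed equation does not match the label $\nabla\Len$.
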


\subsection{The gradient of $\nicefrac{d(p,q)}{2}$}
Given a pair of points $(p,q)$ on $\V$, the gradient of the distance
between them is a set of four vectors located at the endpoints of the
edges on which $p$ and $q$ lie. These vectors are given by a
calculation:

\begin{proposition}
  \label{prop:dpqgrad}
  Suppose that $(p,q) \in \Strut(\V)$. If $p = \alpha v_i + (1-\alpha)
  v_{i+1}$ and $q = \beta v_j + (1-\beta) v_{j+1}$ then
  \begin{equation*}
    \grad \frac{d(p,q)}{2} = \frac{1}{2 d(p,q)} \, \left\{ \alpha (p-q), (1 - \alpha)
    (p-q), \beta (q-p), (1-\beta)(q-p) \right \}.
  \end{equation*}
  where these three vectors are applied to $v_i$, $v_{i+1}$, $v_j$ and
  $v_{j+1}$ in order.
\end{proposition}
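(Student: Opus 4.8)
The plan is to treat this as a direct chain-rule computation, once we are careful about what is held fixed as the polygon deforms. The key convention, consistent with Proposition~\ref{prop:fv}, is that we track the strut endpoints $p$ and $q$ with \emph{fixed} barycentric coordinates: as the vertices move along a variation $W$, we set $p(t) = \alpha v_i(t) + (1-\alpha) v_{i+1}(t)$ and $q(t) = \beta v_j(t) + (1-\beta) v_{j+1}(t)$ with $\alpha$ and $\beta$ held constant. With this convention each strut defines a constraint $\nicefrac{d(p,q)}{2}$ that is an honest function of the four vertex positions $v_i, v_{i+1}, v_j, v_{j+1}$, and the column of the rigidity matrix we want is precisely its gradient in those twelve variables.

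First I would observe that $\nicefrac{d(p,q)}{2} = \nicefrac{1}{2}\norm{p - q}$ is smooth in the vertices wherever $p \neq q$, and that membership $(p,q) \in \Strut(\V)$ forces $d(p,q) = 2 > 0$, so we are safely away from the only singular locus of the Euclidean norm. Next I would differentiate by the chain rule. Writing $g(x) = \nicefrac{1}{2}\norm{x}$, we have $\grad g(x) = \nicefrac{x}{(2\norm{x})}$; combining this with the affine dependence of $p$ on $(v_i, v_{i+1})$ and of $q$ on $(v_j, v_{j+1})$ gives the four blocks. Concretely, the partials of $p$ with respect to $v_i$ and $v_{i+1}$ are scalar multiplications by $\alpha$ and $1-\alpha$ (and $q$ is independent of these), with the symmetric statement for $q$. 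Substituting $x = p-q$ and $\norm{x} = d(p,q)$ then yields the stated vectors $\nicefrac{1}{2d(p,q)}\{\alpha(p-q),\, (1-\alpha)(p-q),\, \beta(q-p),\, (1-\beta)(q-p)\}$ attached to $v_i, v_{i+1}, v_j, v_{j+1}$ respectively.

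The one point that deserves care --- and the closest thing to an obstacle --- is the justification for holding $\alpha$ and $\beta$ fixed. If one instead viewed $\nicefrac{d(p,q)}{2}$ as the \emph{minimum} self-distance over the two edges, the minimizing parameters would themselves depend on $\V$ and the chain rule would acquire extra terms from the sliding of the closest points. Those extra terms vanish to first order exactly because $(p,q) \in \Strut(\V) \subset \dcsd(\V)$ satisfies the criticality conditions $\langle p-q, \gamma'(s)\rangle = \langle p-q, \gamma'(t)\rangle = 0$, so the tangential derivatives are zero (an instance of the envelope theorem). In our framework this subtlety does not even arise: the outer minimum in Proposition~\ref{prop:fv} already ranges over all fixed-coordinate pairs of $\VB(1,\ell)$, so each pair is differentiated with its barycentric coordinates held constant and the selection among pairs is handled by that minimum. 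Hence the fixed-$\alpha,\beta$ computation above is exactly what is required, and no separate optimality argument is needed inside this proposition.
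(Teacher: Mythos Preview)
Your proof is correct and matches the paper's approach: the paper simply states that ``these vectors are given by a calculation'' and does not write out a proof, so your direct chain-rule computation with fixed barycentric coordinates is exactly the intended (and only) argument. Your discussion of why $\alpha$ and $\beta$ may be held fixed is a nice elaboration that the paper leaves implicit in its appeal to Proposition~\ref{prop:fv}.
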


\subsection{The gradient of $\MinRad^\pm$}
As we noted above, the $\MinRad^\pm$ are differentiable where they are
defined. We now compute the gradient on $\MinRad^+$, noting that the
gradient of $\MinRad^-$ is similar.

\begin{proposition}
  \label{prop:mrgrad}
  Given a vertex $i$ on $\V_n$ with finite $\MinRad^\pm(v_i)$,
  we let $n$ denote the oriented normal vector to the plane defined by
  $v_{i-1}, v_i, v_{i+1}$ and define the scalar constant
  \begin{equation*}
    K =  \frac{\norm{v_{i+1} - v_i}}{2 \cos \theta - 2}
  \end{equation*}
  and the vector constants
  \begin{equation*}
    V = \frac{v_{i+1} - v_i}{2\tan(\nicefrac{\theta}{2}) \norm{v_{i+1}
        - v_i}} , \quad 
    W = K \frac{(v_{i-1} - v_i) \cross n}{\norm{v_{i-1} - v_i}^2}, \quad
    X = K \frac{n \cross (v_{i+1} - v_i)}{\norm{v_{i+1} - v_i}^2}. 
  \end{equation*}
  Then if we write the gradient of $\MinRad^+$ as a triple of vectors
  located at $v_{i-1}$, $v_i$, and $v_{i+1}$ we have
  \begin{equation*}
    \grad \MinRad^+(v_i) = \{ W, -W - X - V, X + V \}.
  \end{equation*}
\end{proposition}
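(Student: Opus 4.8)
The plan is to write $\MinRad^+(v_i)$ as a product of two elementary functions of the vertices and differentiate by the product and chain rules. By \eqref{eq:minrad} we have $\MinRad^+(v_i) = \abs{e_i}/(2\tan(\theta_i/2))$, where $\abs{e_i} = \norm{v_{i+1}-v_i}$ is the length of the outgoing edge and $\theta_i$ is the turning angle at $v_i$. Writing $L = \norm{v_{i+1}-v_i}$ and $\theta = \theta_i$, this is $\MinRad^+(v_i) = \frac{1}{2} L \cot(\theta/2)$, so
\begin{equation*}
  \grad \MinRad^+(v_i) = \frac{\cot(\theta/2)}{2}\,\grad L - \frac{L}{4}\csc^2(\theta/2)\,\grad\theta.
\end{equation*}
The whole proof then reduces to computing the two gradients $\grad L$ and $\grad\theta$ as triples of vectors located at $v_{i-1}, v_i, v_{i+1}$, simplifying the scalar prefactors, and matching terms against $V$, $W$, and $X$.

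The scalar coefficients are routine. First I would simplify the coefficient of $\grad\theta$ with the half-angle identity $\csc^2(\theta/2) = 2/(1-\cos\theta)$, which gives $-\frac{L}{4}\csc^2(\theta/2) = L/(2\cos\theta-2) = K$. For the edge-length term, $\grad L$ vanishes at $v_{i-1}$, equals $-(v_{i+1}-v_i)/\norm{v_{i+1}-v_i}$ at $v_i$, and equals $+(v_{i+1}-v_i)/\norm{v_{i+1}-v_i}$ at $v_{i+1}$; multiplying by $\cot(\theta/2)/2$ identifies this contribution with $-V$ at $v_i$ and $+V$ at $v_{i+1}$, where $V$ is exactly the vector constant in the statement.

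The heart of the argument is the gradient of the turning angle. Since $\theta = \pi - \phi$, where $\phi$ is the interior angle of the triangle $v_{i-1}v_iv_{i+1}$ at $v_i$, I would compute $\grad\phi$ from $\cos\phi = \langle a,b\rangle/(\norm{a}\norm{b})$ with $a = v_{i-1}-v_i$ and $b = v_{i+1}-v_i$. A short calculation shows $\grad_{v_{i-1}}\phi$ is $1/\norm{a}$ times the unit vector perpendicular to $a$ in the plane pointing toward $b$, and symmetrically for $v_{i+1}$. Fixing $n$ to be the unit normal in the orientation $(v_{i+1}-v_i)\cross(v_{i-1}-v_i)$, these in-plane perpendicular vectors can be rewritten as $(v_{i-1}-v_i)\cross n/\norm{v_{i-1}-v_i}^2$ and $n\cross(v_{i+1}-v_i)/\norm{v_{i+1}-v_i}^2$; after the sign flip from $\phi$ to $\theta$ and multiplication by $K$, these are precisely $W$ and $X$. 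Translation invariance of $\MinRad^+$ forces the three gradient vectors to sum to zero, so the entry at $v_i$ must be $-W-(X+V)$; assembling the three contributions yields the claimed triple $\{W,\,-W-X-V,\,X+V\}$.

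The main obstacle is the orientation and sign bookkeeping in the turning-angle gradient: one must track the flip from the interior angle $\phi$ to the turning angle $\theta = \pi - \phi$, and must pin down the orientation of $n$ consistently so that the two cross-product expressions carry exactly the signs appearing in $W$ and $X$. Everything else is direct differentiation, and the translation-invariance observation conveniently removes the need to compute the $v_i$ entry separately.
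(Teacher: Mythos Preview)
Your proposal is correct and follows essentially the same approach as the paper: apply the product rule to $\tfrac{1}{2}L\cot(\theta/2)$, identify the $\grad L$ piece with $\pm V$, simplify the coefficient of $\grad\theta$ to $K$ via half-angle identities, and express $\grad\theta$ via cross products with the plane normal to obtain $W$ and $X$. The only cosmetic differences are that the paper carries out the computation in the variables $A=v_{i-1}-v_i$, $B=v_{i+1}-v_i$ and recovers the $v_i$ entry by the chain rule when changing back, whereas you obtain it from translation invariance---an equivalent and arguably cleaner shortcut.
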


\begin{proof}
  The proof is a lengthy calculation. We want to compute the gradient
  of $\MinRad^+(v_i) =
  \frac{\norm{v_{i+1}-v_i}}{2\tan(\nicefrac{\theta}{2})}$, where
  $\theta$ is the turning angle at vertex $v_i$. We start with a
  change of variables. Let $A = v_{i-1} - v_i$ and $B = v_{i+1} -
  v_i$. We can rewrite $\MinRad^+$ in terms of these variables and
  compute its gradient as follows:
  \begin{equation}
    \label{eq:ABgrad}
    \grad \frac{\norm{B}}{2\,\tan(\nicefrac{\theta}{2})} =
    \frac{1}{2\tan(\nicefrac{\theta}{2})} \left( 0, \frac{B}{\norm{B}}
    \right) - \frac{1}{2} \left[ \frac{\norm{B}}{\tan^2
    (\nicefrac{\theta}{2})} \cdot \frac{d}{d\theta}
    \tan(\nicefrac{\theta}{2}) \right] \grad \theta.
  \end{equation}
  Now
  \begin{equation}
    \label{eq:tan-identity}
    \frac{d}{d\theta} \tan(\nicefrac{\theta}{2}) = \frac{1}{2 \cos^2
    (\nicefrac{\theta}{2})} =
    \frac{1}{2 \frac{1 + \cos \theta}{2}} = \frac{1}{1 + \cos \theta},
    \quad \text{ and } \quad \tan^2 (\nicefrac{\theta}{2}) = \frac{1 -
    \cos \theta}{1 + \cos \theta}. 
  \end{equation} 
  So we can rewrite~\eqref{eq:ABgrad} as
  \begin{equation*}
    \grad \frac{\norm{B}}{2\,\tan(\nicefrac{\theta}{2})} =
    \frac{1}{2\tan(\nicefrac{\theta}{2})} \left( 0, \frac{B}{\norm{B}}
    \right) -
    \frac{\norm{B}}{2 - 2 \cos \theta} \, \grad \theta = (0,V) + K
    \grad \theta.
  \end{equation*}
  Keeping track of the sign of the exterior angle, we see that if $n$
  is the oriented unit normal to the plane containing $A$ and $B$, we
  have
  \begin{equation*}
    \grad \theta = \left(\frac{A \cross n}{\norm{A}^2}, \frac{n \cross
    B}{\norm{B}^2} \right) \quad \text{ so } \quad \grad
    \frac{\norm{B}}{2\,\tan(\nicefrac{\theta}{2})} = (W, X + V).
  \end{equation*} 
  Using the definition of $A$ and $B$ to change back to the original
  variables completes the proof.
\end{proof}
The function $\MinRad(v_i)$ provides a discrete analog to the
radius of curvature for the polygonal curve~$\V$ at~$v_i$. Since this
is a numerical computation of a second derivative, we expect the
function to be quite sensitive to small changes in the positions of
the vertices of $\V$. This sensitivity will limit the accuracy of our
computations, so we record an estimate of the norm of the gradient of
$\MinRad^+(v_i)$.

\begin{corollary}
  \label{cor:normgrad}
  If $\V$ is an equilateral polygon with edgelength $\ell$ and
  $\MinRad v_i = 1$ then
  \begin{equation*}
    \norm {\grad \MinRad^{\pm} v_i} \geq \frac{2}{\ell^2}.
  \end{equation*}
\end{corollary}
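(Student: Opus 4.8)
The plan is to discard all but one block of the gradient. By Proposition~\ref{prop:mrgrad} the vector $\grad\MinRad^+(v_i)\in\R^{3V}$ has only three nonzero blocks, located at $v_{i-1}$, $v_i$, and $v_{i+1}$ and equal to $W$, $-W-X-V$, and $X+V$ respectively. (As a sanity check these three sum to $0$, reflecting the translation-invariance of $\MinRad$.) Since the squared Euclidean norm on $\R^{3V}$ is the sum of the squared block norms,
\begin{equation*}
  \norm{\grad\MinRad^+(v_i)} \geq \norm{-W-X-V} = \norm{W+X+V},
\end{equation*}
so it suffices to show that the block at the central vertex has norm \emph{exactly} $\nicefrac{2}{\ell^2}$; the two flanking blocks can only strengthen the bound.

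To evaluate $W+X+V$ I would work in the plane through $v_{i-1},v_i,v_{i+1}$, which contains all three vectors: with $A=v_{i-1}-v_i$ and $B=v_{i+1}-v_i$, the vector $V$ is a multiple of $B$, while $W$ and $X$ are multiples of $A\cross n$ and $n\cross B$, i.e.\ of $A$ and $B$ rotated by $90^\circ$ within the plane by the oriented normal $n$ of Proposition~\ref{prop:mrgrad}. Choosing planar coordinates with $\hat B$ along the first axis and $n$ out of the plane, the equilateral hypothesis $\norm A=\norm B=\ell$ gives $\norm W=\norm X=\nicefrac{1}{2(1-\cos\theta)}=:m$ and $\norm V=\nicefrac{1}{2\tan(\nicefrac{\theta}{2})}$, each with an explicit direction. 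Note also that for an equilateral polygon the two edges at $v_i$ have equal length, so $\MinRad^-(v_i)=\MinRad^+(v_i)=\MinRad(v_i)$.

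The remaining hypothesis then enters as $\MinRad^+(v_i)=\nicefrac{\norm B}{2\tan(\nicefrac{\theta}{2})}=1$, that is $\tan(\nicefrac{\theta}{2})=\nicefrac{\ell}{2}$, whence the half-angle formulas give $\sin\theta=\nicefrac{4\ell}{4+\ell^2}$, $\cos\theta=\nicefrac{4-\ell^2}{4+\ell^2}$, and $m=\nicefrac{4+\ell^2}{4\ell^2}$. Feeding these into the coordinate expression for $W+X+V$ produces two clean cancellations: the component along $\hat B$ collapses to $\nicefrac{1}{\ell}-m\sin\theta=0$, while the transverse component collapses to $m(1+\cos\theta)=\nicefrac{2}{\ell^2}$. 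Hence $\norm{W+X+V}=\nicefrac{2}{\ell^2}$, and combined with the block inequality this proves the claim for $\MinRad^+$; the case of $\MinRad^-$ is identical with the two edges at $v_i$ interchanged, using $\MinRad^-(v_i)=1$.

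I expect the only real work to be the trigonometric and sign bookkeeping of the last step — reexpressing $m$, $\sin\theta$, and $\cos\theta$ through $\tan(\nicefrac{\theta}{2})=\nicefrac{\ell}{2}$ and verifying that the orientation of $n$ inherited from Proposition~\ref{prop:mrgrad} is exactly the one that makes the tangential part vanish. There is no conceptual obstacle: the vanishing of that component, and the fact that one gets equality $\norm{W+X+V}=\nicefrac{2}{\ell^2}$ rather than a mere inequality, are forced by the geometry, so the computation essentially confirms the anticipated answer.
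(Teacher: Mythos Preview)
Your argument is correct and takes a slightly different route from the paper. Both proofs bound the full gradient from below by the norm of a single block, but you choose the block at the central vertex $v_i$ (namely $-(W+X+V)$), whereas the paper uses the block $W$ at $v_{i-1}$. Your choice is the sharper one: the trigonometric simplification you outline goes through exactly as stated --- with $\tan(\nicefrac{\theta}{2})=\nicefrac{\ell}{2}$ one has $m\sin\theta=\nicefrac{1}{\ell}$ and $m(1+\cos\theta)=\nicefrac{2}{\ell^2}$, so the $\hat B$-component cancels and $\norm{W+X+V}$ equals \emph{exactly} $\nicefrac{2}{\ell^2}$. By contrast the paper's block gives $\norm{W}=\nicefrac{1}{2(1-\cos\theta)}=\nicefrac{2}{\ell^2(1+\cos\theta)}$, which depends on $\theta$ and in fact falls below $\nicefrac{2}{\ell^2}$ when the turning angle is acute; your argument sidesteps this issue entirely.

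Your caveat about the orientation of $n$ is well placed but ultimately harmless for the inequality: if the sign were reversed, the $\hat B$-component would be $\nicefrac{2}{\ell}$ rather than $0$ while the transverse component remains $\pm\nicefrac{2}{\ell^2}$, so $\norm{W+X+V}\geq\nicefrac{2}{\ell^2}$ holds either way. The exact equality, and the pleasant interpretation that the central block is purely normal to the edge $B$, do require the correct orientation from Proposition~\ref{prop:mrgrad}.
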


\begin{proof}
  Consider 
  \begin{equation*}
    \norm{W} = \frac{\norm{v_{i+1} - v_i}}{\abs{2 \cos \theta - 2}}
    \frac{\norm{(v_{i-1} - v_i) \cross n}}{\norm{v_{i-1} - v_i}^2}.
  \end{equation*}
  Since the polygon is equilateral, and $n$ is a unit vector normal to
  $v_{i-1} - v_i$, this is just $\norm{W} = \nicefrac{1}{\abs{2 \cos
  \theta - 2}}$. If $\MinRad = 1$, then (squaring $\MinRad$ and using
  both half-angle formulae for tangent) we see that $\norm{W} =
  \nicefrac{\abs{2 + 2 \cos \theta}}{\ell^2}$.  Since $W$ appears
  alone in the formula for $\grad \MinRad^+$, this is a lower bound
  for the norm of the entire gradient.
\end{proof}

\section{Program design}
\label{sec:programdesign}

\subsection{Issues of scale}
The design and implementation of our algorithm \ridgerunner\ were
shaped by the scale of the knot-minimizing problems we intended to
solve and the amount of computer power we had on hand to solve them.
To inform the discussion that follows, we will now take a moment to
consider the dimensions of our problems. In a typical run, we started
by minimizing the length of a low-resolution version of our knot or
link with $2$ vertices per unit of ropelength ($80$ to $150$ vertices). 
Once that configuration was
minimized, a medium resolution run at $4$ vertices per unit of ropelength was
performed. A final run followed at $8$ vertices per unit ropelength. Most of
the runtime was spent during the final run, which took $20-40$ CPU hours on a desktop computer. During the final run, the
average edgelength $\ell$ for our curves was approximately \AverageEdgelength, which meant that there were \AverageEdges\ edges. The average size of the strut set
was \AverageStruts\ pairs of points, while the average size of the kink set was \AverageMrlocs\ vertices. The rigidity matrix was then on average a $\MatrixDims$ matrix which was \MatrixSparsity\ sparse (no more than \MatrixNNZ\ of its \MatrixEnts\ entries were nonzero). A typical run contained several hundred thousand steps.

\subsection{The algorithm} Our method is based loosely on the method of
constrained gradient descent. The basic idea is to generate a series
of polygons $\V_i$ which converge to a limit polygon which is
thickness-critical for minimizing a function $f(\V)$ by taking a
series of steps in the form
\begin{equation}
  \V_{k+1} = \V_k + \alpha (-\grad f)_I, \quad \text{ where $\alpha$
    is chosen by a search algorithm.}
\end{equation}
When $\CThi(\ell,\V) > 1$, this is just the method of steepest
descent, since $(-\grad f)_I = -\grad f$. When $\CThi(\ell,\V) = 1$,
these steps are tangent to the boundary of $\CThi(\ell,\V) \geq 1$ and
in principle decrease $\CThi$ by no more than $O(\alpha^2)$. In some
circumstances, such as when two sections of tube touch for the first
time, we can decrease $\CThi$ by $O(\alpha)$ (which is much larger,
since $\alpha << 1$). We control this error by searching for an
$\alpha$ which keeps $\CThi(\ell,\V_k + \alpha (-\grad f)_I)$ within
acceptable bounds.  When $\CThi(\ell,\V_k)$ becomes too small, we
correct the accumulated error using a Newton's method-type solver. The
code terminates when we the constrained gradient is small enough to
convince us that we are near a point which is thickness-critical for
minimizing $f$. This procedure is summarized in Algorithm~\ref{alg:outline}.

\LinesNumbered

\SetKwData{maxstep}{MaxStep}
\SetKwData{maxerr}{MaxErr}
\SetKwData{steperr}{StepErr}
\SetKwData{err}{Err}

\begin{algorithm}
  
  \SetKwInOut{Input}{input} 
  \SetKwInOut{Output}{output} 
  
  \BlankLine
  
  \Indp 
  
  \Input{A polygon $\V_0$ and an error bound error bound $\maxerr$.}

  \Output{A sequence of positions $\V_{k}$ with
     $\CThi(\ell,\V_{k}) \geq 1 - \maxerr$.}
  
  \BlankLine
  
  \Repeat{$\nicefrac{\norm{(-\grad f)_I}}{\norm{-\grad f}}$ is
    sufficiently small }{ 
    \Indp 
    Compute $-\grad f = -\grad \Len(\V_{k}) + - \grad \Eq(\V_{k})$\; 
    Find $\Strut(\V)$ and $\Kink(\V)$ and construct the rigidity matrix $A$\; 
    Compute constrained gradient $(-\grad f)_I$.\;
    
    Search for $\alpha$ so that $\V_k + \alpha(-\grad f)_I$ minimizes ropelength 
    and	is computationally acceptable and set $\V_{k+1} = \V_k + \alpha (-\grad f)_I$\;

    \If{$\CThi(\ell,\V_{k+1}) < 1 - \maxerr$}{       
      \Indp
      
      Correct $\CThi(\ell,\V_{k+1})$ by Newton's method\; }
    
  }
  \caption{The outline of the \ridgerunner\ algorithm.}
  \label{alg:outline}
\end{algorithm}
In the rest of this section, we will comment on each of these steps in
turn. 

\subsection{Step 2. Equilateral polygons, $\CThi$ and $\PThi$}
We have only proved that $\CThi \geq 1 \iff \PThi \geq 1$ for
equilateral polygons. It is therefore important that our $\V_k$ remain
at least approximately equilateral during a run. We enforce this
constraint by defining a penalty function $\Eq(\V)$ which is minimized
when $\V_k$ is equilateral and minimizing the sum $\Len(\V) +
\Eq(\V)$. This is quite effective (a typical run recorded an average
error in edgelength of about $0.385\%$) in practice. We note that
while $\CThi$ and $\PThi$ might not be equal for nonequilateral
polygons, we avoid any problems that might result by performing all of
our final ropelength calculations with respect to the original $\PThi$
thickness.

\subsection{Step 3. Finding $\Strut(\V)$ and $\Kink(\V)$}
In principle, the strut and kink sets could be found by direct
inspection of all pairs of edges and all vertices of $\V$. But since
there are usually $10^6$ such pairs, this naive method consumes too
much runtime. So to find the strut and kink sets, we used the
clustering code \texttt{octrope} of Ashton and Cantarella described
in~\cite{MR2197947}. This was fast enough that over $30$ seconds of a
typical\footnote{a $400$ edge $5.1$ knot with about $600$ struts}
run about $10\%$ of runtime was spent finding $\Strut(\V)$ and
$\Kink(\V)$.  The algorithm in \octrope\ does not take advantage of
the fact that it is called successively on data which vary little
between calls, so a much faster customized strut-finding code could be
written into \ridgerunner. However, these figures show that this
project would have little impact on overall performance.

\subsection{Step 4. Finding the constrained gradient}
\label{sec:tsnnls}
Once we have $\Strut(\V)$ and $\Kink(\V)$ we can use the gradient
formulae given in Propositions~\ref{prop:dpqgrad} and~\ref{prop:mrgrad}
to construct the rigidity matrix $A$. We must then solve the sparse non-negative least squares (SNNLS) problem $\min_{\Lambda \geq 0} \norm{A \Lambda - (-\grad{f})}$, which we recall as Equation~\vref{eq:nnls-problem}.
  
We use the freely available \tsnnls\ library of Cantarella,
Piatek, and Rawdon~\cite{tsnnls}, which is an implementation of the block-pivoting
algorithm of Portugal, Judice and Vicente~\cite{MR95a:90059}.  The PJV
algorithm solves a sequence of unconstrained least-squares problems to
find a partition of the variables of~$\Lambda$ into complementary sets
$F$ and $G$ representing variables which will be nonzero and zero in
the solution to~\eqref{eq:nnls-problem}. It is very important to take
advantage of the sparsity of $A$ in order to solve these (rather
large) problems in an acceptable amount of time, as this step makes
the dominant contribution to our overall runtime in most cases. To
this end, \tsnnls\ solves the least-squares problem $Ax = b$ by
solving the ``normal equations'' $A^T A x = A^T b$. Since $A^T A$ is
symmetric, we can solve this system using a Cholesky
factorization. This is done very quickly using the multifrontal
supernodal sparse Cholesky code \texttt{TAUCS} of Toledo
et\,al.\,~\cite{taucs}.

We have sacrificed some accuracy in favor of speed, since the
condition number of $A^T A$ is the square of the condition number
of~$A$.  A standard ``rule of thumb'' in such situations is that the
error in the solution is on the order of machine epsilon ($10^{-16}$)
multiplied by condition number. To verify that this was small in
practice, we used the \texttt{rcond} function in \texttt{LAPACK} to estimate the
condition number of the rigidity matrices of all of our final
configurations. The average condition number was on the order of
$10^{4}$ with none being worse than $8 \times 10^{5}$. Thus we expect to have
an average error on the order of $10^{-8}$ and a worst-case error of $10^{-6}$ 
in our final computations of the constrained gradient.

It is also worth noting that the \texttt{TAUCS} code will fail if the rigidity matrix is singular, which will occur when there is more than one way to balance gradient force. This is expected for very complicated knots, but seems to be rare among knots in our dataset. A more advanced version of \tsnnls\ would calculate a minimum-norm solution to the least-squares problem in this case.

\subsection{Step 5. Choosing a stepsize} 
When $\CThi(\V) > 1$ our code sets a small maximum stepsize of
$10^{-2}$ and proceeds by Euler integration\footnote{We could improve
  the accuracy and speed of this portion of the computation by using a
  smarter ODE solving method. But these steps have no linear algebra
  involved, so they are already orders of magnitude faster than the ones
  to come. In practice, this portion of the run consumes $<1\%$ of the
  total runtime.}. Once $\CThi(\V) = 1$, thickness typically decreases by
a small amount on each step. We choose $\alpha$ by a line search algorithm, 
finding the minimum ropelength of configurations in the given direction 
using Brent's method with a relatively low precision. 

However, we do not always accept the ropelength-minimizing $\alpha$. Instead, we apply a collection of ad hoc conditions which we describe as $\alpha$ being ``computationally acceptable''. These include an upper bound on stepsize of $10^{-2}$, a lower bound of $10^{-6}$, and the requirement that the linear algebra solver of Step 4 can compute a new direction $-\grad f_I$ at the new location. These are motivated by several practical considerations. If the stepsize is permitted to be too large, loose configurations will often form large kinked regions before the tube contacts itself. Kinks reduce stepsizes by orders of magnitude-- in practice, this means that such a run takes an unacceptably long time to converge. If the stepsize is permitted to be too small, the solver can stall just before discovering a new self-contact. In these cases it has proved better to take the risk of a slight increase in ropelength in order to improve the strut set. Finally, even when the stepsize is less than $10^{-2}$, if an arc of the knot suddenly contacts another arc, introducing too many new struts into the rigidity matrix, the matrix can become numerically singular, defeating the \tsnnls\ solver of Step~4. Thus, we must look ahead and make sure the next position will be acceptable to \tsnnls\ before locking in a stepsize.
%
%

\subsection{Step $7$. Error correction}
\label{sec:error}
When the error bound $\maxerr = 10^{-4}$ is reached, we use Newton's
method to return $\V_k$ to a configuration with larger thickness. For
any given variation $W$ of $\V$ we can estimate the change in the
$\nicefrac{d(p,q)}{2}$ for $(p,q) \in \Strut(\V)$ and in
$\MinRad^{\pm} v_i$ for $(v_i,\pm) \in \Kink(\V)$ by $A^T W$, where
$A$ is the rigidity matrix we have already computed.

We use this observation in a straightforward way. We construct a
vector $C$ of desired corrections which is equal to $(1 -
\nicefrac{\maxerr}{2}) - \nicefrac{d(p,q)}{2}$ for $(p,q) \in
\Strut(\V)$ and $(1 - \nicefrac{\maxerr}{2}) - \MinRad^{\pm} v_i$ for
$(v_i,\pm) \in \Kink(\V)$.  Having done so, we find a minimum-norm
solution to $A^T W = C$. We then step according to $W$, using a search algorithm to decide the stepsize,  rebuild the
rigidity matrix in case we have changed the strut or kink set in the
correction step, and iterate.

We note that we do not attempt to correct all of the error in $\CThi(\V)$
during this procedure. If we did so, we would risk losing struts and
kinks when we rebuild the rigidity matrix. In that case, the next
Newton step, ignoring those pairs or vertices, might rediscover them as
struts and kinks. In principle, this cycling behavior could delay or
prevent convergence of the Newton procedure, as noted by
Fletcher~\cite{MR1867781}. Our method does not eliminate this
possibility entirely (in the current version of the code, we have observed occasional failures of the Newton solver) but in practice the Newton solver almost always converges in
only a few iterations.

The main problem with the Newton solver is that it is slow for large
problems.  The matrix $A^T$ is mapping from a high-dimensional space
of variations to a relatively low-dimensional space of struts and
kinks, so it has a large kernel. Hence the matrix $A A^T$ is not
positive definite, and so we cannot solve $A^T W = C$ using the method
of normal equations and the fast Cholesky decomposition of
\texttt{TAUCS}. Instead, we must use the older \texttt{lsqr} code of
Paige and Saunders~\cite{355989} to find a minimum-norm solution to
the problem.  This can be very slow. For instance, in a $640$ edge
trefoil with $975$ struts and $10$ kinks, correction steps
consumed anywhere between $3$ and $25$ seconds of runtime. Normal
steps completed in less than a second. We always have the option of sidestepping Newton correction by simply scaling the knot (as in Pieranski's SONO algorithm). This preserves ropelength but destroys the strut set completely, requiring us to rebuild the strut set during subsequent steps. Our experience has been that this can improve performance during the middle stages of a run, when a fairly large number of struts and kinks have formed but the knot is still far from tight, but it is better to use Newton correction in the final stages of a run when one is trying to adjust a converged strut set to improve the final results.

At the moment, the speed of \texttt{lsqr} controls the overall performance of our code. We hope to find an improved error-correction procedure in future versions of the software.

\subsection{Modified versions of the algorithm} We have also modified our algorithm to handle some special cases, such as open curves with fixed endpoints or endpoints constrained to lie in planes. In these cases, the gradients of the endpoint constraints are added to the rigidity matrix and the gradient of length is resolved against them in Step 4. In addition, a specialized error-correction algorithm enforces the constraints after each step to prevent numerical error from causing the endpoints to drift away from their positions over time. The general Newton's method algorithm for error-correction is also modified in these cases to take endpoint constraints into account. 

In addition, we have found that curves whose final tight positions have long segments with no struts or kinks as well as tightly curved regions with many struts and kinks often take a very large number of steps to tighten completely. Sections of the curve with no struts or kinks simply minimize length with no constraints and must therefore end up as straight lines. But as they approach this position, the gradient of length approaches zero, while regions where the gradient of length is balanced by struts and kinks have comparatively large length gradients. Since the step size is controlled by the tightly curved regions, it may take a very long time for the strut and kink-free regions to finish straightening. We have had some success in these cases with a modified version of our algorithm which detects sections of curve with no struts or kinks and scales up the length gradient on those portions of the curve alone.

\section{Results of Computations}

 We now present the main results of our computations. To summarize, we
 have significantly extended the range and quality of existing
 computations of tight knots and links.  The new data support some
 interesting conjectures about the geometric structure of these
 configurations.

 \subsection{Validation of \ridgerunner\ computations}
 To verify that the system works, we checked the results of
 \ridgerunner\ against some theoretical results. The results of the
 comparison appear in Table~\ref{tab:validation}.  As we can see from
 the Table, the relative error in these ropelength computations is
 as small as $0.0017\%$.

 The paper~\cite{MR2284052} also gives an explicit strut set for the
 Borromean rings. To compare the numerically computed strut set to the
 theoretical one, we plot them together in
 Figure~\ref{fig:borrostruts}. The Figure shows that the numerically
 computed strut set is quite close to the actual one.
 Figure~\ref{fig:claspstruts} shows a similar comparison between theoretical
 results and a \ridgerunner\ computation for the strut set of the
 ``simple clasp'' formed by two strands looped over one another. The theoretical results in~\cite{MR2284052} for this clasp assume that the curvature of the clasp is not bounded, so we compare with the results of a run of our software which did not enforce curvature constraints.
 
 \FPeval{twooneresult}{100*((25.1388 - 25.132741228718)/(25.132741228718))}
 \FPround{\twooneresult}{\twooneresult}{2} 
 
 \FPeval{sixthreetworesult}{100*((\SixThreeTwoRRUB - 58.006)/(58.006))}
 \FPround{\sixthreetworesult}{\sixthreetworesult}{4} 
 
 \FPeval{simplechainresult}{100*((\SimpleChainRRUB - 41.6991)/(41.6991))}
 \FPround{\simplechainresult}{\simplechainresult}{2} 
 
 \begin{table}[ht]
   \begin{tabular*}{5.7in}{p{1in}p{0.8in}p{1in}p{0.8in}p{1.4in}} \toprule
     & \hspace{-0.12in} \includegraphics[width=0.6in]{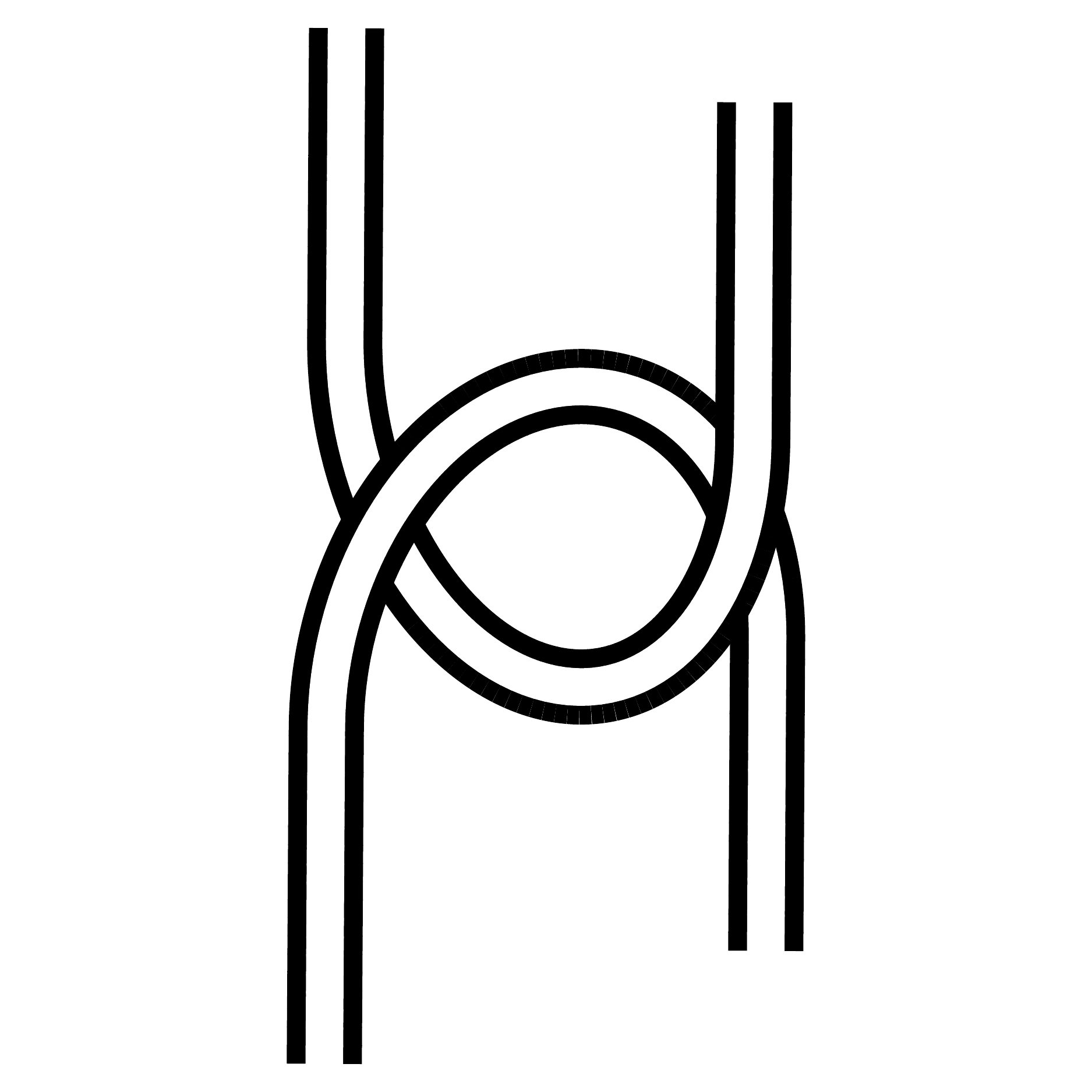} &  \includegraphics[width=0.5in]{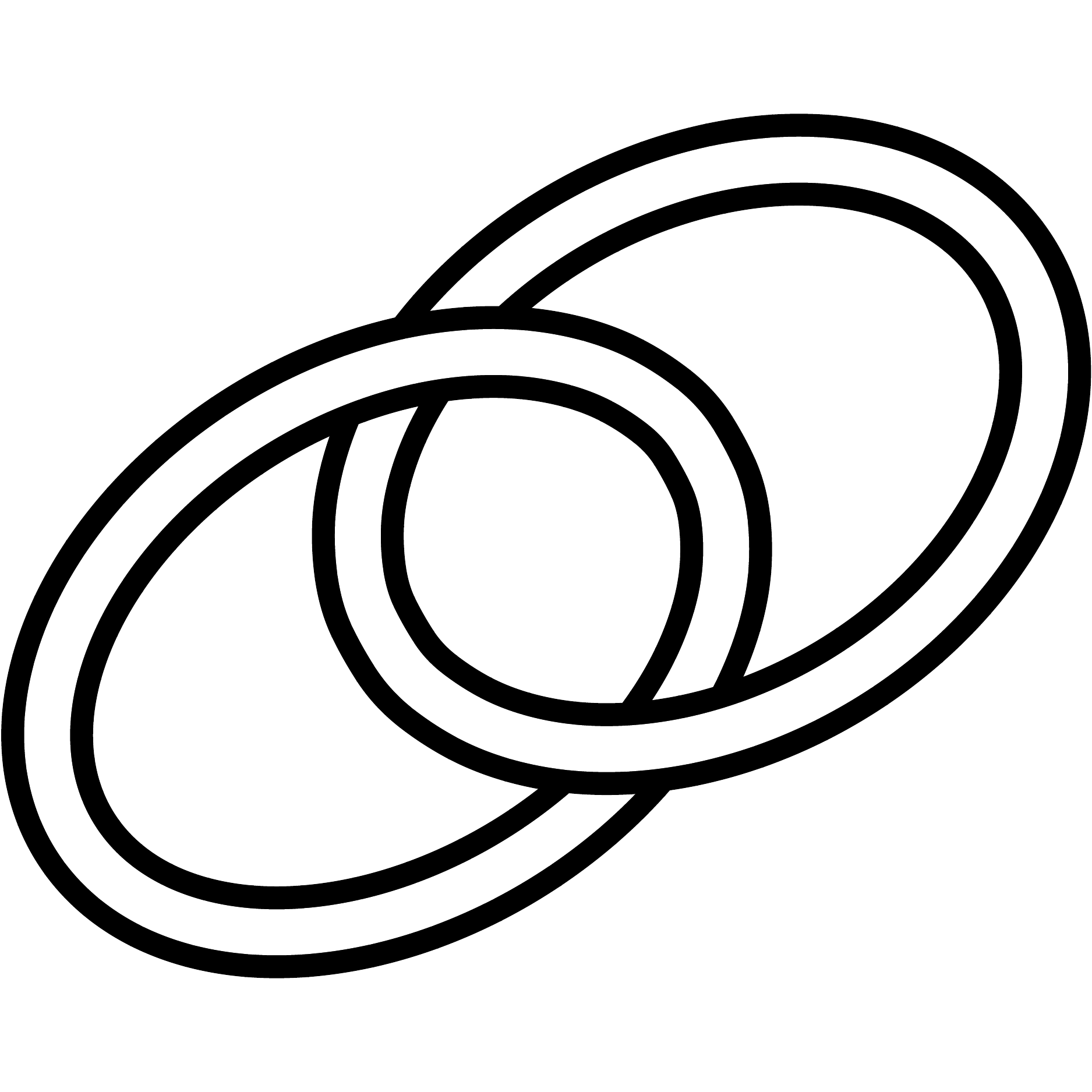} &  \includegraphics[width=0.6in]{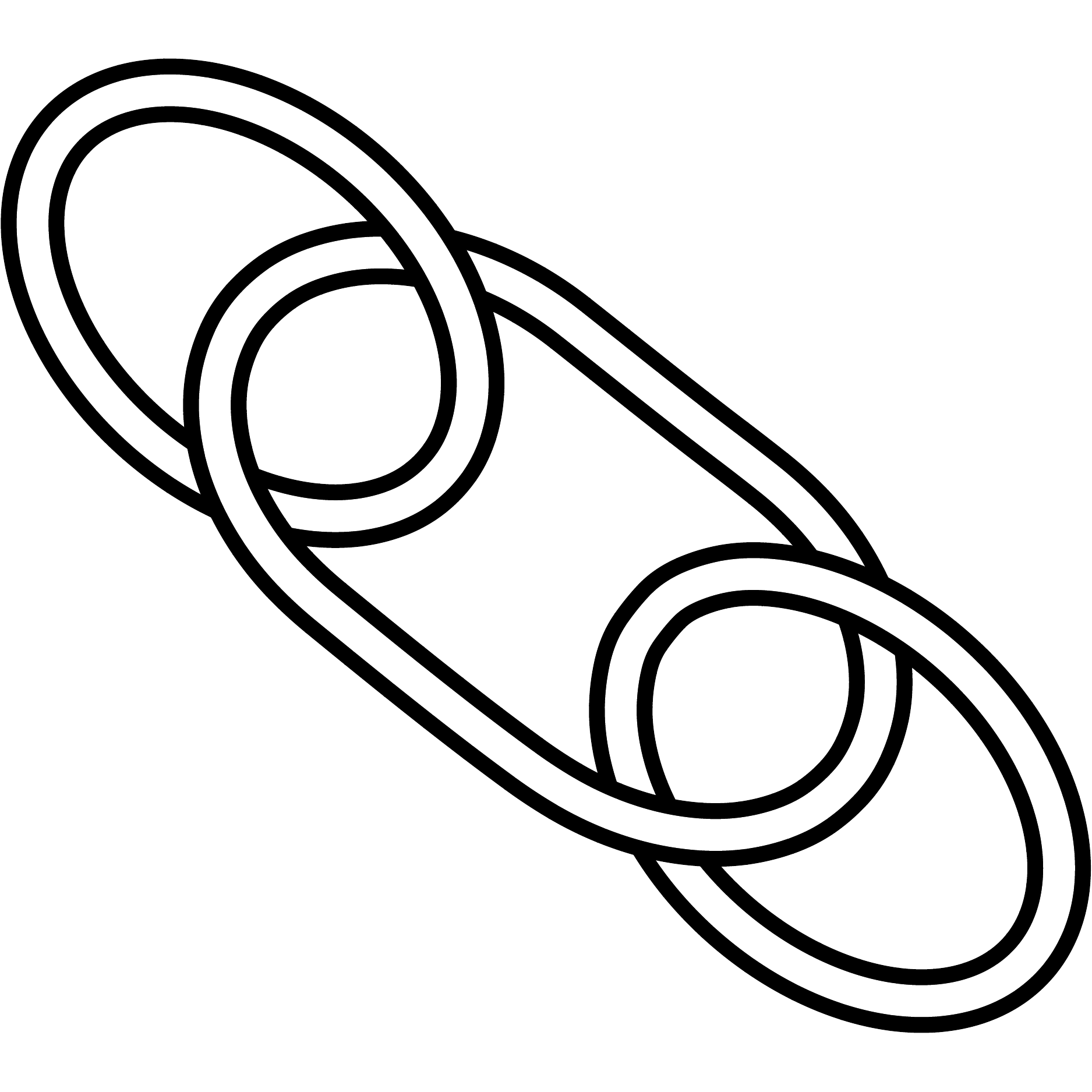} &  \includegraphics[width=0.6in]{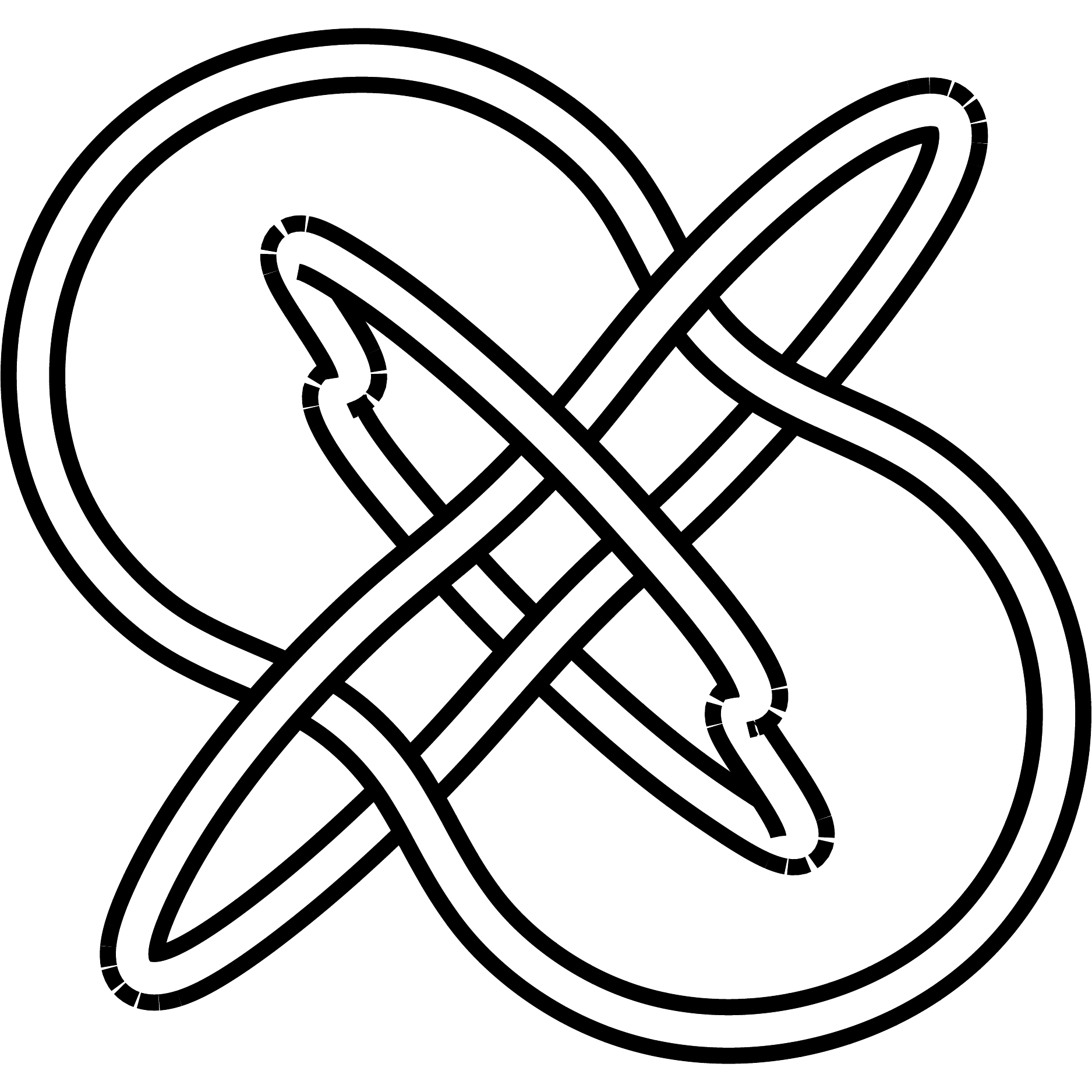} \\ \midrule  
     Link name 	& Clasp 		& Hopf link ($2^2_1$) 	& $2^2_1 \# 2^2_1$ & Borromean rings ($6^3_2$) \\
     Vertices 		& $332$		& $216$		& $384$		& $930$	\\
     $\PRop$ bound    & $4.2841$	& $25.1406$	& $41.7131$	& $58.0192$\\ 
     $\Rop$ bound       & $4.2837$  & \TwoTwoOneRRUB   & \SimpleChainRRUB & \SixThreeTwoRRUB  \\ 
     Smooth length	& $4.2629$\cite{MR2284052} 	& $8\pi$\cite{MR2003h:58014} 
                        & $12\pi+4$\cite{MR2003h:58014}	& $58.0060$\cite{MR2284052}\\
     Relative error	& $0.4\%$ 	& $\twooneresult\%$	& $\simplechainresult\%$	& $\sixthreetworesult\%$ \\ \bottomrule
   \end{tabular*}
   \bigskip

   \caption{Numerical results from \ridgerunner\ compared to the
   minimum ropelength values from~\cite{MR2003h:58014} and~\cite{MR2284052}. The relative
   errors in the computations are quite small.}

   \label{tab:validation}
 \end{table}

\begin{figure}
\begin{center}
  \begin{overpic}[width=4.5in,viewport= 140 140 480 530]{figs/{6_3_2_hires.stplot}.pdf}
  \end{overpic}\\
  \vspace{0.25in}
  \begin{overpic}[width=4.25in,viewport= 40 0 320 80]{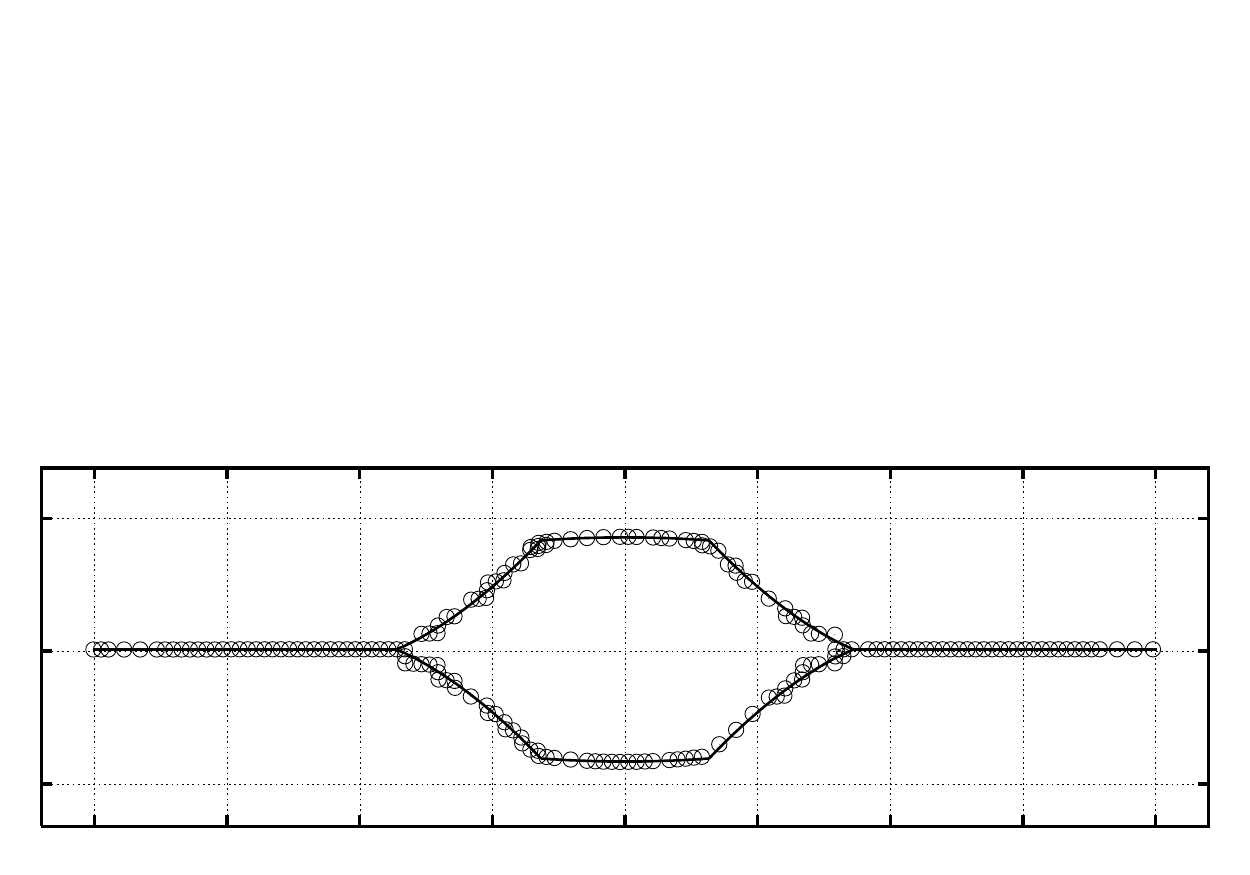}
  \end{overpic}
\end{center}

\caption{The diagonal above is labeled with arclength values along the three components of the Borromean rings link, which is numbered $6^3_2$ in Rolfsen's table. Every pair $(s,t) \in \Strut(\V)$ is represented by a dark green square centered on $(s,t)$.  As we see from the top plot, no tube around a component of the link is in contact with itself (so the three triangles near the diagonal are
 empty). But each of the components makes contact with the other two,
 as shown by the boxes plotted in the rectangles forming the remainder of the plot. We can see that the
 contacts break up naturally into ``lantern-shaped'' structures.
 In the bottom plot, we compare one ``lantern'' to the self-contact set predicted by~\cite{MR2284052}, which
 is represented by a black line.}
\label{fig:borrostruts}
\end{figure}

 \begin{figure}
   \begin{center}
     \begin{overpic}[width=1.5in]{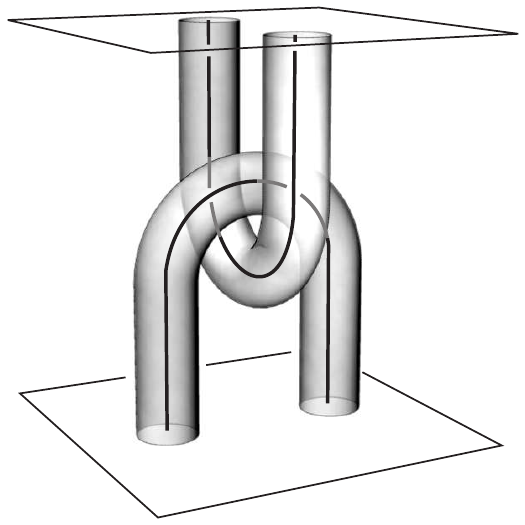}
     \end{overpic}
     \hspace{0.5in}
     \begin{overpic}[width=1.5in]{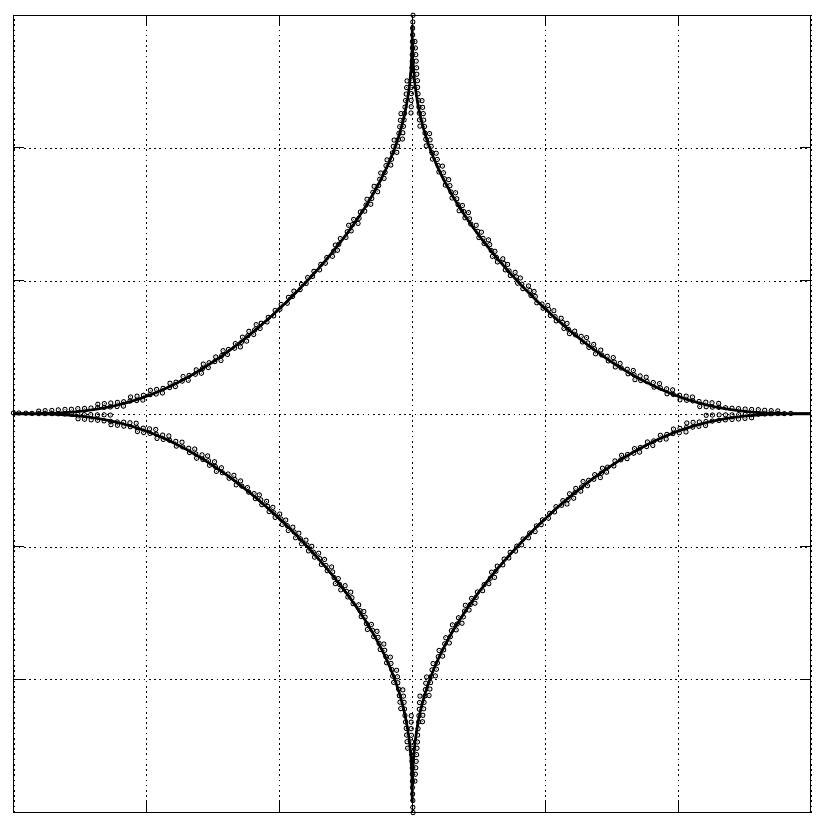}
     \end{overpic}
     \hspace{0.5in}
     \begin{overpic}[width=1.5in]{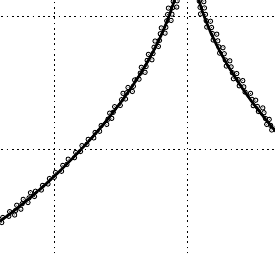}
     \end{overpic}
   \end{center}
   \caption[Computational and theoretical strut sets for the clasp]
	   {The left-hand picture shows a (loose) configuration of the
	   ``simple clasp'' --- a simple two-strand tangle which serves
	   as an interesting model for the interaction between two
	   ropes passing over each other at right angles. A
	   ropelength-critical configuration of this tangle has been
	   derived and studied extensively in~\cite{MR2284052} and~\cite{cfksw2}. Since
	   this derivation included an explicit strut set, it is
	   natural to compare \ridgerunner's results to this
	   theoretical picture.	     
	   This comparison is shown in the two plots center and right,
	   which plot the positions of struts in arclength coordinates
	   with the origin located where each curve first begins to
	   turn. The enlarged plot (right) shows the agreement between
	   theoretical and computational results. The data shown is from a 332 edge polygonal clasp.}
	   \label{fig:claspstruts}
 \end{figure}

\subsection{Computing polygonal ropelength minimizers for many knots and links}
We minimized polygonal ropelength for all prime knots of 10 and fewer crossing and all prime links of 9 and fewer crossings (a total of \FileCount\ knot and link types) at
resolutions of at least $8$ vertices per unit of ropelength (several hundred vertices in total). For a few knots and links of special interest, we computed high resolution runs with 16, 32, or 74 vertices per unit ropelength. The largest runs in our dataset contain about 2400 vertices. 

The computations were performed on clusters at the University of St.\,Thomas, the University of Georgia, and the ACCRE cluster at Vanderbilt University. We began our computations with an initial
low-resolution (200 vertices or fewer) polygon, which we ran until
the residual was sufficiently low. We then increased resolution by
a minrad-preserving version of spline interpolation and minimized again from the resulting new starting configurations. Our inital goal was a residual less than $0.01$, which we achieved for \DoubleOOneCount\ of the \FileCount\ knots and links in our data set. We were able to reach a residual of $0.001$ for \TripleOOneCount\ of the knots and links in our data set, proving that our knots are close to being critical for the $\CThi$ thickness. While our knots are not quite equilateral, they all satisfy the hypotheses of Corollary~\ref{cor:cthi vs pthi} and are hence also close to critical for the original $\PThi$ thickness. Because of this corollary, we know that both thicknesses are equal for our configurations, so we have computed and reported the $\PThi$ thickness and ropelength below.

We started each knot from at least five initial configurations, including the configurations from KnotPlot~\cite{knotplot} (similar to the configurations in Rolfsen's table), the TOROS simulated annealer~\cite{MR2034393}, Gilbert's minimized configurations from the online~\emph{Knot Atlas}~\cite{gilbert}, hand-drawn configurations from Kawauchi's \emph{A Survey of Knot Theory}~\cite{MR1417494}, and positions generated from KnotPlot's \texttt{diagram} command. The results shown describe the lowest ropelength we achieved from any of these starting configurations.

The polygonal ropelengths for our curves appear in the column $\PRop$ of Tables~\ref{ByKnotTableone}-\ref{ByKnotTablethree} of Appendix~\ref{ropdata}, while a plot of the ropelengths organized by crossing number appears in Figure~\ref{fig:data}.

\subsection{Generating upper bounds for smooth ropelength}
\label{sec:RRUB}
Our computations yielded a large set of approximate minimizers of
$\nicefrac{\Len(\V)}{\PThi(\V)}$. From these, we wanted to generate
upper bounds on the minimum (smooth) ropelength of these knots and
links. Rawdon has given general bounds~\cite{MR1702029, MR2034393} on
the rate at which $\PThi \rightarrow \Thi$ which we could have used
for this purpose.  But we were interested in small improvements in
ropelength, so we used a more careful approach.

Our procedure for constructing smooth ropelength bounds from polygonal
data is as follows. Beginning with $\V$, we splice circle arcs of
radius $\MinRad(v_i)$ into the corners at vertices $v_i$ as shown on
the left-hand side of Figure~\ref{fig:splice-and-distbound} to create
a piecewise $C^2$ curve $V(s)$.  The minimal radius of curvature for
this curve is equal to $\MinRad(\V)$. But the self-distances of $V(s)$
may be different from those of the polygon $\V$ if they involve the
new circle arcs.

\begin{figure}[h]
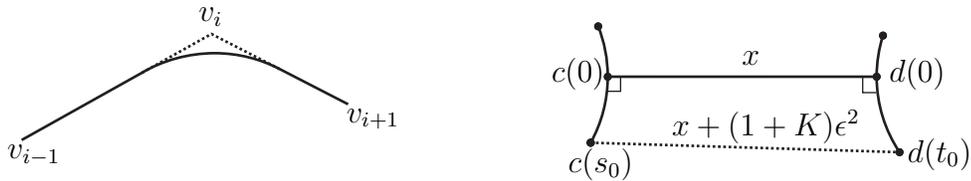

  \begin{center}
    \begin{overpic}{splice} 
      \put(1,2){$v_{i-1}$}
      \put(54,40){$v_i$}
      \put(95,12){$v_{i+1}$}
    \end{overpic}
    \hspace{1in}
    \begin{overpic}{distbound}
      \put(-6,25){$c(0)$}
      \put(95,25){$d(0)$}
      \put(51,30){$x$}
      \put(-1,-2){$c(s_0)$}
      \put(100.5,1){$d(t_0)$}
      \put(30,9){$x + (1 + K)\epsilon^2$} 
     \end{overpic}
  \end{center}
  \caption[Splicing a circle into a polygon]{On the left, we see the
    curve constructed from splicing a circular arc of radius
    $\MinRad(v_i)$ into $v_{i-1} v_i v_{i+1}$. This curve is $C^1$, but
    not $C^2$ at the splice points. On the right, we see the setup for
    Proposition~\ref{prop:approx}. On the left and right are arcs
    $c$ and $d$ with curvature $\leq K$ and length $\leq
    \epsilon$. The minimum distance $x$ between them occurs at
    $c(0)$, $d(0)$. We prove that the distance between any other
    pair of points $c(s_0)$ and $d(t_0)$ is bounded above by $x +
    ( 1 + K)\epsilon^2$.}
  \label{fig:splice-and-distbound}
 \end{figure}

 We must therefore compute the self-distances of $V(s)$. This poses a
 problem: $\V(s)$ is composed of arcs of circles and line segments and
 Neff has shown that there is no simple formula for the distance
 between two arbitrary circle arcs in 3-space \cite{MR1084085}. So we
 estimate the self-distances of the smooth curve $V(s)$ by taking
 distances between a finite number of sample points on the curve
 separated from one another by some $\epsilon$. We bound the error in
 our computation in terms of $\epsilon$ using the following
 Proposition.

 \begin{proposition} 
   Suppose that $c(s)$ and $d(t)$ are each unit-speed piecewise $C^2$
   arcs with curvature bounded above by $K$. Further, suppose that
   $\norm{c(0) - d(0)} > \nicefrac{1}{2}$ is the minimum distance
   between $c$ and $d$. Then for any $0 \leq s_0, t_0 \leq \epsilon$
   \begin{equation*}
     \norm{c(s_0) - d(t_0)} \leq \norm{c(0) - d(0)} + 
     \left( 1 + K \right)\epsilon^2.
   \end{equation*}
   \label{prop:approx}
 \end{proposition}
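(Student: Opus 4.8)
The plan is to reduce the claim to a second-order Taylor estimate anchored at the fact that $(c(0),d(0))$ realizes the minimum distance. Write $w = c(0) - d(0)$ and $x = \norm{w} > \nicefrac12$, and set $\hat w = w/x$. Since the minimum of $(s,t) \mapsto \norm{c(s)-d(t)}$ is attained at $(0,0)$, the first-order conditions there are exactly the doubly-critical conditions defining $\dcsd$: $\langle w, c'(0)\rangle = 0$ and $\langle w, d'(0)\rangle = 0$, i.e.\ the realizing chord meets both arcs orthogonally. I will treat $(0,0)$ as an interior minimizer, which is the relevant case; this perpendicularity is precisely what forces the distance to grow quadratically rather than linearly in $\epsilon$, and it is the structural input that the whole estimate rests on.

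Next I would expand each arc to first order with integral remainder. Writing $c(s_0) = c(0) + s_0 c'(0) + R_c$ with $R_c = \int_0^{s_0}(s_0-\sigma)c''(\sigma)\,\d\sigma$, unit speed gives $\norm{c''} = \kappa \le K$ almost everywhere (the piecewise-$C^2$ splice points cause no trouble, since the bound holds a.e.\ and we only integrate), so $\norm{R_c} \le \tfrac12 K s_0^2 \le \tfrac12 K\epsilon^2$, and likewise for $d$. Then
\[ c(s_0) - d(t_0) = w + \bigl(s_0 c'(0) - t_0 d'(0)\bigr) + (R_c - R_d). \]
I would decompose this relative to $\hat w$. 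By the doubly-critical conditions the first-order bracket is orthogonal to $w$, so the component of $c(s_0)-d(t_0)$ along $\hat w$ is $x + \langle R_c - R_d, \hat w\rangle$, which is $x$ up to a term of order $K\epsilon^2$, while the component orthogonal to $\hat w$ has norm at most $\norm{s_0 c'(0) - t_0 d'(0)} + \norm{R_c - R_d}$, of order $\epsilon$. Applying $\sqrt{a^2+b^2}\le a + \nicefrac{b^2}{2a}$ with $a$ the along-component and $b$ the perpendicular component then produces a bound of the shape $x + (\text{const})\,\epsilon^2$, and the lower bound $x > \nicefrac12$ is exactly what controls the term $\nicefrac{b^2}{2a}$.

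The main obstacle is the bookkeeping that yields the clean constant $1+K$. The perpendicular displacement $s_0 c'(0)-t_0 d'(0)$ is only first order in $\epsilon$, so its contribution $\nicefrac{b^2}{2a}$ is genuinely of order $\epsilon^2$ and must be absorbed into the ``$+1$''; this is where the hypothesis $x > \nicefrac12$ (rather than merely $x>0$) is used, and the estimate is sharpest when the two arcs are nearly antiparallel with $s_0,t_0$ both close to $\epsilon$. I would therefore carry the constants explicitly through the $\sqrt{a^2+b^2}$ step rather than discarding them, combine the remainder term of order $K\epsilon^2$ with the perpendicular term $\nicefrac{b^2}{2a}$, and verify the resulting coefficient against $1+K$ using $x > \nicefrac12$ together with $s_0,t_0 \le \epsilon$. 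No other step is deep: the orthogonality coming from the minimizing condition is the only genuinely geometric ingredient, and the remainder is Taylor's theorem plus the curvature bound.
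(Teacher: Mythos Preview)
Your proposal is correct and uses the same two ingredients as the paper: the orthogonality $\langle w,c'(0)\rangle=\langle w,d'(0)\rangle=0$ coming from the minimum, and the second-order Taylor remainder bounded via $\|c''\|,\|d''\|\le K$. The only difference is in the final algebra. The paper does not project along and perpendicular to $\hat w$; instead it writes $c(s_0)-d(t_0) = (c(0)-d(0)) + (C(s_0)-D(t_0))$ with $C(s_0)=\int_0^{s_0}c'$, expands $\|c(s_0)-d(t_0)\|^2$ into three terms, bounds the square term by $4\epsilon^2$ and the cross term by $2K\epsilon^2\|c(0)-d(0)\|$ (using orthogonality to kill the first-order part of the cross term), and then completes the square to $(\|c(0)-d(0)\|+(1+K)\epsilon^2)^2$. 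Your route via $\sqrt{a^2+b^2}\le a+\tfrac{b^2}{2a}$ is equivalent in substance but requires a separate lower bound on the along-component $a$; the paper's completing-the-square step sidesteps that, which is why its bookkeeping for the constant is slightly cleaner. Your integral remainder $R_c=\int_0^{s_0}(s_0-\sigma)c''(\sigma)\,\d\sigma$ is exactly the paper's $\int_0^{s_0}\!\int_0^s c''$ after Fubini, so the two expansions coincide.
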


 \begin{proof} 
   Since $\norm{c(s) - d(t)}$ has a local min at $(0,0)$, we know that
   \begin{equation*}
     \left< c'(0), c(0) - d(0) \right> = 0, \quad \text{
     and } \quad \left< d'(0), c(0) - d(0) \right> = 0.
   \end{equation*}
   Further, the curvature bound tells us that $\norm{c''},
   \norm{d''} < K$. We will use these facts to estimate
   $\norm{c(s_0) - d(t_0)}^2$. If we let $C(s_0) =
   \int_0^{s_0} c'(s) \, \mathrm{d}s$ and $D(t_0) =
   \int_0^{t_0} d'(t) \, \mathrm{d}t$ then we have $c(s_0) =
   C(s_0) + c(0)$ and $d(t_0) = D(t_0) + d(0)$, so
   \begin{equation}
     \label{eq:squared}
     \norm{ c(s_0) - d(t_0) }^2 = \norm{C(s_0) -
     D(t_0)}^2 - 2 \left< C(s_0) - D(t_0), c(0) -
     d(0) \right> + \norm{ c(0) - d(0)}^2.
   \end{equation}
   Since $c(s)$ and $d(t)$ are unit-speed curves, and $0 \leq s_0, t_0
   \leq \epsilon$ we know that $\norm{C(s_0)}, \norm{D(t_0)} <
   \epsilon$ and so the first term is bounded above by $4\epsilon^2$.

   The middle term is more interesting. As before, we can let $CC(s) =
   \int_0^s c''(x)\, \mathrm{d}x$ and $DD(t) = \int_0^t
   d''(y)\,\mathrm{d}y$, so $c'(s) = CC(s) + c'(0)$ and $d'(t) = DD(t) +
   d'(0)$. Since $c'(0)$ and $d'(0)$ are normal to $c(0) - d(0)$, we
   can then write this middle term as
   \begin{equation*}
     -\left< C(s_0) - D(t_0),c(0) - d(0) \right> = -\left<
      \int_0^{s_0} CC(s) \, \mathrm{d}s - \int_0^{t_0} DD(t) \,
      \mathrm{d}t, c(0) - d(0) \right>
   \end{equation*}
   Since $\norm{c''}, \norm{d''} < K$, we know $\norm{CC(s)} < Ks$,
   $\norm{DD(t)} < Kt$. Thus (remembering that $s_0$, $t_0 <
   \epsilon$) the norms of the integrals on the right above are each
   bounded above by $K \nicefrac{\epsilon^2}{2}$ and the entire dot product is
   bounded above by $K \epsilon^2 \norm{c(0) - d(0)}$.

   Thus the right hand side of~\eqref{eq:squared} is bounded by
   $\norm{c(0) - d(0)}^2 + 4\epsilon^2 + 2 K \epsilon^2 \norm{c(0) -
   d(0)}$. Since $\nicefrac{1}{2} < \norm{c(0) - d(0)}$, $4 \epsilon^2 < 2
   \epsilon^2 \norm{c(0) - d(0)}$. Using this, we see that 
   \begin{multline*}
      4\epsilon^2 + 2 K \epsilon^2 \norm{c(0) -
       d(0)}  + \norm{c(0) - d(0)}^2 < \norm{c(0) - d(0)}^2 + (2 + 2 K) \epsilon^2 \norm{c(0) -
       d(0)} \\ < \norm{c(0) - d(0)}^2 + (2 + 2 K) \epsilon^2 \norm{c(0) -
       d(0)} + (1 + K)^2 \epsilon^4 \\
       = \left(\norm{c(0) - d(0)} + (1 + K) \epsilon^2\right)^2.
   \end{multline*}
   This completes the proof.
 \end{proof}

 Our code, named \texttt{roundout\_rl}\footnote{freely available as
 part of the \octrope\ library}, establishes a coarse net of points on
 $V(s) \cross V(s) \simeq [0,1] \cross [0,1]$ and then eliminates
 subsquares of this square from consideration using
 Proposition~\ref{prop:approx}. The remaining squares are then
 subdivided and searched in turn. The process terminates once we have
 computed the local minima of $d(p,q)$ on the square with whatever
 accuracy we require.

 Using \texttt{roundout\_rl} in double-precision machine arithmetic we found upper bounds for the ropelengths of our \FileCount\ minimized configurations. These figures appear in column $\Rop$ of Tables~\ref{ByKnotTableone}-\ref{ByKnotTablethree} of Appendix~\ref{ropdata}. These figures constitute the best known dataset on the lengths of tight knots and links. The data is summarized in Figure~\ref{fig:data} and Table~\ref{tab:bestworst}.
 
 \newcommand{\Cr}{\operatorname{Cr}}
 
 \begin{figure}[ht]
 
 \begin{center}
 \begin{overpic}[width=5in,viewport=0 0 370 160]{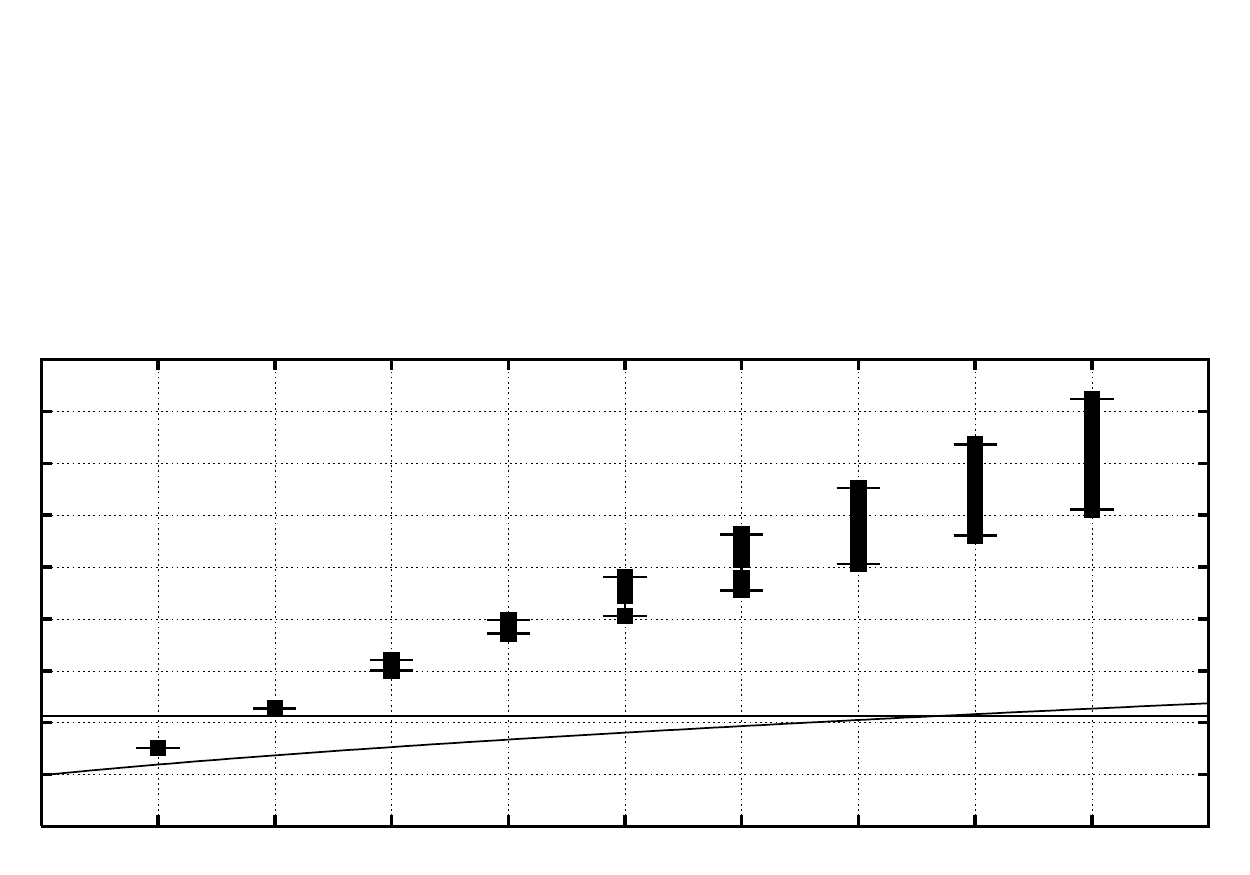} 
 \put(11.5,0){2}
 \put(20.5,0){3}	
 \put(29.5,0){4}	
 \put(39,0){5}	
 \put(48.0,0){6}	
 \put(57.0,0){7}
 \put(66.0,0){8}
 \put(75.0,0){9}
 \put(83.0,0){10}
 
 \put(-1.5,11){30}
 \put(-1.5,19){50}
 \put(-1.5,27){70}
 \put(-1.5,35){90}
 
 \end{overpic}
 \end{center}
 
 \caption{This graph shows the relationship between ropelength (y-axis) and crossing number (x-axis) for knots and links in our data set. The bottom lines show the bound of Denne et al.~\cite{MR2207788} for ropelength of a nontrivial knot (horizontal line) and Diao's bound~\cite{MR1953620} for ropelength in terms of crossing number (curve). We can see that there is a substantial overlap of ropelength values between different crossing numbers. This is reflected in Tables~\ref{ByRopTableone}-\ref{ByRopTabletwo} of Appendix~\ref{ropdata}, which show the knots in ropelength order. Table~\ref{tab:bestworst} shows the links of least and most ropelength for each crossing number.}
\label{fig:data}
\end{figure}

\begin{table}[ht]

\begin{center}
\begin{tabular}[b]{lcccc}
\toprule
$\Cr$ & $\Rop$ & Links \\
\midrule
$3$   & $32.74$ & $3_1$ & & \\
$4$ & $ [40.0122,42.0887] $ & $ 4^{2}_{1} $, $ 4_{1} $ \\ 
$5$ & $ [47.2016,49.7716] $ & $ 5_{1} $, $ 5^{2}_{1} $ \\ 
$6$ & $ [50.5539,58.1013] $ & $ 6^{3}_{3} $, $ 6^{2}_{3} $ \\ 
$7$ & $ [55.5095,66.3147] $ & $ 7^{2}_{7} $, $ 7^{2}_{6} $ \\ 
$8$ & $ [60.5754,75.2592] $ & $ 8^{3}_{7} $, $ 8^{4}_{1} $ \\ 
$9$ & $ [66.0311,83.6092] $ & $ 9^{2}_{49} $, $ 9^{2}_{42} $ \\ 
$10$ & $ [71.0739,92.3565] $ & $ 10_{124} $, $ 10_{123} $ \\ 
\bottomrule
\end{tabular}

\end{center}

\medskip

\caption{This table shows the links of smallest and largest minimum ropelength for each crossing number (according to our data). Recall that we did not minimize ten-crossing links, so it is likely that some ten-crossing link has more or less ropelength than the $10_{123}$ and $10_{124}$ knots.}
\label{tab:bestworst}
\end{table}


 To test how accurate these final results are likely to be, we
 computed the relative residual $\nicefrac{\norm{(-\grad
 f)_I}}{\norm{-\grad f}}$ for all these knots and links. The average residual of knots in our tabulation is about $\AverageResidual$. We have achieved residuals as low as $2.54 \times 10^{-5}$ for knots and links of special interest, such as $8_{18}$, $10_{123}$, the trefoil, and the  Borromean rings. A table of these residuals appears in Appendix~\ref{ropdata}. Four knots and links in our calculation turned out to be particularly difficult for \ridgerunner: $10_{61}$, $8^3_{10}$, $8^4_3$ and $9^3_{17}$.  
 
\subsection{Generation of tightening animations, pictures, and strut sets}
 We have saved the minimization runs for each
 of these knots and links as an animation showing the tightening
 knot. These animations are posted on the web at
 \url{http://www.jasoncantarella.com/movs/}.
 
We have also generated images of the polygonal strut sets and approximately tight configurations for each of the \FileCount\ knots and links in our data set. Space considerations prevent us from including all of this data in this paper, so they are enclosed in the associated \emph{Atlas of Tight Links}~\cite{tightlinkatlas}. Figure~\ref{atlas} shows a typical page from the \emph{Atlas}. All of our tight knot and link data, including coordinates for the tight configurations, is publicly available with the publication of this paper. We note that for technical reasons, our minimized configurations have thickness close to $1/2$ (rather than $1$, as in the discussion above), and hence their maximum curvature is $2$.

\begin{figure}

\hphantom{.}
\hspace{0.3in}
\begin{minipage}[t]{3in}
\newpage
\label{SevenOnePage}
\begin{tabular}{ll} \includegraphics[width=0.9in]{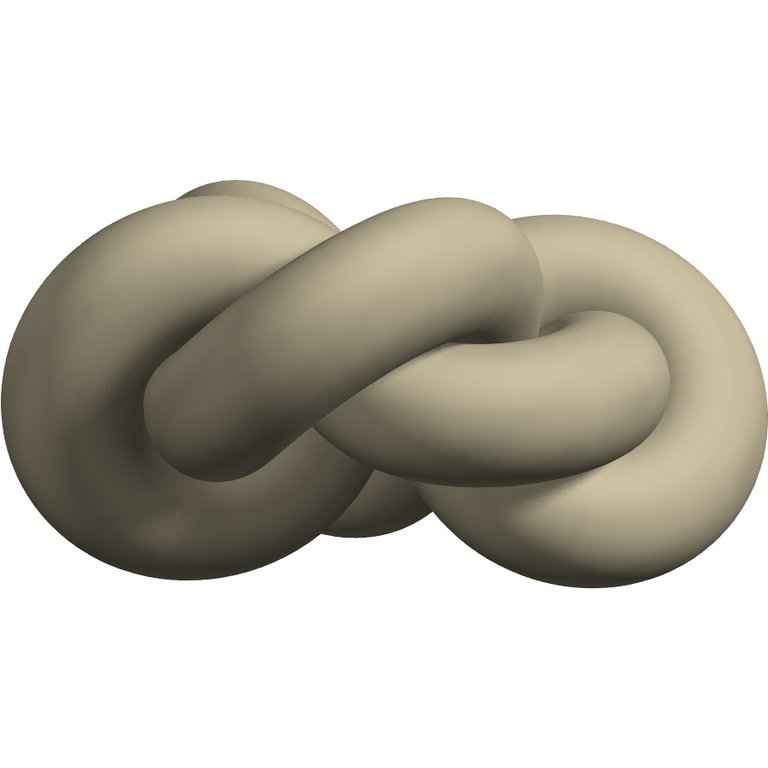} & \includegraphics[width=0.9in]{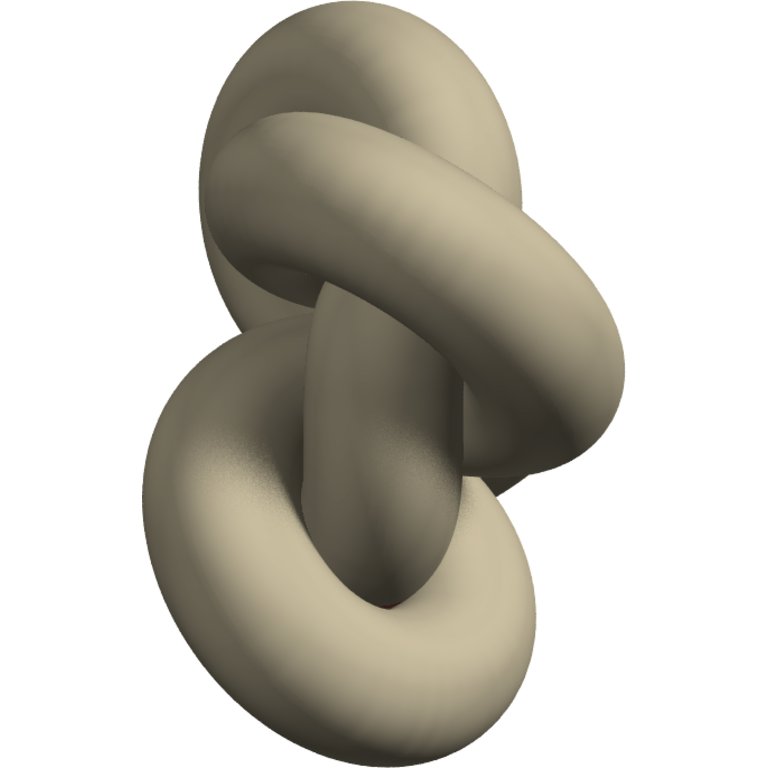} \\
\includegraphics[width=0.9in]{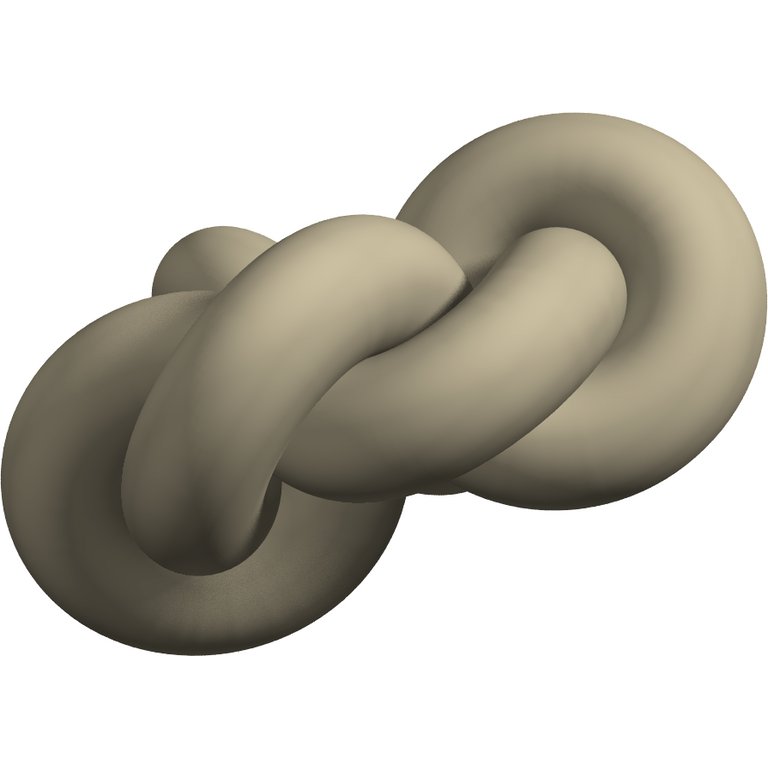} & \end{tabular} 

\vspace{-1.4in}

\hspace{0.4in}\includegraphics[width=1.6in,viewport=170 165 440 570]{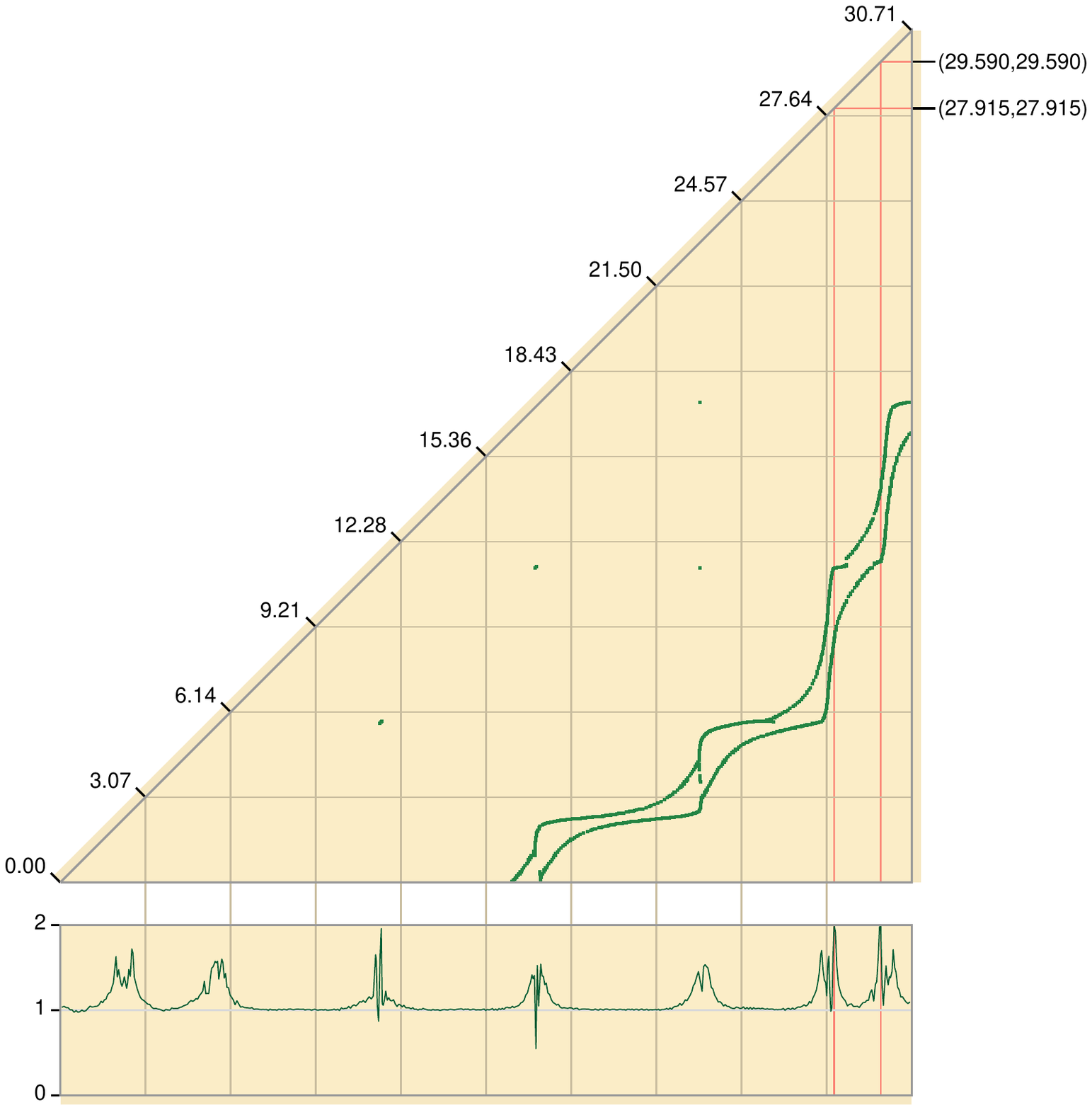} 

\vspace{0.4in}

\begin{tiny}
\hspace{0.1in}
\begin{tabular}{lllll} \toprule
Link & $\PRop$ & $\Rop$ & Verts & Struts \\ \midrule
$7_{1}$ & $61.4234$ & $61.4067$ & $512$ & $788$ \\
\bottomrule
\end{tabular} 
\end{tiny}
\end{minipage}
\hspace{-0.3in}
\begin{minipage}[t]{3in}
\label{FourTwoOnePage}
\begin{tabular}{ll} \includegraphics[width=0.9in]{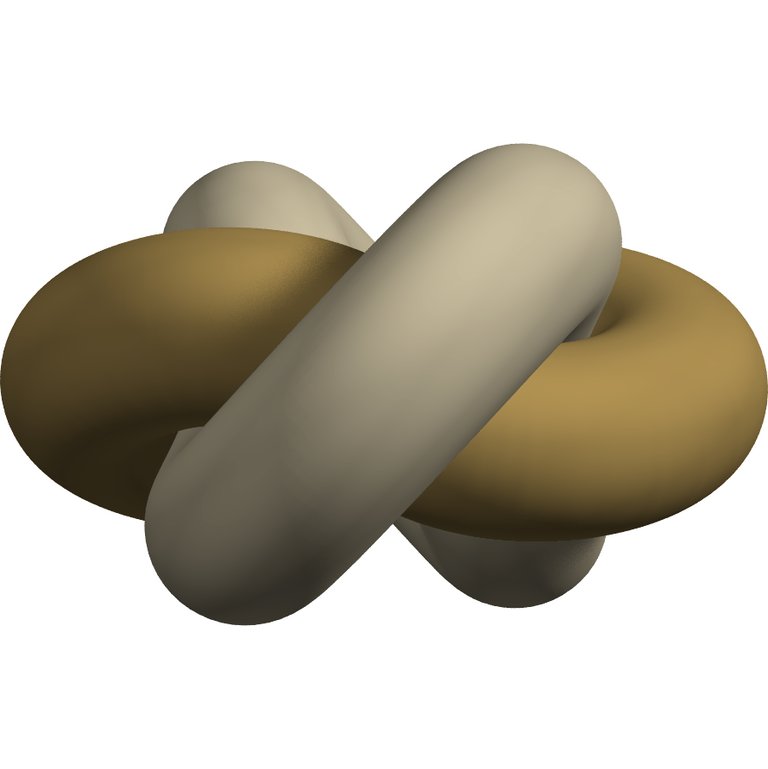} & \includegraphics[width=0.9in]{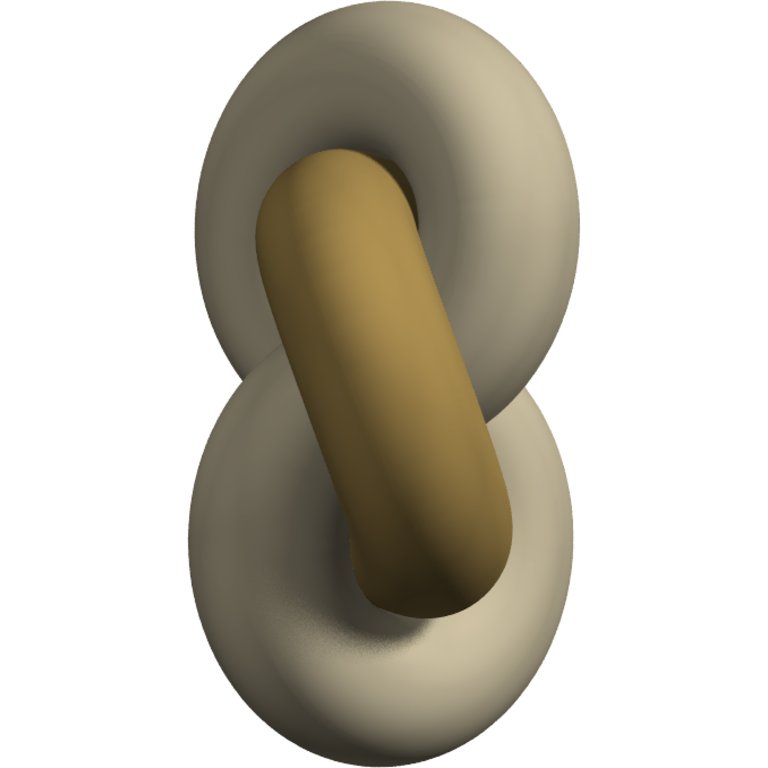} \\
\includegraphics[width=0.9in]{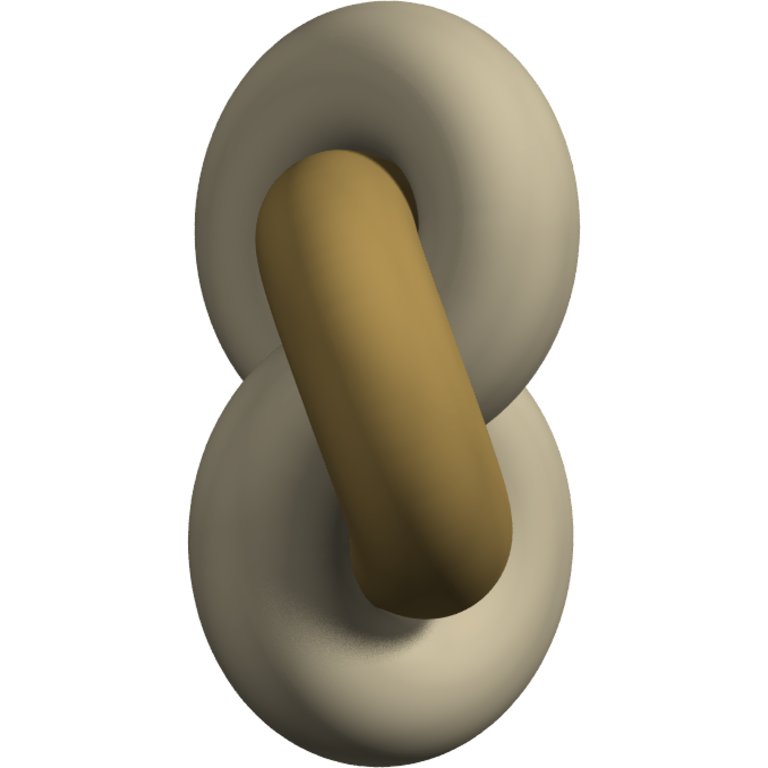} & \end{tabular} 

\vspace{-1.4in}

\hspace{0.4in}\includegraphics[width=1.6in,viewport=170 165 440 570]{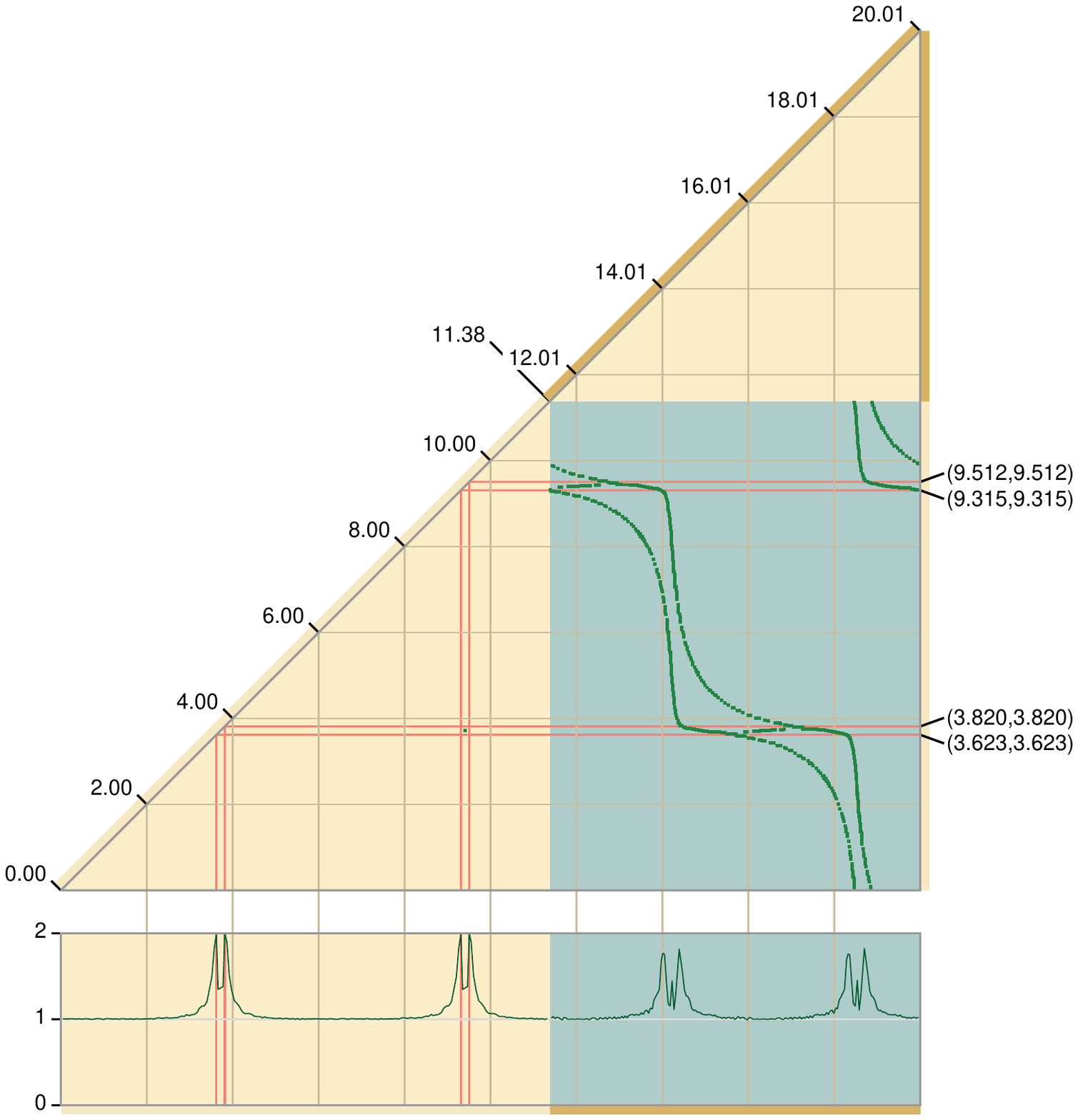} 

\vspace{0.4in}

\begin{tiny}
\hspace{0.1in}
\begin{tabular}{lllll} \toprule
Link & $\PRop$ & $\Rop$ & Verts & Struts \\ \midrule
$4^{2}_{1}$ & $40.0203$ & $40.0122$ & $400$ & $610$ \\
\bottomrule
\end{tabular}
\end{tiny}
\end{minipage}

\caption{This figure shows simplified versions of two pages from the~\emph{Atlas of Tight Links} for the knot $7_1$ and the link $4^2_1$. On each page, the top left pictures show three views of the link. The triangular graphic shows the struts of the link as found by \ridgerunner\ plotted as points $(s,t)$ in arclength coordinates along the link. The graph on the bottom of the page shows the curvature of the curve. The background of each plot changes color to indicate the change from one component to the next. The key along the left-to-right diagonal is given in ropelength units and color-coded with the pictures at upper left to show which component is referred to by the plot.}
\label{atlas}
\end{figure}

\subsection{Discovery of symmetric tight knots}
An interesting feature of the ropelength function is that minimizing ropelength seems to break any symmetry enjoyed by the original configuration of a given knot. For instance, while the minimizing configuration for the $(3,2)$ torus knot $3_1$ appears to be threefold symmetric (as expected), the minimizing configuration for the $(5,2)$ torus knot $5_1$ is not fivefold symmetric. It was therefore somewhat surprising to discover two knots in our data set, $8_{18}$ and $10_{123}$ for which the tight configurations are highly symmetric. These knots are shown in Figure~\ref{symmetric}. Their self-contact sets (which appear on pages $67$ and $358$ of the \emph{Atlas}, and are reproduced in the Appendix of this paper on pages~\pageref{EightEighteenPage} and~\pageref{TenOneHundredAndTwentyThreePage}) are highly suggestive, resembling those of the Borromean rings (page $29$), and appearing to consist of a single element repeated several times. This feature implies that these knots may be better candidates for explicit solution than the seemingly simpler trefoil knot.

\begin{figure}
\includegraphics[height=2.75in]{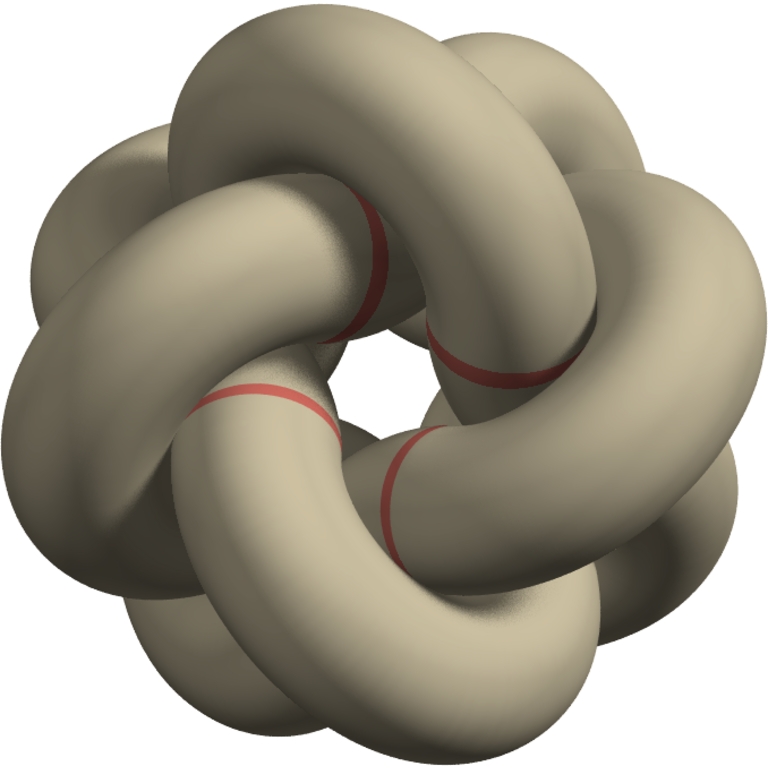}  \hspace{0.5in}
\includegraphics[height=2.75in]{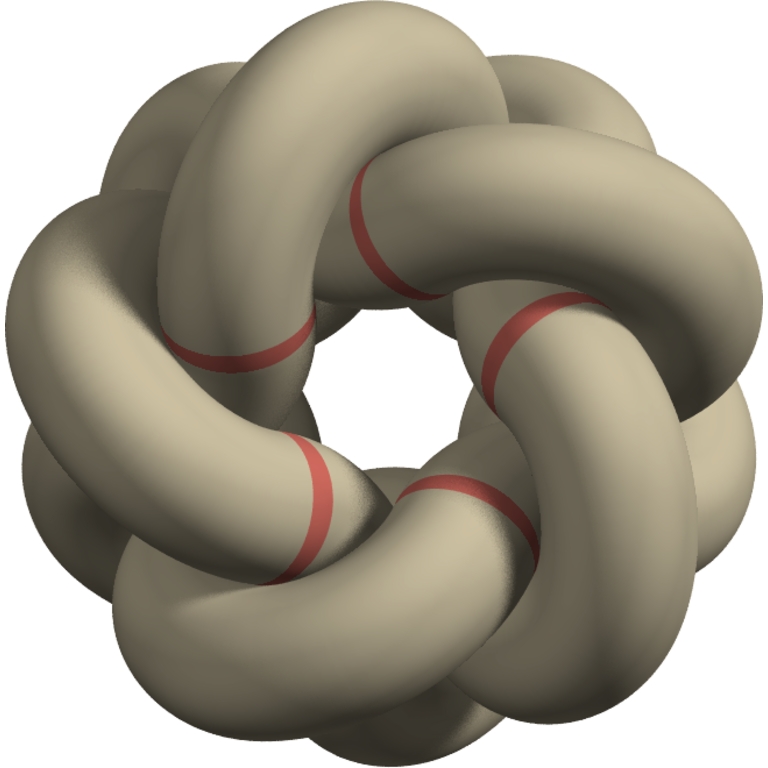}
\caption{Two highly symmetric tight knots are the $8_{18}$ knot shown above left and the $10_{123}$ knot shown above right. Rounding the corners of these curves yields ropelength upper bounds of $\EightEighteenRRUB$ and $\TenOneHundredAndTwentyThreeRRUB$, respectively. Because their strut sets break into a particularly simple form (see pages~\pageref{EightEighteenPage} and~\pageref{TenOneHundredAndTwentyThreePage}), these knots may be better candidates for an explicit solution than seemingly simpler knots such as the trefoil.}
\label{symmetric}
\end{figure}

\section{Future directions}

Several directions for future research suggest themselves from these experiments. First, we note that while we have given finite strut sets for several polygonal knots and observed that they are close to the the 1-dimensional strut sets for the corresponding smooth tight configurations, we have not proved a theorem explaining how our polygonal strut sets converge to the strut sets of a critical polygon. We conjecture that this is part of a larger theorem which would show that if a family of polygonal ropelength critical configurations $\V_n$ converge to a $C^{1,1}$ curve $V$ then $V$ is ropelength critical in the sense of~\cite{cfksw2}, the strut sets of the $\V_n$ converge in Hausdorff distance to the self-contact set of $V$, and the kink sets of the $\V_n$ converge to the portion of $V$ at maximum curvature. 

There are several features of the tight knot data set that we have discovered that seem worthy of further investigation. Carlen, Smutny and Maddocks noted in~\cite{mglob} that curvature constraints seemed to be ``within a rather small tolerance of being active'' at several points on their numerical approximations of the tight trefoil and figure-eight knots. Baranska et al.\ provided numerically smoothed plots of the curvature of their approximately tight trefoil in~\cite{MR2471403} which appear to confirm this observation. 

\begin{figure}[ht]
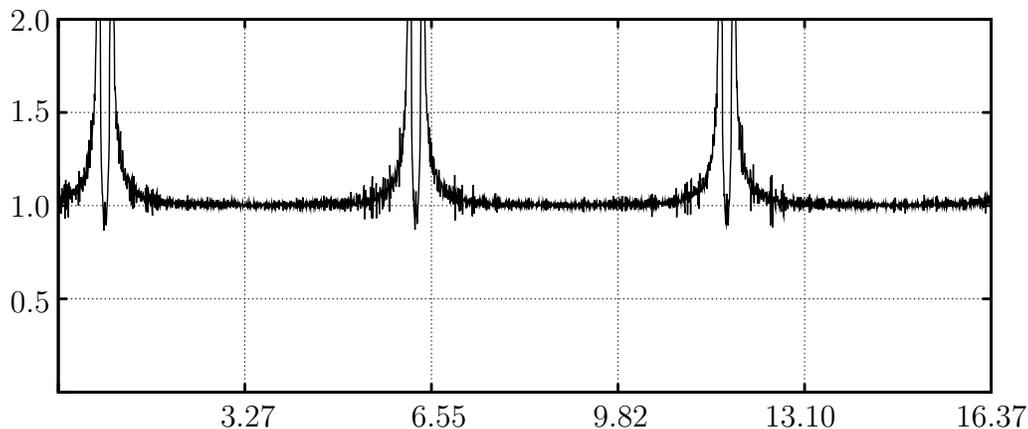

\begin{overpic}[width=5in,viewport=10 50 350 200]{{kl_3_1_vvhires.curvature}.pdf}
\put(-2,12){$0.5$}
\put(-2,22){$1.0$}
\put(-2,32){$1.5$}
\put(-2,41.5){$2.0$}
\put(20,0){$3.27$}
\put(40,0){$6.55$}
\put(59,0){$9.82$}
\put(77,0){$13.10$}
\put(97,0){$16.37$}
\end{overpic}
\caption{This plot shows the computed $1/\MinRad$ values as a function of arclength along the polygon for a 2400 edge trefoil with thickness close to $1/2$, residual $0.0018$ and polygonal ropelength $32.743663$ (rounding out the corners as described above gives a smooth ropelength upper bound of $32.74352$ for this configuration). The value at each vertex is plotted above with no numerical smoothing. Though there is some noise in the portions of the plot where curvature is not constrained, the six kinked regions are clearly resolved. A total of $117$ vertices are involved in these regions.}
\label{fig:trefcurvature}
\end{figure}

We have noticed the same phenomenon in our data sets. Our computation of the curvature for the trefoil appears in Figure~\ref{fig:trefcurvature}. In the \emph{Atlas of Tight Knots}, we highlight the active curvature constraints found by \ridgerunner\ as part of the minimization process by red lines on the plot of strut sets. These occur in \KinkCount\ of the \FileCount\ knots and links minimized. This provides suggestive numerical evidence that kinks are rather common in tight knots. We intend to provide better evidence for this conjecture in an upcoming publication.

Several authors have proved versions of the theorem that an interval of a tight knot with curvature less than the maximum allowed and no struts must be a straight line segment~\cite{MR2000c:57008,MR2033143,cfksw2,MR2355512}. We see this phenomenon \StraightCount\ times in the~\emph{Atlas}, for instance in the link $6^3_3$ on page $28$ of the~\emph{Atlas} (see also Figure~\ref{sixthreethree}), which appears to have three straight segments of length $2.1$, $1.14$, and $0.56$. We highlight these segments in blue on the plots in the \emph{Atlas}. These segments are almost as common as kinked regions in our data set, suggesting that they are generic features of tight configurations. Gonzalez has conjectured that every composite knot formed from joining a knot to its mirror image has a critical configuration with a pair of straight segments. We do not address this conjecture here since we only consider prime knots and links, but we do intend to compute approximately minimizing composite knots and links in a future publication.

\begin{figure}
\includegraphics[width=2in]{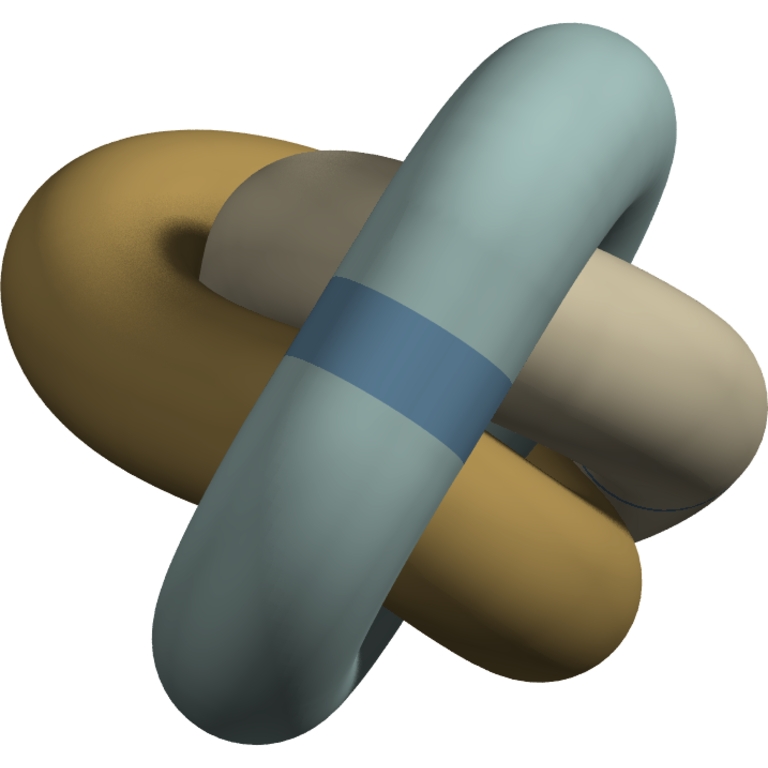} \hspace{0.25in}
\includegraphics[width=2in]{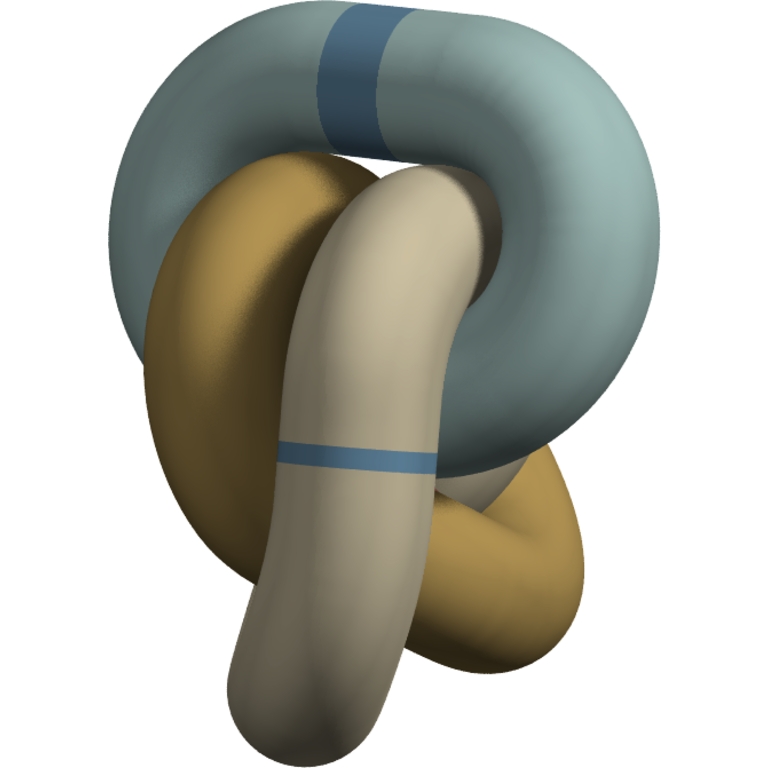} 
\caption{This figure shows two views of our computed tight configuration of the link $6^3_3$ (ropelength upper bound $\SixThreeThreeRRUB$). Straight segments on the blue and white components, which occur when these components lose contact with the other components of the link, are highlighted in darker blue.}
\label{sixthreethree}
\end{figure}

The paper~\cite{cfksw2} (as well as~\cite{MR89g:73037} under very different hypotheses) shows that a pair of arcs in a tight knot coparametrized by a single family of struts and having curvature less than the maximum bound form a standard double helix. As far as we can tell, this phenomenon only occurs a few times in the \emph{Atlas}, for instance in the $6^3_3$ link on page $28$, the $7^2_7$ link on page $43$, the $8_{19}$ knot on page $66$, and possibly in the $8^3_7$ link on page $91$. It would be interesting to look for more critical configurations with double-helix sections.


 We also contemplate further improvements to our numerical knot tightening methods. 
 The constrained gradient descent method presented in this paper is a
 significant improvement over simulated annealing --- in practice, it
 has proved to be an effective minimizer for both knots and links. But
 this is surely not the last word in numerical ropelength
 minimization. Our method is a member of the class of
 ``projected-gradient'' methods introduced by Rosen and Zoutendijk in
 the early 1960's~\cite{MR0135991,MR0122578}. These algorithms are
 subject to a number of well-known numerical problems, such as a
 tendency to ``wobble'' when confronted with a steep-sided valley and
 the problem of ``zigzagging'', which occurs when elements repeatedly
 enter and leave the strut and kink sets on successive minimization or
 error-correction steps. Our implementation seems to suffer from both
 these problems during some difficult minimizations. We have experimented with adding conjugate-gradient features to our existing code to solve these problems, but so far the results seem to yield only a slight improvement.
  
 For these reasons, more modern methods such as sequential quadratic programming (SQP) have become the norm~\cite{MR1867781}. Codes implementing these methods require the user to specify a set of constraint functions in advance. Unfortunately, in our formulation of
 the constraint thickness an $n$-vertex polygon has $O(n^2)$
 self-distance constraints and $O(n)$ turning angle or $\MinRad$
 constraints. For a typical polygon with $10^3$ vertices, this would
 mean a set of $10^6$ constraints --- too many to be practical. However,
 if we know approximately which self-distance constraints will be
 active in the final configuration, we can ignore constraints that
 we expect to be inactive, resulting in a reduced constraint set of
 size $O(n)$. Our approximately minimized polygons provide exactly
 this information. For this reason we imagine an important use
 of our data will be in formulating input problems for a future SQP-based
 knot-minimizer. Our polygons are already serving as input for the
 biarc-based annealer of Carlen, Smutny, and Maddocks~\cite{mglob}.

 While our data set is detailed and suggestive, solving explicitly for
 the structure of ropelength minimizing (smooth) knots and links is
 likely to require even better data. Cantarella et al.~\cite{cfksw2}
 have shown that a critical shape for the simple clasp formed when
 ropes pass over one another at right angles contains tiny straight
 segments of length a few thousandths of the total length of the
 curves. Resolving these features will require converged runs for
 polygonal ropelength minimizers with tens of thousands of vertices,
 an ambitious goal that will keep this area of experimental
 mathematics active for some time to come.

 \section{Acknowledgements}
 
 The authors would like to mention the hard work of Sivan Toledo,
 whose \TAUCS\ library made \tlsqr\ and \tsnnls\ possible. Our code was made much faster by Toledo's carefully written supernodal
 multifrontal cholesky factorization code. We are similarly indebted
 to Portugal, Judice and Vicente for developing the block principal
 pivoting algorithm. Many colleagues provided helpful conversations
 and insights about these and similar problems, including Joe Fu, Rob
 Kusner, John Sullivan, Piotr Pieranski, John Maddocks and Andrzej
 Stasiak.  The authors would also like to acknowledge the support of
 the National Science Foundation through the University of Georgia
 VIGRE grant (DMS-00-89927), DMS-02-04826 (to Cantarella and Fu), and DMS-08-10415 (to Rawdon).
 
\bibliography{rr05siggraph,drl,cantarella,tsnnls} 
\newpage

\appendix
\section{Ropelength Data}
\label{ropdata} 

The pages that follow contain three sets of tables of ropelength data. The first set, Tables~\ref{ByKnotTableone}-\ref{ByKnotTablethree} on pages~\pageref{ByKnotTableone}--\pageref{ByKnotTablethree}, show the polygonal ropelength ($\PRop$) and ropelength upper bounds ($\Rop$) that we have obtained for each of the knot types that we have considered. The knots and links are organized according to their position in Rolfsen's table, with the link $X^y_z$ being the $z$-th example of a prime $X$-crossing link of $y$ components in the table. We have identified the two ``Perko pair'' knots $10_{161}$ and $10_{162}$ and renumbered the subsequent knots accordingly, so there are only 165 ten-crossing knots in our results.

The second set, Tables~\ref{ByRopTableone} and~\ref{ByRopTabletwo} on pages~\pageref{ByRopTableone}--\pageref{ByRopTabletwo}, show the same knot and link types ordered by ropelength upper bound. These tables are to be read down each column from the top left to the bottom right. We can see that this order is quite different from the one in Rolfsen's table with (for instance) the 2-component link $7^2_7$ occurring before any 6 or 7 crossing knot and the $10_{124}$ knot occurring before many 8 and 9 crossing links. 

The third set of tables, Tables~\ref{ResidualTableone}--\ref{ResidualTablethree} on pages~\pageref{ResidualTableone}--\pageref{ResidualTablethree} give the residual of each of our computed configurations. The low residuals show that they are close to critical in the sense of Theorem~\ref{thm:constrainedcritical}. We include this data as measure of the relative quality of each of our minimized configurations.

On pages~\pageref{EightEighteenPage}--\pageref{TenOneHundredAndTwentyThreePage} are reproductions of the pages from the~\emph{Atlas of Tight Knots} for the approximately tight $8_{18}$ and $10_{123}$ knots . On the top left of each page are three views of the tight configurations, with kinked regions highlighted in red. On the top right is a plot of the self-contact map of the configuration. Each of these plots consists of a triangular region with the hypotenuse labeled with arclength values on the knot. A green box is plotted at $(s,t)$ on the plot if there is a strut connecting $L(s)$ and $L(t)$. Below the graph appears a plot of $1/\MinRad$ for the polygon (to the same scale). Kinked regions of maximum curvature are plotted in red on the graph. Each such region has a key on the right-hand side of the plot showing the arclength positions of the start and end of the kink (in order to give a sense of the relative scale of the kinked region). At the bottom of the page is a line of data giving the polygonal ropelength $\PRop$ (as measured by \texttt{octrope}), ropelength upper bound $\Rop$ (from \texttt{roundout\_rl}), filename, number of vertices and struts, maximum and minimum curvature values and number of kinked regions. The last entry shows the total arclength of straight regions in the curves (0 for these two knots, but nonzero for many knots and links in the~\emph{Atlas}).

\footnotesize
\begin{table}[h]
\caption{\label{ByKnotTableone}Part 1 of Ropelengths of Tight Knots and Links by Knot Type}
\begin{tabular}[t]{lll} \toprule
Link & $\PRop$ & $\Rop$\\ \midrule 
$2^{2}_{1}$ & $25.1415$ & $25.1334$ \\
\addlinespace[0.45em]\cmidrule(r{0.5em}l{0.5em}){1-3}\addlinespace[0.45em]
$3_{1}$ & $32.7437$ & $32.7436$ \\
\addlinespace[0.45em]\cmidrule(r{0.5em}l{0.5em}){1-3}\addlinespace[0.45em]
$4_{1}$ & $42.0971$ & $42.0887$ \\
\addlinespace[0.45em]\cmidrule(r{1.5em}l{1.5em}){1-3}\addlinespace[0.45em]
$4^{2}_{1}$ & $40.0203$ & $40.0122$ \\
\addlinespace[0.45em]\cmidrule(r{0.5em}l{0.5em}){1-3}\addlinespace[0.45em]
$5_{1}$ & $47.2149$ & $47.2016$ \\
$5_{2}$ & $49.4820$ & $49.4701$ \\
\addlinespace[0.45em]\cmidrule(r{1.5em}l{1.5em}){1-3}\addlinespace[0.45em]
$5^{2}_{1}$ & $49.7864$ & $49.7716$ \\
\addlinespace[0.45em]\cmidrule(r{0.5em}l{0.5em}){1-3}\addlinespace[0.45em]
$6_{1}$ & $56.7178$ & $56.7058$ \\
$6_{2}$ & $57.0381$ & $57.0235$ \\
$6_{3}$ & $57.8531$ & $57.8392$ \\
\addlinespace[0.45em]\cmidrule(r{1.5em}l{1.5em}){1-3}\addlinespace[0.45em]
$6^{2}_{1}$ & $54.3919$ & $54.3768$ \\
$6^{2}_{2}$ & $56.7087$ & $56.7000$ \\
$6^{2}_{3}$ & $58.1142$ & $58.1013$ \\
\addlinespace[0.45em]\cmidrule(r{1.5em}l{1.5em}){1-3}\addlinespace[0.45em]
$6^{3}_{1}$ & $57.8286$ & $57.8141$ \\
$6^{3}_{2}$ & $58.0112$ & $58.0070$ \\
$6^{3}_{3}$ & $50.5602$ & $50.5539$ \\
\addlinespace[0.45em]\cmidrule(r{0.5em}l{0.5em}){1-3}\addlinespace[0.45em]
$7_{1}$ & $61.4234$ & $61.4067$ \\
$7_{2}$ & $63.8684$ & $63.8556$ \\
$7_{3}$ & $63.9430$ & $63.9285$ \\
$7_{4}$ & $64.2836$ & $64.2687$ \\
$7_{5}$ & $65.2705$ & $65.2560$ \\
$7_{6}$ & $65.7068$ & $65.6924$ \\
$7_{7}$ & $65.6235$ & $65.6086$ \\
\addlinespace[0.45em]\cmidrule(r{1.5em}l{1.5em}){1-3}\addlinespace[0.45em]
$7^{2}_{1}$ & $64.2484$ & $64.2345$ \\
$7^{2}_{2}$ & $65.0363$ & $65.0204$ \\
$7^{2}_{3}$ & $65.3414$ & $65.3257$ \\
$7^{2}_{4}$ & $65.0759$ & $65.0602$ \\
$7^{2}_{5}$ & $66.2068$ & $66.1915$ \\
$7^{2}_{6}$ & $66.3281$ & $66.3147$ \\
$7^{2}_{7}$ & $55.5177$ & $55.5095$ \\
$7^{2}_{8}$ & $57.7714$ & $57.7631$ \\
\addlinespace[0.45em]\cmidrule(r{1.5em}l{1.5em}){1-3}\addlinespace[0.45em]
$7^{3}_{1}$ & $65.8157$ & $65.8062$ \\
\addlinespace[0.45em]\cmidrule(r{0.5em}l{0.5em}){1-3}\addlinespace[0.45em]
$8_{1}$ & $70.9833$ & $70.9669$ \\
$8_{2}$ & $71.4141$ & $71.3985$ \\
\bottomrule
\end{tabular}
\hspace{0.25in}
\begin{tabular}[t]{lll} \toprule
Link & $\PRop$ & $\Rop$\\ \midrule 
$8_{2}$ & $71.4141$ & $71.3985$ \\
$8_{3}$ & $71.1736$ & $71.1575$ \\
$8_{4}$ & $71.4872$ & $71.4704$ \\
$8_{5}$ & $72.1519$ & $72.1344$ \\
$8_{6}$ & $72.4903$ & $72.4725$ \\
$8_{7}$ & $72.2292$ & $72.2137$ \\
$8_{8}$ & $72.7438$ & $72.7241$ \\
$8_{9}$ & $72.4568$ & $72.4399$ \\
$8_{10}$ & $72.9580$ & $72.9379$ \\
$8_{11}$ & $72.9110$ & $72.8966$ \\
$8_{12}$ & $73.9707$ & $73.9518$ \\
$8_{13}$ & $72.8194$ & $72.8000$ \\
$8_{14}$ & $73.7784$ & $73.7612$ \\
$8_{15}$ & $73.9076$ & $73.8977$ \\
$8_{16}$ & $73.5207$ & $73.5054$ \\
$8_{17}$ & $74.5075$ & $74.4912$ \\
$8_{18}$ & $74.9114$ & $74.9063$ \\
$8_{19}$ & $60.9970$ & $60.9858$ \\
$8_{20}$ & $63.1066$ & $63.0929$ \\
$8_{21}$ & $65.5387$ & $65.5248$ \\
\addlinespace[0.45em]\cmidrule(r{1.5em}l{1.5em}){1-3}\addlinespace[0.45em]
$8^{2}_{1}$ & $68.4208$ & $68.4045$ \\
$8^{2}_{2}$ & $71.0493$ & $71.0311$ \\
$8^{2}_{3}$ & $72.7292$ & $72.7133$ \\
$8^{2}_{4}$ & $72.5995$ & $72.5855$ \\
$8^{2}_{5}$ & $73.9503$ & $73.9331$ \\
$8^{2}_{6}$ & $73.2133$ & $73.1955$ \\
$8^{2}_{7}$ & $74.3917$ & $74.3752$ \\
$8^{2}_{8}$ & $73.7714$ & $73.7540$ \\
$8^{2}_{9}$ & $73.2196$ & $73.2038$ \\
$8^{2}_{10}$ & $73.6729$ & $73.6548$ \\
$8^{2}_{11}$ & $72.9786$ & $72.9608$ \\
$8^{2}_{12}$ & $73.8018$ & $73.7846$ \\
$8^{2}_{13}$ & $74.1522$ & $74.1369$ \\
$8^{2}_{14}$ & $73.6878$ & $73.6695$ \\
$8^{2}_{15}$ & $64.3105$ & $64.2996$ \\
$8^{2}_{16}$ & $66.8148$ & $66.8046$ \\
\addlinespace[0.45em]\cmidrule(r{1.5em}l{1.5em}){1-3}\addlinespace[0.45em]
$8^{3}_{1}$ & $72.2765$ & $72.2603$ \\
$8^{3}_{2}$ & $72.9357$ & $72.9181$ \\
$8^{3}_{3}$ & $74.8824$ & $74.8656$ \\
$8^{3}_{4}$ & $75.0026$ & $74.9866$ \\
$8^{3}_{5}$ & $73.4072$ & $73.3932$ \\
$8^{3}_{6}$ & $74.7320$ & $74.7159$ \\
$8^{3}_{7}$ & $60.5897$ & $60.5754$ \\
$8^{3}_{8}$ & $65.0195$ & $65.0042$ \\
\bottomrule
\end{tabular}
\hspace{0.25in}
\begin{tabular}[t]{lll} \toprule
Link & $\PRop$ & $\Rop$\\ \midrule 
$8^{3}_{8}$ & $65.0195$ & $65.0042$ \\
$8^{3}_{9}$ & $66.7076$ & $66.6936$ \\
$8^{3}_{10}$ & $68.4580$ & $68.4503$ \\
\addlinespace[0.45em]\cmidrule(r{1.5em}l{1.5em}){1-3}\addlinespace[0.45em]
$8^{4}_{1}$ & $75.2748$ & $75.2592$ \\
$8^{4}_{2}$ & $67.4087$ & $67.3937$ \\
$8^{4}_{3}$ & $66.2969$ & $66.2865$ \\
\addlinespace[0.45em]\cmidrule(r{0.5em}l{0.5em}){1-3}\addlinespace[0.45em]
$9_{1}$ & $75.5663$ & $75.5461$ \\
$9_{2}$ & $78.1231$ & $78.1066$ \\
$9_{3}$ & $78.2040$ & $78.1892$ \\
$9_{4}$ & $78.2793$ & $78.2665$ \\
$9_{5}$ & $78.6615$ & $78.6447$ \\
$9_{6}$ & $79.5802$ & $79.5597$ \\
$9_{7}$ & $79.6924$ & $79.6731$ \\
$9_{8}$ & $80.0276$ & $80.0080$ \\
$9_{9}$ & $79.8965$ & $79.8778$ \\
$9_{10}$ & $79.8009$ & $79.7855$ \\
$9_{11}$ & $80.1355$ & $80.1180$ \\
$9_{12}$ & $80.0997$ & $80.0834$ \\
$9_{13}$ & $80.2657$ & $80.2498$ \\
$9_{14}$ & $80.0193$ & $80.0001$ \\
$9_{15}$ & $80.8941$ & $80.8725$ \\
$9_{16}$ & $80.1334$ & $80.1143$ \\
$9_{17}$ & $80.4718$ & $80.4530$ \\
$9_{18}$ & $81.5816$ & $81.5673$ \\
$9_{19}$ & $80.9196$ & $80.9004$ \\
$9_{20}$ & $80.2421$ & $80.2219$ \\
$9_{21}$ & $81.1083$ & $81.0920$ \\
$9_{22}$ & $81.0587$ & $81.0390$ \\
$9_{23}$ & $81.2922$ & $81.2733$ \\
$9_{24}$ & $80.9626$ & $80.9451$ \\
$9_{25}$ & $81.1348$ & $81.1198$ \\
$9_{26}$ & $80.9241$ & $80.9053$ \\
$9_{27}$ & $81.1838$ & $81.1813$ \\
$9_{28}$ & $81.0878$ & $81.1352$ \\
$9_{29}$ & $81.2019$ & $81.1821$ \\
$9_{30}$ & $81.4811$ & $81.4883$ \\
$9_{31}$ & $81.6751$ & $81.6581$ \\
$9_{32}$ & $81.5343$ & $81.5175$ \\
$9_{33}$ & $82.7691$ & $82.7541$ \\
$9_{34}$ & $82.1884$ & $82.1706$ \\
$9_{35}$ & $79.2390$ & $79.2165$ \\
$9_{36}$ & $80.2275$ & $80.2064$ \\
$9_{37}$ & $81.1744$ & $81.1674$ \\
$9_{38}$ & $81.7858$ & $81.7697$ \\
\bottomrule
\end{tabular}
\hspace{0.25in}
\end{table}
\begin{table}[h]
\caption{\label{ByKnotTabletwo}Part 2 of Ropelengths of Tight Knots and Links by Knot Type}
\begin{tabular}[t]{lll} \toprule
Link & $\PRop$ & $\Rop$\\ \midrule 
$9_{38}$ & $81.7858$ & $81.7697$ \\
$9_{39}$ & $81.8439$ & $81.8264$ \\
$9_{40}$ & $81.6652$ & $81.6474$ \\
$9_{41}$ & $81.3687$ & $81.3540$ \\
$9_{42}$ & $69.4867$ & $69.4756$ \\
$9_{43}$ & $71.5050$ & $71.4901$ \\
$9_{44}$ & $71.5587$ & $71.5427$ \\
$9_{45}$ & $74.0861$ & $74.0761$ \\
$9_{46}$ & $68.6330$ & $68.6169$ \\
$9_{47}$ & $74.8935$ & $74.8785$ \\
$9_{48}$ & $74.0317$ & $74.0228$ \\
$9_{49}$ & $73.9403$ & $73.9286$ \\
\addlinespace[0.45em]\cmidrule(r{1.5em}l{1.5em}){1-3}\addlinespace[0.45em]
$9^{2}_{1}$ & $78.6049$ & $78.5862$ \\
$9^{2}_{2}$ & $79.5287$ & $79.5152$ \\
$9^{2}_{3}$ & $79.9495$ & $79.9312$ \\
$9^{2}_{4}$ & $78.6961$ & $78.6764$ \\
$9^{2}_{5}$ & $79.6569$ & $79.6384$ \\
$9^{2}_{6}$ & $80.1200$ & $80.1017$ \\
$9^{2}_{7}$ & $81.1437$ & $81.1261$ \\
$9^{2}_{8}$ & $80.9964$ & $80.9766$ \\
$9^{2}_{9}$ & $80.3174$ & $80.2999$ \\
$9^{2}_{10}$ & $80.3218$ & $80.3036$ \\
$9^{2}_{11}$ & $82.0329$ & $82.0140$ \\
$9^{2}_{12}$ & $81.9602$ & $81.9414$ \\
$9^{2}_{13}$ & $79.3468$ & $79.3280$ \\
$9^{2}_{14}$ & $80.7276$ & $80.7104$ \\
$9^{2}_{15}$ & $80.5659$ & $80.5458$ \\
$9^{2}_{16}$ & $81.3758$ & $81.3565$ \\
$9^{2}_{17}$ & $80.3223$ & $80.3022$ \\
$9^{2}_{18}$ & $81.7563$ & $81.7461$ \\
$9^{2}_{19}$ & $79.4706$ & $79.4491$ \\
$9^{2}_{20}$ & $80.1357$ & $80.1147$ \\
$9^{2}_{21}$ & $80.6010$ & $80.5824$ \\
$9^{2}_{22}$ & $81.0964$ & $81.0794$ \\
$9^{2}_{23}$ & $80.2592$ & $80.2379$ \\
$9^{2}_{24}$ & $81.7913$ & $81.7691$ \\
$9^{2}_{25}$ & $81.7810$ & $81.7630$ \\
$9^{2}_{26}$ & $82.1031$ & $82.0859$ \\
$9^{2}_{27}$ & $81.0288$ & $81.0141$ \\
$9^{2}_{28}$ & $81.3352$ & $81.3222$ \\
$9^{2}_{29}$ & $82.1606$ & $82.1445$ \\
$9^{2}_{30}$ & $82.2155$ & $82.1987$ \\
$9^{2}_{31}$ & $80.5732$ & $80.5561$ \\
$9^{2}_{32}$ & $81.4151$ & $81.3990$ \\
$9^{2}_{33}$ & $82.1790$ & $82.1612$ \\
\bottomrule
\end{tabular}
\hspace{0.25in}
\begin{tabular}[t]{lll} \toprule
Link & $\PRop$ & $\Rop$\\ \midrule 
$9^{2}_{33}$ & $82.1790$ & $82.1612$ \\
$9^{2}_{34}$ & $81.8490$ & $81.8320$ \\
$9^{2}_{35}$ & $81.2508$ & $81.2318$ \\
$9^{2}_{36}$ & $80.7066$ & $80.6866$ \\
$9^{2}_{37}$ & $81.9102$ & $81.8927$ \\
$9^{2}_{38}$ & $82.6750$ & $82.6561$ \\
$9^{2}_{39}$ & $81.8972$ & $81.8758$ \\
$9^{2}_{40}$ & $81.9680$ & $81.9460$ \\
$9^{2}_{41}$ & $83.6038$ & $83.5878$ \\
$9^{2}_{42}$ & $83.6304$ & $83.6092$ \\
$9^{2}_{43}$ & $66.2549$ & $66.2398$ \\
$9^{2}_{44}$ & $72.2072$ & $72.1896$ \\
$9^{2}_{45}$ & $71.0815$ & $71.0726$ \\
$9^{2}_{46}$ & $73.8347$ & $73.8215$ \\
$9^{2}_{47}$ & $69.9130$ & $69.8983$ \\
$9^{2}_{48}$ & $73.6563$ & $73.6426$ \\
$9^{2}_{49}$ & $66.0444$ & $66.0311$ \\
$9^{2}_{50}$ & $69.3353$ & $69.3284$ \\
$9^{2}_{51}$ & $70.5455$ & $70.5299$ \\
$9^{2}_{52}$ & $72.8271$ & $72.8106$ \\
$9^{2}_{53}$ & $68.0154$ & $68.0082$ \\
$9^{2}_{54}$ & $71.0240$ & $71.0089$ \\
$9^{2}_{55}$ & $73.8129$ & $73.7998$ \\
$9^{2}_{56}$ & $72.9013$ & $72.8833$ \\
$9^{2}_{57}$ & $72.2115$ & $72.1922$ \\
$9^{2}_{58}$ & $74.1685$ & $74.1499$ \\
$9^{2}_{59}$ & $72.3285$ & $72.3130$ \\
$9^{2}_{60}$ & $73.5589$ & $73.5442$ \\
$9^{2}_{61}$ & $69.3751$ & $69.3636$ \\
\addlinespace[0.45em]\cmidrule(r{1.5em}l{1.5em}){1-3}\addlinespace[0.45em]
$9^{3}_{1}$ & $81.1522$ & $81.1333$ \\
$9^{3}_{2}$ & $81.7304$ & $81.7190$ \\
$9^{3}_{3}$ & $82.2498$ & $82.2346$ \\
$9^{3}_{4}$ & $82.5202$ & $82.5029$ \\
$9^{3}_{5}$ & $80.2664$ & $80.2456$ \\
$9^{3}_{6}$ & $80.9434$ & $80.9258$ \\
$9^{3}_{7}$ & $82.0540$ & $82.0378$ \\
$9^{3}_{8}$ & $81.1278$ & $81.1107$ \\
$9^{3}_{9}$ & $81.5469$ & $81.5295$ \\
$9^{3}_{10}$ & $82.3146$ & $82.2964$ \\
$9^{3}_{11}$ & $82.0023$ & $81.9867$ \\
$9^{3}_{12}$ & $82.4811$ & $82.4608$ \\
$9^{3}_{13}$ & $72.2098$ & $72.2009$ \\
$9^{3}_{14}$ & $74.4319$ & $74.4205$ \\
$9^{3}_{15}$ & $74.2998$ & $74.2810$ \\
$9^{3}_{16}$ & $75.0113$ & $75.0003$ \\
\bottomrule
\end{tabular}
\hspace{0.25in}
\begin{tabular}[t]{lll} \toprule
Link & $\PRop$ & $\Rop$\\ \midrule 
$9^{3}_{16}$ & $75.0113$ & $75.0003$ \\
$9^{3}_{17}$ & $74.1280$ & $74.1159$ \\
$9^{3}_{18}$ & $72.4529$ & $72.4382$ \\
$9^{3}_{19}$ & $72.6412$ & $72.6275$ \\
$9^{3}_{20}$ & $75.9995$ & $75.9845$ \\
$9^{3}_{21}$ & $74.8967$ & $74.8908$ \\
\addlinespace[0.45em]\cmidrule(r{1.5em}l{1.5em}){1-3}\addlinespace[0.45em]
$9^{4}_{1}$ & $81.6096$ & $81.5927$ \\
\addlinespace[0.45em]\cmidrule(r{0.5em}l{0.5em}){1-3}\addlinespace[0.45em]
$10_{1}$ & $85.1146$ & $85.0947$ \\
$10_{2}$ & $85.6050$ & $85.5850$ \\
$10_{3}$ & $85.4483$ & $85.4278$ \\
$10_{4}$ & $85.8181$ & $85.7974$ \\
$10_{5}$ & $86.4952$ & $86.4741$ \\
$10_{6}$ & $86.8353$ & $86.8125$ \\
$10_{7}$ & $87.2979$ & $87.2775$ \\
$10_{8}$ & $85.8620$ & $85.8428$ \\
$10_{9}$ & $86.8410$ & $86.8222$ \\
$10_{10}$ & $87.2060$ & $87.1870$ \\
$10_{11}$ & $86.9848$ & $86.9630$ \\
$10_{12}$ & $87.1055$ & $87.0824$ \\
$10_{13}$ & $88.9148$ & $88.8989$ \\
$10_{14}$ & $88.3232$ & $88.3023$ \\
$10_{15}$ & $87.4787$ & $87.4606$ \\
$10_{16}$ & $87.4946$ & $87.4684$ \\
$10_{17}$ & $87.0473$ & $87.0277$ \\
$10_{18}$ & $88.4257$ & $88.4092$ \\
$10_{19}$ & $87.5311$ & $87.5099$ \\
$10_{20}$ & $86.8731$ & $86.8514$ \\
$10_{21}$ & $87.0497$ & $87.0343$ \\
$10_{22}$ & $87.2417$ & $87.2182$ \\
$10_{23}$ & $88.7048$ & $88.6901$ \\
$10_{24}$ & $88.4160$ & $88.3963$ \\
$10_{25}$ & $88.7767$ & $88.7587$ \\
$10_{26}$ & $88.4564$ & $88.4328$ \\
$10_{27}$ & $89.8944$ & $89.8795$ \\
$10_{28}$ & $87.5276$ & $87.5061$ \\
$10_{29}$ & $89.2410$ & $89.2238$ \\
$10_{30}$ & $88.3731$ & $88.3558$ \\
$10_{31}$ & $88.2624$ & $88.2401$ \\
$10_{32}$ & $88.6809$ & $88.6597$ \\
$10_{33}$ & $88.2952$ & $88.2744$ \\
$10_{34}$ & $87.0322$ & $87.0101$ \\
$10_{35}$ & $88.0891$ & $88.0697$ \\
$10_{36}$ & $88.0424$ & $88.0233$ \\
$10_{37}$ & $88.1319$ & $88.1153$ \\
\bottomrule
\end{tabular}
\hspace{0.25in}
\end{table}
\begin{table}[h]
\caption{\label{ByKnotTablethree}Part 3 of Ropelengths of Tight Knots and Links by Knot Type}
\begin{tabular}[t]{lll} \toprule
Link & $\PRop$ & $\Rop$\\ \midrule 
$10_{37}$ & $88.1319$ & $88.1153$ \\
$10_{38}$ & $88.3478$ & $88.3257$ \\
$10_{39}$ & $88.3562$ & $88.3323$ \\
$10_{40}$ & $89.2659$ & $89.2464$ \\
$10_{41}$ & $89.0725$ & $89.0553$ \\
$10_{42}$ & $89.9013$ & $89.8857$ \\
$10_{43}$ & $89.3512$ & $89.3366$ \\
$10_{44}$ & $88.8714$ & $88.8515$ \\
$10_{45}$ & $89.4836$ & $89.4621$ \\
$10_{46}$ & $86.4718$ & $86.4487$ \\
$10_{47}$ & $87.3043$ & $87.2821$ \\
$10_{48}$ & $87.3814$ & $87.3643$ \\
$10_{49}$ & $88.2914$ & $88.2705$ \\
$10_{50}$ & $87.3876$ & $87.3716$ \\
$10_{51}$ & $88.3209$ & $88.3002$ \\
$10_{52}$ & $88.0719$ & $88.0565$ \\
$10_{53}$ & $88.8361$ & $88.8180$ \\
$10_{54}$ & $87.5336$ & $87.5127$ \\
$10_{55}$ & $88.3760$ & $88.3699$ \\
$10_{56}$ & $89.0160$ & $88.9973$ \\
$10_{57}$ & $89.6126$ & $89.5946$ \\
$10_{58}$ & $88.9623$ & $88.9445$ \\
$10_{59}$ & $89.2228$ & $89.2090$ \\
$10_{60}$ & $89.3397$ & $89.3190$ \\
$10_{61}$ & $86.4755$ & $86.4561$ \\
$10_{62}$ & $87.5318$ & $87.5071$ \\
$10_{63}$ & $88.4046$ & $88.3861$ \\
$10_{64}$ & $87.4878$ & $87.4742$ \\
$10_{65}$ & $88.3918$ & $88.3725$ \\
$10_{66}$ & $89.0275$ & $89.0047$ \\
$10_{67}$ & $88.4741$ & $88.4534$ \\
$10_{68}$ & $88.1199$ & $88.1013$ \\
$10_{69}$ & $89.0983$ & $89.0778$ \\
$10_{70}$ & $89.2068$ & $89.1846$ \\
$10_{71}$ & $89.0853$ & $89.0699$ \\
$10_{72}$ & $89.1974$ & $89.1779$ \\
$10_{73}$ & $89.5332$ & $89.5130$ \\
$10_{74}$ & $88.1285$ & $88.1077$ \\
$10_{75}$ & $88.9725$ & $88.9524$ \\
$10_{76}$ & $88.3673$ & $88.3479$ \\
$10_{77}$ & $88.5689$ & $88.5471$ \\
$10_{78}$ & $88.5548$ & $88.5322$ \\
$10_{79}$ & $88.9647$ & $88.9488$ \\
$10_{80}$ & $89.1669$ & $89.1556$ \\
\bottomrule
\end{tabular}
\hspace{0.25in}
\begin{tabular}[t]{lll} \toprule
Link & $\PRop$ & $\Rop$\\ \midrule 
$10_{80}$ & $89.1669$ & $89.1556$ \\
$10_{81}$ & $90.0181$ & $90.0007$ \\
$10_{82}$ & $88.7011$ & $88.6801$ \\
$10_{83}$ & $89.5544$ & $89.5314$ \\
$10_{84}$ & $89.6518$ & $89.6788$ \\
$10_{85}$ & $87.8403$ & $87.8164$ \\
$10_{86}$ & $88.7050$ & $88.6851$ \\
$10_{87}$ & $89.1363$ & $89.1173$ \\
$10_{88}$ & $89.5638$ & $89.5461$ \\
$10_{89}$ & $89.4343$ & $89.4178$ \\
$10_{90}$ & $88.9330$ & $88.9115$ \\
$10_{91}$ & $88.9611$ & $88.9435$ \\
$10_{92}$ & $89.6200$ & $89.6011$ \\
$10_{93}$ & $88.3962$ & $88.3773$ \\
$10_{94}$ & $88.8514$ & $88.8306$ \\
$10_{95}$ & $90.0056$ & $89.9848$ \\
$10_{96}$ & $89.5493$ & $89.5284$ \\
$10_{97}$ & $89.4340$ & $89.4163$ \\
$10_{98}$ & $89.7172$ & $89.6969$ \\
$10_{99}$ & $88.8926$ & $88.8734$ \\
$10_{100}$ & $88.7124$ & $88.6927$ \\
$10_{101}$ & $89.7344$ & $89.7210$ \\
$10_{102}$ & $88.7969$ & $88.7734$ \\
$10_{103}$ & $88.7971$ & $88.7914$ \\
$10_{104}$ & $91.7476$ & $91.7280$ \\
$10_{105}$ & $89.8260$ & $89.8055$ \\
$10_{106}$ & $89.1546$ & $89.1319$ \\
$10_{107}$ & $89.7525$ & $89.7356$ \\
$10_{108}$ & $88.5137$ & $88.4932$ \\
$10_{109}$ & $91.1966$ & $91.1789$ \\
$10_{110}$ & $89.6275$ & $89.6114$ \\
$10_{111}$ & $89.6677$ & $89.6438$ \\
$10_{112}$ & $89.5744$ & $89.5529$ \\
$10_{113}$ & $90.2239$ & $90.2141$ \\
$10_{114}$ & $89.3062$ & $89.2856$ \\
$10_{115}$ & $90.4340$ & $90.4176$ \\
$10_{116}$ & $90.2703$ & $90.2583$ \\
$10_{117}$ & $89.5335$ & $89.5245$ \\
$10_{118}$ & $89.5261$ & $89.5094$ \\
$10_{119}$ & $90.1394$ & $90.1226$ \\
$10_{120}$ & $90.1862$ & $90.1674$ \\
$10_{121}$ & $89.9375$ & $89.9240$ \\
$10_{122}$ & $89.8258$ & $89.8094$ \\
$10_{123}$ & $92.3646$ & $92.3565$ \\
\bottomrule
\end{tabular}
\hspace{0.25in}
\begin{tabular}[t]{lll} \toprule
Link & $\PRop$ & $\Rop$\\ \midrule 
$10_{123}$ & $92.3646$ & $92.3565$ \\
$10_{124}$ & $71.0894$ & $71.0739$ \\
$10_{125}$ & $74.9907$ & $74.9778$ \\
$10_{126}$ & $77.6202$ & $77.6026$ \\
$10_{127}$ & $80.0235$ & $80.0124$ \\
$10_{128}$ & $76.4187$ & $76.4026$ \\
$10_{129}$ & $78.5739$ & $78.5553$ \\
$10_{130}$ & $78.8499$ & $78.8356$ \\
$10_{131}$ & $81.2871$ & $81.2678$ \\
$10_{132}$ & $74.7441$ & $74.7330$ \\
$10_{133}$ & $77.1813$ & $77.1631$ \\
$10_{134}$ & $78.6521$ & $78.6377$ \\
$10_{135}$ & $81.2305$ & $81.2157$ \\
$10_{136}$ & $78.0398$ & $78.0276$ \\
$10_{137}$ & $79.6352$ & $79.6185$ \\
$10_{138}$ & $82.5504$ & $82.5320$ \\
$10_{139}$ & $72.9001$ & $72.8944$ \\
$10_{140}$ & $73.8610$ & $73.8477$ \\
$10_{141}$ & $76.9687$ & $76.9543$ \\
$10_{142}$ & $75.8951$ & $75.8754$ \\
$10_{143}$ & $78.2422$ & $78.2307$ \\
$10_{144}$ & $81.4378$ & $81.4275$ \\
$10_{145}$ & $75.9194$ & $75.9076$ \\
$10_{146}$ & $79.7416$ & $79.7322$ \\
$10_{147}$ & $79.1666$ & $79.1571$ \\
$10_{148}$ & $79.0893$ & $79.0742$ \\
$10_{149}$ & $81.0500$ & $81.0318$ \\
$10_{150}$ & $80.1392$ & $80.1219$ \\
$10_{151}$ & $81.8414$ & $81.8207$ \\
$10_{152}$ & $79.1715$ & $79.1556$ \\
$10_{153}$ & $80.4764$ & $80.4648$ \\
$10_{154}$ & $81.5405$ & $81.5218$ \\
$10_{155}$ & $78.0648$ & $78.0503$ \\
$10_{156}$ & $79.5639$ & $79.5443$ \\
$10_{157}$ & $81.4731$ & $81.4568$ \\
$10_{158}$ & $81.6398$ & $81.6220$ \\
$10_{159}$ & $79.8863$ & $79.8692$ \\
$10_{160}$ & $78.1529$ & $78.1472$ \\
$10_{161}$ & $74.5460$ & $74.5302$ \\
$10_{162}$ & $81.0033$ & $80.9838$ \\
$10_{163}$ & $82.6629$ & $82.6548$ \\
$10_{164}$ & $82.1862$ & $82.1698$ \\
$10_{165}$ & $82.8211$ & $82.8040$ \\
\bottomrule
\end{tabular}
\hspace{0.25in}
\end{table}

\begin{table}[h]
\caption{\label{ByRopTableone}Part 1 of Knot and Link Types sorted by Ropelength}
\begin{tabular}[t]{l} \toprule
Link\\ \midrule 
$2^{2}_{1}$ \\
$3_{1}$ \\
$4^{2}_{1}$ \\
$4_{1}$ \\
$5_{1}$ \\
$5_{2}$ \\
$5^{2}_{1}$ \\
$6^{3}_{3}$ \\
$6^{2}_{1}$ \\
$7^{2}_{7}$ \\
$6^{2}_{2}$ \\
$6_{1}$ \\
$6_{2}$ \\
$7^{2}_{8}$ \\
$6^{3}_{1}$ \\
$6_{3}$ \\
$6^{3}_{2}$ \\
$6^{2}_{3}$ \\
$8^{3}_{7}$ \\
$8_{19}$ \\
$7_{1}$ \\
$8_{20}$ \\
$7_{2}$ \\
$7_{3}$ \\
$7^{2}_{1}$ \\
$7_{4}$ \\
$8^{2}_{15}$ \\
$8^{3}_{8}$ \\
$7^{2}_{2}$ \\
$7^{2}_{4}$ \\
$7_{5}$ \\
$7^{2}_{3}$ \\
$8_{21}$ \\
$7_{7}$ \\
$7_{6}$ \\
$7^{3}_{1}$ \\
$9^{2}_{49}$ \\
$7^{2}_{5}$ \\
$9^{2}_{43}$ \\
$8^{4}_{3}$ \\
$7^{2}_{6}$ \\
$8^{3}_{9}$ \\
$8^{2}_{16}$ \\
$8^{4}_{2}$ \\
$9^{2}_{53}$ \\
$8^{2}_{1}$ \\
\bottomrule
\end{tabular}
\hspace{0.25in}
\begin{tabular}[t]{l} \toprule
Link\\ \midrule 
$8^{2}_{1}$ \\
$8^{3}_{10}$ \\
$9_{46}$ \\
$9^{2}_{50}$ \\
$9^{2}_{61}$ \\
$9_{42}$ \\
$9^{2}_{47}$ \\
$9^{2}_{51}$ \\
$8_{1}$ \\
$9^{2}_{54}$ \\
$8^{2}_{2}$ \\
$9^{2}_{45}$ \\
$10_{124}$ \\
$8_{3}$ \\
$8_{2}$ \\
$8_{4}$ \\
$9_{43}$ \\
$9_{44}$ \\
$8_{5}$ \\
$9^{2}_{44}$ \\
$9^{2}_{57}$ \\
$9^{3}_{13}$ \\
$8_{7}$ \\
$8^{3}_{1}$ \\
$9^{2}_{59}$ \\
$9^{3}_{18}$ \\
$8_{9}$ \\
$8_{6}$ \\
$8^{2}_{4}$ \\
$9^{3}_{19}$ \\
$8^{2}_{3}$ \\
$8_{8}$ \\
$8_{13}$ \\
$9^{2}_{52}$ \\
$9^{2}_{56}$ \\
$10_{139}$ \\
$8_{11}$ \\
$8^{3}_{2}$ \\
$8_{10}$ \\
$8^{2}_{11}$ \\
$8^{2}_{6}$ \\
$8^{2}_{9}$ \\
$8^{3}_{5}$ \\
$8_{16}$ \\
$9^{2}_{60}$ \\
$9^{2}_{48}$ \\
\bottomrule
\end{tabular}
\hspace{0.25in}
\begin{tabular}[t]{l} \toprule
Link\\ \midrule 
$9^{2}_{48}$ \\
$8^{2}_{10}$ \\
$8^{2}_{14}$ \\
$8^{2}_{8}$ \\
$8_{14}$ \\
$8^{2}_{12}$ \\
$9^{2}_{55}$ \\
$9^{2}_{46}$ \\
$10_{140}$ \\
$8_{15}$ \\
$9_{49}$ \\
$8^{2}_{5}$ \\
$8_{12}$ \\
$9_{48}$ \\
$9_{45}$ \\
$9^{3}_{17}$ \\
$8^{2}_{13}$ \\
$9^{2}_{58}$ \\
$9^{3}_{15}$ \\
$8^{2}_{7}$ \\
$9^{3}_{14}$ \\
$8_{17}$ \\
$10_{161}$ \\
$8^{3}_{6}$ \\
$10_{132}$ \\
$8^{3}_{3}$ \\
$9_{47}$ \\
$9^{3}_{21}$ \\
$8_{18}$ \\
$10_{125}$ \\
$8^{3}_{4}$ \\
$9^{3}_{16}$ \\
$8^{4}_{1}$ \\
$9_{1}$ \\
$10_{142}$ \\
$10_{145}$ \\
$9^{3}_{20}$ \\
$10_{128}$ \\
$10_{141}$ \\
$10_{133}$ \\
$10_{126}$ \\
$10_{136}$ \\
$10_{155}$ \\
$9_{2}$ \\
$10_{160}$ \\
$9_{3}$ \\
\bottomrule
\end{tabular}
\hspace{0.25in}
\begin{tabular}[t]{l} \toprule
Link\\ \midrule 
$9_{3}$ \\
$10_{143}$ \\
$9_{4}$ \\
$10_{129}$ \\
$9^{2}_{1}$ \\
$10_{134}$ \\
$9_{5}$ \\
$9^{2}_{4}$ \\
$10_{130}$ \\
$10_{148}$ \\
$10_{152}$ \\
$10_{147}$ \\
$9_{35}$ \\
$9^{2}_{13}$ \\
$9^{2}_{19}$ \\
$9^{2}_{2}$ \\
$10_{156}$ \\
$9_{6}$ \\
$10_{137}$ \\
$9^{2}_{5}$ \\
$9_{7}$ \\
$10_{146}$ \\
$9_{10}$ \\
$10_{159}$ \\
$9_{9}$ \\
$9^{2}_{3}$ \\
$9_{14}$ \\
$9_{8}$ \\
$10_{127}$ \\
$9_{12}$ \\
$9^{2}_{6}$ \\
$9_{16}$ \\
$9^{2}_{20}$ \\
$9_{11}$ \\
$10_{150}$ \\
$9_{36}$ \\
$9_{20}$ \\
$9^{2}_{23}$ \\
$9^{3}_{5}$ \\
$9_{13}$ \\
$9^{2}_{9}$ \\
$9^{2}_{17}$ \\
$9^{2}_{10}$ \\
$9_{17}$ \\
$10_{153}$ \\
$9^{2}_{15}$ \\
\bottomrule
\end{tabular}
\hspace{0.25in}
\begin{tabular}[t]{l} \toprule
Link\\ \midrule 
$9^{2}_{15}$ \\
$9^{2}_{31}$ \\
$9^{2}_{21}$ \\
$9^{2}_{36}$ \\
$9^{2}_{14}$ \\
$9_{15}$ \\
$9_{19}$ \\
$9_{26}$ \\
$9^{3}_{6}$ \\
$9_{24}$ \\
$9^{2}_{8}$ \\
$10_{162}$ \\
$9^{2}_{27}$ \\
$10_{149}$ \\
$9_{22}$ \\
$9^{2}_{22}$ \\
$9_{21}$ \\
$9^{3}_{8}$ \\
$9_{25}$ \\
$9^{2}_{7}$ \\
$9^{3}_{1}$ \\
$9_{28}$ \\
$9_{37}$ \\
$9_{27}$ \\
$9_{29}$ \\
$10_{135}$ \\
$9^{2}_{35}$ \\
$10_{131}$ \\
$9_{23}$ \\
$9^{2}_{28}$ \\
$9_{41}$ \\
$9^{2}_{16}$ \\
$9^{2}_{32}$ \\
$10_{144}$ \\
$10_{157}$ \\
$9_{30}$ \\
$9_{32}$ \\
$10_{154}$ \\
$9^{3}_{9}$ \\
$9_{18}$ \\
$9^{4}_{1}$ \\
$10_{158}$ \\
$9_{40}$ \\
$9_{31}$ \\
$9^{3}_{2}$ \\
$9^{2}_{18}$ \\
\bottomrule
\end{tabular}
\hspace{0.25in}
\begin{tabular}[t]{l} \toprule
Link\\ \midrule 
$9^{2}_{18}$ \\
$9^{2}_{25}$ \\
$9^{2}_{24}$ \\
$9_{38}$ \\
$10_{151}$ \\
$9_{39}$ \\
$9^{2}_{34}$ \\
$9^{2}_{39}$ \\
$9^{2}_{37}$ \\
$9^{2}_{12}$ \\
$9^{2}_{40}$ \\
$9^{3}_{11}$ \\
$9^{2}_{11}$ \\
$9^{3}_{7}$ \\
$9^{2}_{26}$ \\
$9^{2}_{29}$ \\
$9^{2}_{33}$ \\
$10_{164}$ \\
$9_{34}$ \\
$9^{2}_{30}$ \\
$9^{3}_{3}$ \\
$9^{3}_{10}$ \\
$9^{3}_{12}$ \\
$9^{3}_{4}$ \\
$10_{138}$ \\
$10_{163}$ \\
$9^{2}_{38}$ \\
$9_{33}$ \\
$10_{165}$ \\
$9^{2}_{41}$ \\
$9^{2}_{42}$ \\
$10_{1}$ \\
$10_{3}$ \\
$10_{2}$ \\
$10_{4}$ \\
$10_{8}$ \\
$10_{46}$ \\
$10_{61}$ \\
$10_{5}$ \\
$10_{6}$ \\
$10_{9}$ \\
$10_{20}$ \\
$10_{11}$ \\
$10_{34}$ \\
$10_{17}$ \\
$10_{21}$ \\
\bottomrule
\end{tabular}
\hspace{0.25in}
\begin{tabular}[t]{l} \toprule
Link\\ \midrule 
$10_{21}$ \\
$10_{12}$ \\
$10_{10}$ \\
$10_{22}$ \\
$10_{7}$ \\
$10_{47}$ \\
$10_{48}$ \\
$10_{50}$ \\
$10_{15}$ \\
$10_{16}$ \\
$10_{64}$ \\
$10_{28}$ \\
$10_{62}$ \\
$10_{19}$ \\
$10_{54}$ \\
$10_{85}$ \\
$10_{36}$ \\
$10_{52}$ \\
$10_{35}$ \\
$10_{68}$ \\
$10_{74}$ \\
$10_{37}$ \\
$10_{31}$ \\
$10_{49}$ \\
$10_{33}$ \\
$10_{51}$ \\
$10_{14}$ \\
$10_{38}$ \\
$10_{39}$ \\
$10_{76}$ \\
$10_{30}$ \\
$10_{55}$ \\
$10_{65}$ \\
$10_{93}$ \\
$10_{63}$ \\
$10_{24}$ \\
$10_{18}$ \\
$10_{26}$ \\
$10_{67}$ \\
$10_{108}$ \\
$10_{78}$ \\
$10_{77}$ \\
$10_{32}$ \\
$10_{82}$ \\
$10_{86}$ \\
$10_{23}$ \\
\bottomrule
\end{tabular}
\hspace{0.25in}
\begin{tabular}[t]{l} \toprule
Link\\ \midrule 
$10_{23}$ \\
$10_{100}$ \\
$10_{25}$ \\
$10_{102}$ \\
$10_{103}$ \\
$10_{53}$ \\
$10_{94}$ \\
$10_{44}$ \\
$10_{99}$ \\
$10_{13}$ \\
$10_{90}$ \\
$10_{91}$ \\
$10_{58}$ \\
$10_{79}$ \\
$10_{75}$ \\
$10_{56}$ \\
$10_{66}$ \\
$10_{41}$ \\
$10_{71}$ \\
$10_{69}$ \\
$10_{87}$ \\
$10_{106}$ \\
$10_{80}$ \\
$10_{72}$ \\
$10_{70}$ \\
$10_{59}$ \\
$10_{29}$ \\
$10_{40}$ \\
$10_{114}$ \\
$10_{60}$ \\
$10_{43}$ \\
$10_{97}$ \\
$10_{89}$ \\
$10_{45}$ \\
$10_{118}$ \\
$10_{73}$ \\
$10_{117}$ \\
$10_{96}$ \\
$10_{83}$ \\
$10_{88}$ \\
$10_{112}$ \\
$10_{57}$ \\
$10_{92}$ \\
$10_{110}$ \\
$10_{111}$ \\
$10_{84}$ \\
\bottomrule
\end{tabular}
\hspace{0.25in}
\end{table}
\begin{table}[h]
\caption{\label{ByRopTabletwo}Part 2 of Knot and Link Types sorted by Ropelength}
\begin{tabular}[t]{l} \toprule
Link\\ \midrule 
$10_{84}$ \\
$10_{98}$ \\
$10_{101}$ \\
$10_{107}$ \\
\bottomrule
\end{tabular}
\hspace{0.25in}
\begin{tabular}[t]{l} \toprule
Link\\ \midrule 
$10_{107}$ \\
$10_{105}$ \\
$10_{122}$ \\
$10_{27}$ \\
\bottomrule
\end{tabular}
\hspace{0.25in}
\begin{tabular}[t]{l} \toprule
Link\\ \midrule 
$10_{27}$ \\
$10_{42}$ \\
$10_{121}$ \\
$10_{95}$ \\
\bottomrule
\end{tabular}
\hspace{0.25in}
\begin{tabular}[t]{l} \toprule
Link\\ \midrule 
$10_{95}$ \\
$10_{81}$ \\
$10_{119}$ \\
$10_{120}$ \\
\bottomrule
\end{tabular}
\hspace{0.25in}
\begin{tabular}[t]{l} \toprule
Link\\ \midrule 
$10_{120}$ \\
$10_{113}$ \\
$10_{116}$ \\
$10_{115}$ \\
\bottomrule
\end{tabular}
\hspace{0.25in}
\begin{tabular}[t]{l} \toprule
Link\\ \midrule 
$10_{115}$ \\
$10_{109}$ \\
$10_{104}$ \\
$10_{123}$ \\
\bottomrule
\end{tabular}
\hspace{0.25in}
\begin{tabular}[t]{l} \toprule
Link\\ \midrule 
$10_{123}$ \\
\bottomrule
\end{tabular}
\hspace{0.25in}
\end{table}

\begin{table}[h]
\caption{\label{ResidualTableone}Part 1 of Residuals of Tight Knots and Links by Knot Type}
\begin{tabular}[t]{ll} \toprule
Link & Residual\\ \midrule 
$2^{2}_{1}$ & $2.45124e-05$ \\
\addlinespace[0.45em]\cmidrule(r{0.5em}l{0.5em}){1-2}\addlinespace[0.45em]
$3_{1}$ & $0.00621792$ \\
\addlinespace[0.45em]\cmidrule(r{0.5em}l{0.5em}){1-2}\addlinespace[0.45em]
$4_{1}$ & $0.000996335$ \\
\addlinespace[0.45em]\cmidrule(r{1.5em}l{1.5em}){1-2}\addlinespace[0.45em]
$4^{2}_{1}$ & $0.000999549$ \\
\addlinespace[0.45em]\cmidrule(r{0.5em}l{0.5em}){1-2}\addlinespace[0.45em]
$5_{1}$ & $0.00981995$ \\
$5_{2}$ & $0.00994775$ \\
\addlinespace[0.45em]\cmidrule(r{1.5em}l{1.5em}){1-2}\addlinespace[0.45em]
$5^{2}_{1}$ & $0.00998078$ \\
\addlinespace[0.45em]\cmidrule(r{0.5em}l{0.5em}){1-2}\addlinespace[0.45em]
$6_{1}$ & $0.000999592$ \\
$6_{2}$ & $0.00897204$ \\
$6_{3}$ & $0.000979541$ \\
\addlinespace[0.45em]\cmidrule(r{1.5em}l{1.5em}){1-2}\addlinespace[0.45em]
$6^{2}_{1}$ & $0.000999952$ \\
$6^{2}_{2}$ & $0.000999833$ \\
$6^{2}_{3}$ & $0.00999004$ \\
\addlinespace[0.45em]\cmidrule(r{1.5em}l{1.5em}){1-2}\addlinespace[0.45em]
$6^{3}_{1}$ & $0.00998537$ \\
$6^{3}_{2}$ & $0.000705159$ \\
$6^{3}_{3}$ & $0.00627026$ \\
\addlinespace[0.45em]\cmidrule(r{0.5em}l{0.5em}){1-2}\addlinespace[0.45em]
$7_{1}$ & $0.00105833$ \\
$7_{2}$ & $0.00998149$ \\
$7_{3}$ & $0.00999358$ \\
$7_{4}$ & $0.00100877$ \\
$7_{5}$ & $0.000999532$ \\
$7_{6}$ & $0.000979869$ \\
$7_{7}$ & $0.00100393$ \\
\addlinespace[0.45em]\cmidrule(r{1.5em}l{1.5em}){1-2}\addlinespace[0.45em]
$7^{2}_{1}$ & $0.000999487$ \\
$7^{2}_{2}$ & $0.00101952$ \\
$7^{2}_{3}$ & $0.000999871$ \\
$7^{2}_{4}$ & $0.00099954$ \\
$7^{2}_{5}$ & $0.000999894$ \\
$7^{2}_{6}$ & $0.00100556$ \\
$7^{2}_{7}$ & $0.00320787$ \\
$7^{2}_{8}$ & $0.0018494$ \\
\addlinespace[0.45em]\cmidrule(r{1.5em}l{1.5em}){1-2}\addlinespace[0.45em]
$7^{3}_{1}$ & $0.000999748$ \\
\addlinespace[0.45em]\cmidrule(r{0.5em}l{0.5em}){1-2}\addlinespace[0.45em]
$8_{1}$ & $0.00898769$ \\
$8_{2}$ & $0.000982684$ \\
\bottomrule
\end{tabular}
\hspace{0.25in}
\begin{tabular}[t]{ll} \toprule
Link & Residual\\ \midrule 
$8_{2}$ & $0.000982684$ \\
$8_{3}$ & $0.00100028$ \\
$8_{4}$ & $0.00100103$ \\
$8_{5}$ & $0.00100033$ \\
$8_{6}$ & $0.000999848$ \\
$8_{7}$ & $0.00101551$ \\
$8_{8}$ & $0.000981272$ \\
$8_{9}$ & $0.000999932$ \\
$8_{10}$ & $0.000978418$ \\
$8_{11}$ & $0.000979921$ \\
$8_{12}$ & $0.00998976$ \\
$8_{13}$ & $0.000993117$ \\
$8_{14}$ & $0.000981486$ \\
$8_{15}$ & $0.0099948$ \\
$8_{16}$ & $0.000981316$ \\
$8_{17}$ & $0.00999085$ \\
$8_{18}$ & $0.000900015$ \\
$8_{19}$ & $0.000998339$ \\
$8_{20}$ & $0.00099998$ \\
$8_{21}$ & $0.000999988$ \\
\addlinespace[0.45em]\cmidrule(r{1.5em}l{1.5em}){1-2}\addlinespace[0.45em]
$8^{2}_{1}$ & $0.00100142$ \\
$8^{2}_{2}$ & $0.000979836$ \\
$8^{2}_{3}$ & $0.000999961$ \\
$8^{2}_{4}$ & $0.00216462$ \\
$8^{2}_{5}$ & $0.00999516$ \\
$8^{2}_{6}$ & $0.00100295$ \\
$8^{2}_{7}$ & $0.000999802$ \\
$8^{2}_{8}$ & $0.000999762$ \\
$8^{2}_{9}$ & $0.000979774$ \\
$8^{2}_{10}$ & $0.000999858$ \\
$8^{2}_{11}$ & $0.00997927$ \\
$8^{2}_{12}$ & $0.000999968$ \\
$8^{2}_{13}$ & $0.0010008$ \\
$8^{2}_{14}$ & $0.00101123$ \\
$8^{2}_{15}$ & $0.00099994$ \\
$8^{2}_{16}$ & $0.000997563$ \\
\addlinespace[0.45em]\cmidrule(r{1.5em}l{1.5em}){1-2}\addlinespace[0.45em]
$8^{3}_{1}$ & $0.00100589$ \\
$8^{3}_{2}$ & $0.000999904$ \\
$8^{3}_{3}$ & $0.00100014$ \\
$8^{3}_{4}$ & $0.00999606$ \\
$8^{3}_{5}$ & $0.000995844$ \\
$8^{3}_{6}$ & $0.00099824$ \\
$8^{3}_{7}$ & $0.00119532$ \\
$8^{3}_{8}$ & $0.00100655$ \\
\bottomrule
\end{tabular}
\hspace{0.25in}
\begin{tabular}[t]{ll} \toprule
Link & Residual\\ \midrule 
$8^{3}_{8}$ & $0.00100655$ \\
$8^{3}_{9}$ & $0.000980533$ \\
$8^{3}_{10}$ & $0.0208108$ \\
\addlinespace[0.45em]\cmidrule(r{1.5em}l{1.5em}){1-2}\addlinespace[0.45em]
$8^{4}_{1}$ & $0.00100006$ \\
$8^{4}_{2}$ & $0.000999682$ \\
$8^{4}_{3}$ & $0.780186$ \\
\addlinespace[0.45em]\cmidrule(r{0.5em}l{0.5em}){1-2}\addlinespace[0.45em]
$9_{1}$ & $0.00802077$ \\
$9_{2}$ & $0.00997484$ \\
$9_{3}$ & $0.00998254$ \\
$9_{4}$ & $8.64059e-05$ \\
$9_{5}$ & $0.00999417$ \\
$9_{6}$ & $0.000980197$ \\
$9_{7}$ & $0.000979897$ \\
$9_{8}$ & $0.00101007$ \\
$9_{9}$ & $0.000999938$ \\
$9_{10}$ & $0.00113523$ \\
$9_{11}$ & $0.000981742$ \\
$9_{12}$ & $0.000979842$ \\
$9_{13}$ & $0.00999582$ \\
$9_{14}$ & $0.000984327$ \\
$9_{15}$ & $0.000979831$ \\
$9_{16}$ & $0.000999818$ \\
$9_{17}$ & $0.00100032$ \\
$9_{18}$ & $0.00992217$ \\
$9_{19}$ & $0.000981217$ \\
$9_{20}$ & $0.00100005$ \\
$9_{21}$ & $0.0010001$ \\
$9_{22}$ & $0.000998846$ \\
$9_{23}$ & $0.000979562$ \\
$9_{24}$ & $0.000999907$ \\
$9_{25}$ & $0.000977105$ \\
$9_{26}$ & $0.00100048$ \\
$9_{27}$ & $0.00999324$ \\
$9_{28}$ & $0.00996501$ \\
$9_{29}$ & $0.000979844$ \\
$9_{30}$ & $0.000979942$ \\
$9_{31}$ & $0.000979062$ \\
$9_{32}$ & $0.000997746$ \\
$9_{33}$ & $0.00100114$ \\
$9_{34}$ & $0.000999697$ \\
$9_{35}$ & $0.000981383$ \\
$9_{36}$ & $0.000978472$ \\
$9_{37}$ & $0.00999228$ \\
$9_{38}$ & $0.000978978$ \\
\bottomrule
\end{tabular}
\hspace{0.25in}
\begin{tabular}[t]{ll} \toprule
Link & Residual\\ \midrule 
$9_{38}$ & $0.000978978$ \\
$9_{39}$ & $0.000999482$ \\
$9_{40}$ & $0.000999343$ \\
$9_{41}$ & $0.00899161$ \\
$9_{42}$ & $0.000999996$ \\
$9_{43}$ & $0.00898749$ \\
$9_{44}$ & $0.000999789$ \\
$9_{45}$ & $0.0099754$ \\
$9_{46}$ & $0.00099973$ \\
$9_{47}$ & $0.000998991$ \\
$9_{48}$ & $0.00998933$ \\
$9_{49}$ & $0.00099957$ \\
\addlinespace[0.45em]\cmidrule(r{1.5em}l{1.5em}){1-2}\addlinespace[0.45em]
$9^{2}_{1}$ & $0.00107787$ \\
$9^{2}_{2}$ & $0.00100115$ \\
$9^{2}_{3}$ & $0.00100055$ \\
$9^{2}_{4}$ & $0.00099991$ \\
$9^{2}_{5}$ & $0.00100118$ \\
$9^{2}_{6}$ & $0.00126944$ \\
$9^{2}_{7}$ & $0.00104121$ \\
$9^{2}_{8}$ & $0.00100133$ \\
$9^{2}_{9}$ & $0.000999724$ \\
$9^{2}_{10}$ & $0.00140283$ \\
$9^{2}_{11}$ & $0.000999221$ \\
$9^{2}_{12}$ & $0.00100137$ \\
$9^{2}_{13}$ & $0.00100112$ \\
$9^{2}_{14}$ & $0.000999788$ \\
$9^{2}_{15}$ & $0.000999236$ \\
$9^{2}_{16}$ & $0.00605$ \\
$9^{2}_{17}$ & $0.00899775$ \\
$9^{2}_{18}$ & $0.000999648$ \\
$9^{2}_{19}$ & $0.00100405$ \\
$9^{2}_{20}$ & $0.000999853$ \\
$9^{2}_{21}$ & $0.00898977$ \\
$9^{2}_{22}$ & $0.00943088$ \\
$9^{2}_{23}$ & $0.000998181$ \\
$9^{2}_{24}$ & $0.000999946$ \\
$9^{2}_{25}$ & $0.0009999$ \\
$9^{2}_{26}$ & $0.00100243$ \\
$9^{2}_{27}$ & $0.00099997$ \\
$9^{2}_{28}$ & $0.000998883$ \\
$9^{2}_{29}$ & $0.00100157$ \\
$9^{2}_{30}$ & $0.00099989$ \\
$9^{2}_{31}$ & $0.000999523$ \\
$9^{2}_{32}$ & $0.00100012$ \\
$9^{2}_{33}$ & $0.000999711$ \\
\bottomrule
\end{tabular}
\hspace{0.25in}
\end{table}
\begin{table}[h]
\caption{\label{ResidualTabletwo}Part 2 of Residuals of Tight Knots and Links by Knot Type}
\begin{tabular}[t]{ll} \toprule
Link & Residual\\ \midrule 
$9^{2}_{33}$ & $0.000999711$ \\
$9^{2}_{34}$ & $0.00100169$ \\
$9^{2}_{35}$ & $0.000999778$ \\
$9^{2}_{36}$ & $0.00100172$ \\
$9^{2}_{37}$ & $0.000999058$ \\
$9^{2}_{38}$ & $0.000999748$ \\
$9^{2}_{39}$ & $0.000999888$ \\
$9^{2}_{40}$ & $0.000999835$ \\
$9^{2}_{41}$ & $0.00100037$ \\
$9^{2}_{42}$ & $0.000998679$ \\
$9^{2}_{43}$ & $0.00100109$ \\
$9^{2}_{44}$ & $0.00100838$ \\
$9^{2}_{45}$ & $0.00997492$ \\
$9^{2}_{46}$ & $0.00100042$ \\
$9^{2}_{47}$ & $0.00999831$ \\
$9^{2}_{48}$ & $0.000999984$ \\
$9^{2}_{49}$ & $0.000999984$ \\
$9^{2}_{50}$ & $0.000999226$ \\
$9^{2}_{51}$ & $0.000999443$ \\
$9^{2}_{52}$ & $0.000999958$ \\
$9^{2}_{53}$ & $0.00996962$ \\
$9^{2}_{54}$ & $0.000999703$ \\
$9^{2}_{55}$ & $0.00100064$ \\
$9^{2}_{56}$ & $0.000979788$ \\
$9^{2}_{57}$ & $0.00255237$ \\
$9^{2}_{58}$ & $0.000999155$ \\
$9^{2}_{59}$ & $0.00108631$ \\
$9^{2}_{60}$ & $0.000999312$ \\
$9^{2}_{61}$ & $0.00100091$ \\
\addlinespace[0.45em]\cmidrule(r{1.5em}l{1.5em}){1-2}\addlinespace[0.45em]
$9^{3}_{1}$ & $0.000999763$ \\
$9^{3}_{2}$ & $0.000999746$ \\
$9^{3}_{3}$ & $0.00100525$ \\
$9^{3}_{4}$ & $0.000999641$ \\
$9^{3}_{5}$ & $0.00100042$ \\
$9^{3}_{6}$ & $0.000999746$ \\
$9^{3}_{7}$ & $0.000999935$ \\
$9^{3}_{8}$ & $0.000999751$ \\
$9^{3}_{9}$ & $0.000996684$ \\
$9^{3}_{10}$ & $0.00099985$ \\
$9^{3}_{11}$ & $0.0010755$ \\
$9^{3}_{12}$ & $0.00100439$ \\
$9^{3}_{13}$ & $0.00980919$ \\
$9^{3}_{14}$ & $0.00900147$ \\
$9^{3}_{15}$ & $0.00112426$ \\
$9^{3}_{16}$ & $0.000999575$ \\
\bottomrule
\end{tabular}
\hspace{0.25in}
\begin{tabular}[t]{ll} \toprule
Link & Residual\\ \midrule 
$9^{3}_{16}$ & $0.000999575$ \\
$9^{3}_{17}$ & $0.247874$ \\
$9^{3}_{18}$ & $0.000999841$ \\
$9^{3}_{19}$ & $0.00101035$ \\
$9^{3}_{20}$ & $0.00100002$ \\
$9^{3}_{21}$ & $0.00100039$ \\
\addlinespace[0.45em]\cmidrule(r{1.5em}l{1.5em}){1-2}\addlinespace[0.45em]
$9^{4}_{1}$ & $0.000979958$ \\
\addlinespace[0.45em]\cmidrule(r{0.5em}l{0.5em}){1-2}\addlinespace[0.45em]
$10_{1}$ & $0.00101691$ \\
$10_{2}$ & $0.00100023$ \\
$10_{3}$ & $0.000991435$ \\
$10_{4}$ & $0.00100846$ \\
$10_{5}$ & $0.00100194$ \\
$10_{6}$ & $0.000979506$ \\
$10_{7}$ & $0.0097283$ \\
$10_{8}$ & $0.000980356$ \\
$10_{9}$ & $0.000979784$ \\
$10_{10}$ & $0.00999688$ \\
$10_{11}$ & $0.00760935$ \\
$10_{12}$ & $0.000991292$ \\
$10_{13}$ & $0.000999947$ \\
$10_{14}$ & $0.0010261$ \\
$10_{15}$ & $0.000979185$ \\
$10_{16}$ & $0.000985699$ \\
$10_{17}$ & $0.00998848$ \\
$10_{18}$ & $0.000979621$ \\
$10_{19}$ & $0.00098045$ \\
$10_{20}$ & $0.000979959$ \\
$10_{21}$ & $0.000999057$ \\
$10_{22}$ & $0.000991413$ \\
$10_{23}$ & $0.00999682$ \\
$10_{24}$ & $0.00166886$ \\
$10_{25}$ & $0.000994731$ \\
$10_{26}$ & $0.00098015$ \\
$10_{27}$ & $0.000999869$ \\
$10_{28}$ & $0.00996703$ \\
$10_{29}$ & $0.00116525$ \\
$10_{30}$ & $0.000999376$ \\
$10_{31}$ & $0.000979897$ \\
$10_{32}$ & $0.000979993$ \\
$10_{33}$ & $0.000979857$ \\
$10_{34}$ & $0.00098555$ \\
$10_{35}$ & $0.000982115$ \\
$10_{36}$ & $0.000979692$ \\
$10_{37}$ & $0.000999835$ \\
\bottomrule
\end{tabular}
\hspace{0.25in}
\begin{tabular}[t]{ll} \toprule
Link & Residual\\ \midrule 
$10_{37}$ & $0.000999835$ \\
$10_{38}$ & $0.000979821$ \\
$10_{39}$ & $0.000986038$ \\
$10_{40}$ & $0.00100863$ \\
$10_{41}$ & $0.00999693$ \\
$10_{42}$ & $0.000999751$ \\
$10_{43}$ & $0.000980157$ \\
$10_{44}$ & $0.00322255$ \\
$10_{45}$ & $0.000982692$ \\
$10_{46}$ & $0.00997656$ \\
$10_{47}$ & $0.000980999$ \\
$10_{48}$ & $0.00999602$ \\
$10_{49}$ & $0.000998073$ \\
$10_{50}$ & $0.000981787$ \\
$10_{51}$ & $0.00098231$ \\
$10_{52}$ & $0.000999419$ \\
$10_{53}$ & $0.00101025$ \\
$10_{54}$ & $0.00999263$ \\
$10_{55}$ & $0.00998728$ \\
$10_{56}$ & $0.00999185$ \\
$10_{57}$ & $0.000999798$ \\
$10_{58}$ & $0.000999966$ \\
$10_{59}$ & $0.00995441$ \\
$10_{60}$ & $0.000980266$ \\
$10_{61}$ & $0.0241498$ \\
$10_{62}$ & $0.00105699$ \\
$10_{63}$ & $0.00998227$ \\
$10_{64}$ & $0.00997603$ \\
$10_{65}$ & $0.00135295$ \\
$10_{66}$ & $0.000999872$ \\
$10_{67}$ & $0.000979823$ \\
$10_{68}$ & $0.00100695$ \\
$10_{69}$ & $0.000999786$ \\
$10_{70}$ & $0.000980057$ \\
$10_{71}$ & $0.00999226$ \\
$10_{72}$ & $0.000999942$ \\
$10_{73}$ & $0.00998888$ \\
$10_{74}$ & $0.000978382$ \\
$10_{75}$ & $0.000981812$ \\
$10_{76}$ & $0.000980892$ \\
$10_{77}$ & $0.00999768$ \\
$10_{78}$ & $0.000981017$ \\
$10_{79}$ & $0.0010001$ \\
$10_{80}$ & $0.000979926$ \\
$10_{81}$ & $0.000981576$ \\
$10_{82}$ & $0.000978946$ \\
\bottomrule
\end{tabular}
\hspace{0.25in}
\begin{tabular}[t]{ll} \toprule
Link & Residual\\ \midrule 
$10_{82}$ & $0.000978946$ \\
$10_{83}$ & $0.00999433$ \\
$10_{84}$ & $0.0099812$ \\
$10_{85}$ & $0.000981325$ \\
$10_{86}$ & $0.000978499$ \\
$10_{87}$ & $0.000979621$ \\
$10_{88}$ & $0.000979845$ \\
$10_{89}$ & $0.0010019$ \\
$10_{90}$ & $0.000980234$ \\
$10_{91}$ & $0.000977397$ \\
$10_{92}$ & $0.00100005$ \\
$10_{93}$ & $0.000979652$ \\
$10_{94}$ & $0.00097991$ \\
$10_{95}$ & $0.000979668$ \\
$10_{96}$ & $0.00018365$ \\
$10_{97}$ & $0.000999872$ \\
$10_{98}$ & $0.00999481$ \\
$10_{99}$ & $0.0099926$ \\
$10_{100}$ & $0.00101003$ \\
$10_{101}$ & $0.00999705$ \\
$10_{102}$ & $0.000979674$ \\
$10_{103}$ & $0.00999479$ \\
$10_{104}$ & $0.00999683$ \\
$10_{105}$ & $0.000979902$ \\
$10_{106}$ & $0.000979055$ \\
$10_{107}$ & $0.000980096$ \\
$10_{108}$ & $0.00127554$ \\
$10_{109}$ & $0.000979798$ \\
$10_{110}$ & $0.000979638$ \\
$10_{111}$ & $0.000979851$ \\
$10_{112}$ & $0.00104599$ \\
$10_{113}$ & $0.00999934$ \\
$10_{114}$ & $0.00100087$ \\
$10_{115}$ & $0.000978725$ \\
$10_{116}$ & $0.00998661$ \\
$10_{117}$ & $0.00998396$ \\
$10_{118}$ & $0.00099987$ \\
$10_{119}$ & $0.000999834$ \\
$10_{120}$ & $0.00100037$ \\
$10_{121}$ & $0.00099989$ \\
$10_{122}$ & $0.000999203$ \\
$10_{123}$ & $0.0016528$ \\
$10_{124}$ & $0.00100133$ \\
$10_{125}$ & $0.00998345$ \\
$10_{126}$ & $0.00999723$ \\
$10_{127}$ & $0.00998882$ \\
\bottomrule
\end{tabular}
\hspace{0.25in}
\end{table}
\begin{table}[h]
\caption{\label{ResidualTablethree}Part 3 of Residuals of Tight Knots and Links by Knot Type}
\begin{tabular}[t]{ll} \toprule
Link & Residual\\ \midrule 
$10_{127}$ & $0.00998882$ \\
$10_{128}$ & $0.000988223$ \\
$10_{129}$ & $0.00902523$ \\
$10_{130}$ & $0.000999987$ \\
$10_{131}$ & $0.00959976$ \\
$10_{132}$ & $0.000980876$ \\
$10_{133}$ & $0.000980018$ \\
$10_{134}$ & $0.00999485$ \\
$10_{135}$ & $0.00100006$ \\
$10_{136}$ & $0.00999149$ \\
$10_{137}$ & $0.000979856$ \\
\bottomrule
\end{tabular}
\hspace{0.25in}
\begin{tabular}[t]{ll} \toprule
Link & Residual\\ \midrule 
$10_{137}$ & $0.000979856$ \\
$10_{138}$ & $0.00899453$ \\
$10_{139}$ & $0.000979731$ \\
$10_{140}$ & $0.0099924$ \\
$10_{141}$ & $0.00100144$ \\
$10_{142}$ & $0.000980204$ \\
$10_{143}$ & $0.00993363$ \\
$10_{144}$ & $0.00995796$ \\
$10_{145}$ & $0.00102699$ \\
$10_{146}$ & $0.00998505$ \\
$10_{147}$ & $0.000999813$ \\
\bottomrule
\end{tabular}
\hspace{0.25in}
\begin{tabular}[t]{ll} \toprule
Link & Residual\\ \midrule 
$10_{147}$ & $0.000999813$ \\
$10_{148}$ & $0.000981385$ \\
$10_{149}$ & $0.00100026$ \\
$10_{150}$ & $0.000979903$ \\
$10_{151}$ & $0.000979813$ \\
$10_{152}$ & $0.00999625$ \\
$10_{153}$ & $0.0091785$ \\
$10_{154}$ & $0.00115132$ \\
$10_{155}$ & $0.00998753$ \\
$10_{156}$ & $0.0009799$ \\
$10_{157}$ & $0.000979535$ \\
\bottomrule
\end{tabular}
\hspace{0.25in}
\begin{tabular}[t]{ll} \toprule
Link & Residual\\ \midrule 
$10_{157}$ & $0.000979535$ \\
$10_{158}$ & $0.000980822$ \\
$10_{159}$ & $0.000979791$ \\
$10_{160}$ & $0.00998455$ \\
$10_{161}$ & $0.00899311$ \\
$10_{162}$ & $0.000985909$ \\
$10_{163}$ & $0.00899697$ \\
$10_{164}$ & $0.000979519$ \\
$10_{165}$ & $0.000979783$ \\
\bottomrule
\end{tabular}
\hspace{0.25in}
\end{table}

\newpage
\label{EightEighteenPage}
\index{8@8 crossing links!1@with 1 component!50@$8_{18}$}
\begin{tabular}{ll} \includegraphics[width=1.9in]{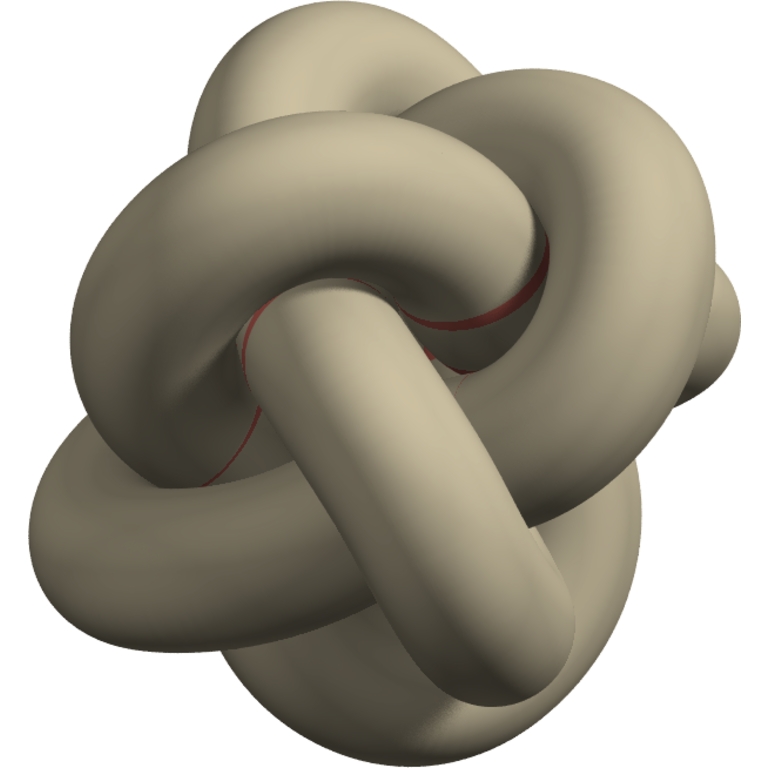} & \includegraphics[width=1.9in]{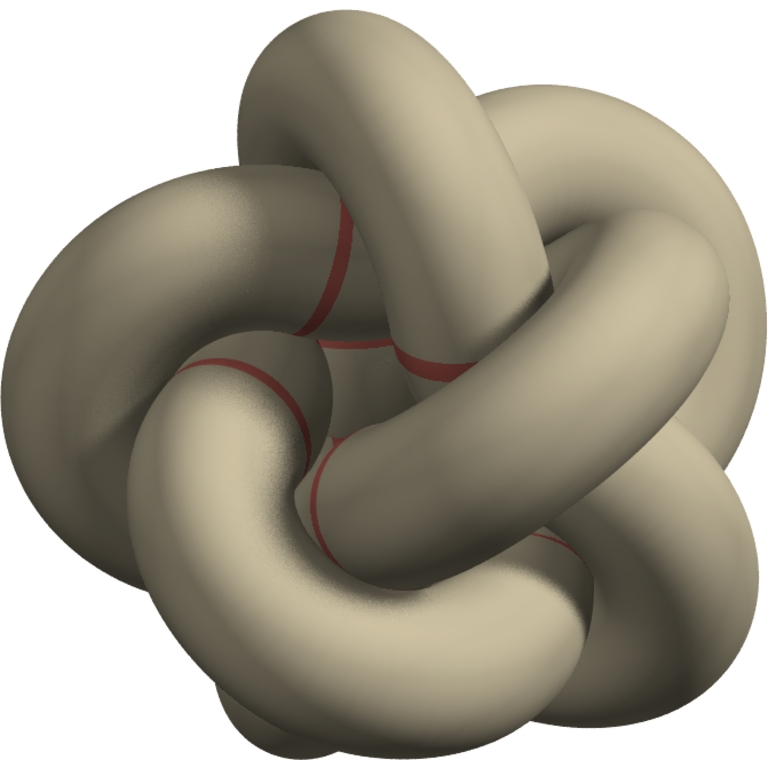} \\
\includegraphics[width=1.9in]{figs/{kl_8_18_hrbanff.a2}.jpg} & \end{tabular} 

\vspace{-2.8in}

\hspace{0.6in}\includegraphics[width=4.3in,viewport=150 145 430 560]{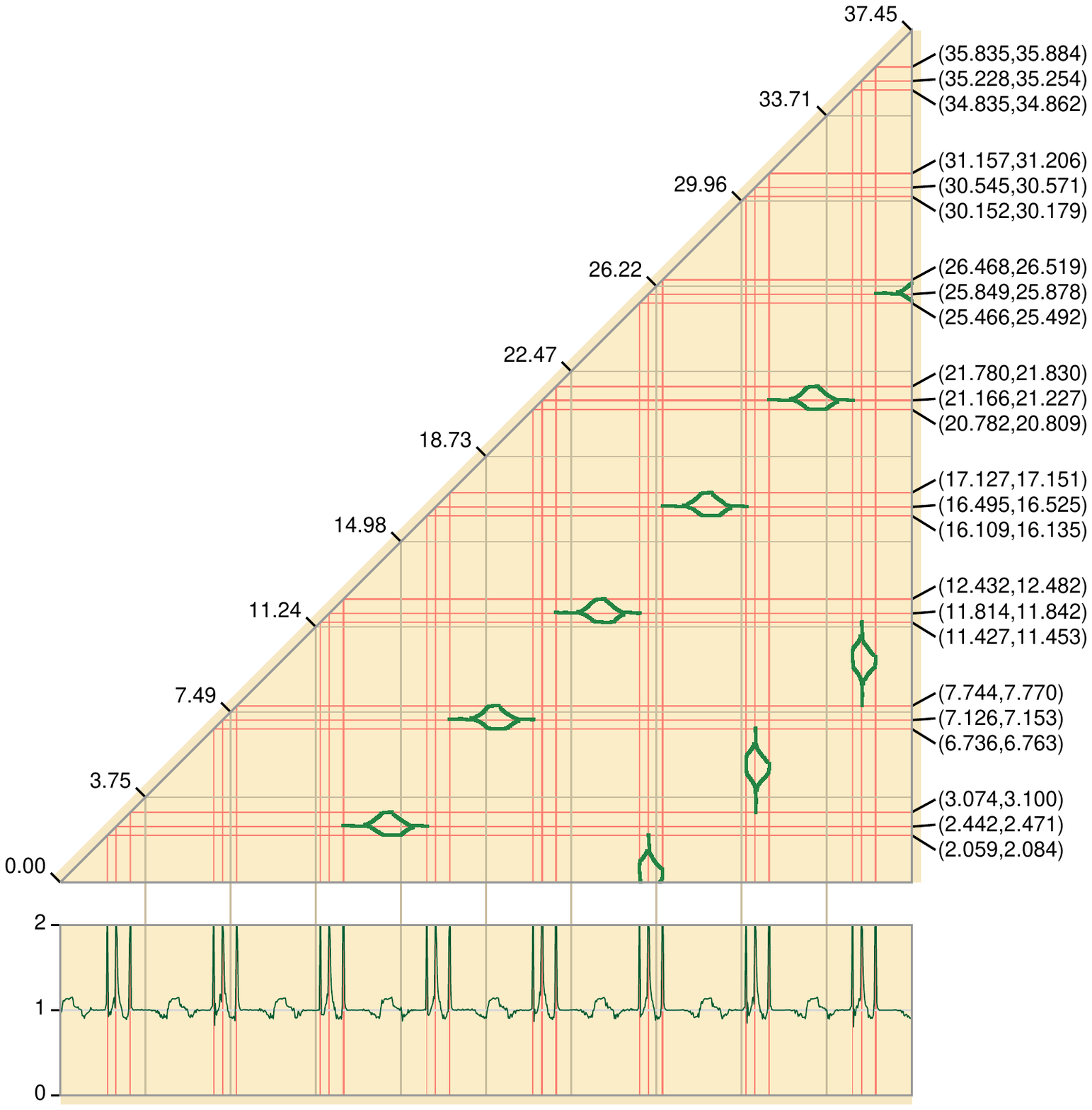} \vfill

\begin{center} \begin{tabular}{lllllllll} \toprule
Link & $\PRop$ & $\Rop$ & Filename & Verts & Struts & $\kappa$ range & Kink & Straight \\ \midrule
$8_{18}$ & $74.9114$ & $74.9063$ & \verb!kl_8_18_hrbanff.vect! & $1199$ & $5591$ & $[0.802748,2.00005]$ & $24$ & $$ \\
\bottomrule
\end{tabular} \end{center}

\newpage
\label{TenOneHundredAndTwentyThreePage}
\index{10@10 crossing links!1@with 1 component!337@$10_{123}$}
\begin{tabular}{ll} \includegraphics[width=1.9in]{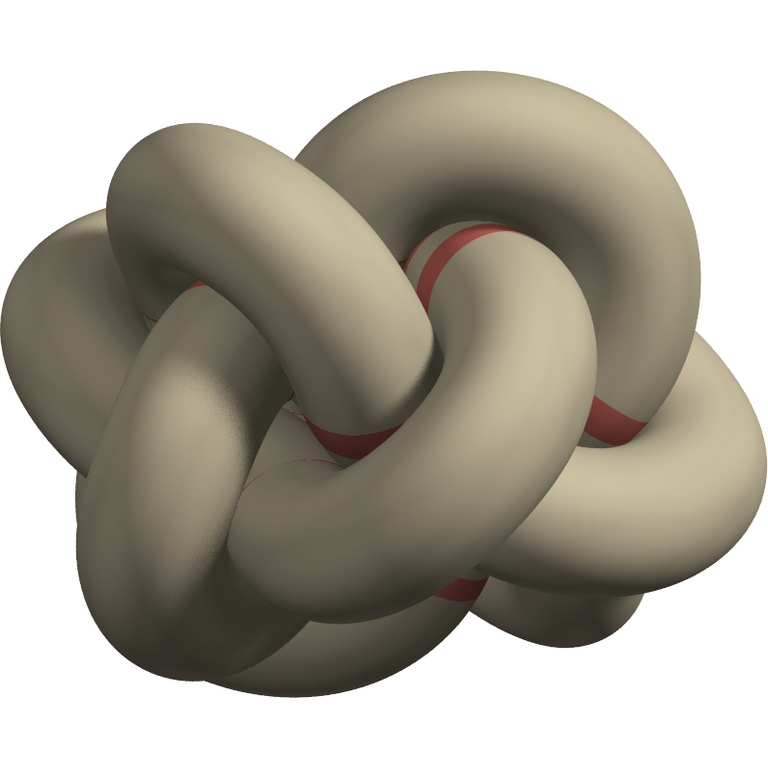} & \includegraphics[width=1.9in]{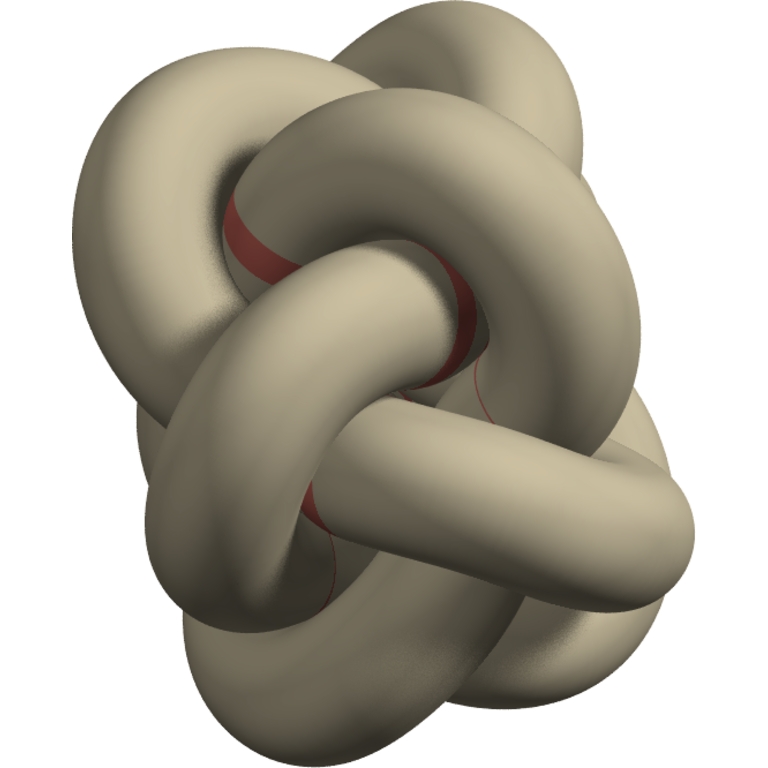} \\
\includegraphics[width=1.9in]{figs/{kl_10_123_handcrafted.a2}.jpg} & \end{tabular} 

\vspace{-2.8in}

\hspace{0.6in}\includegraphics[width=4.3in,viewport=150 145 430 560]{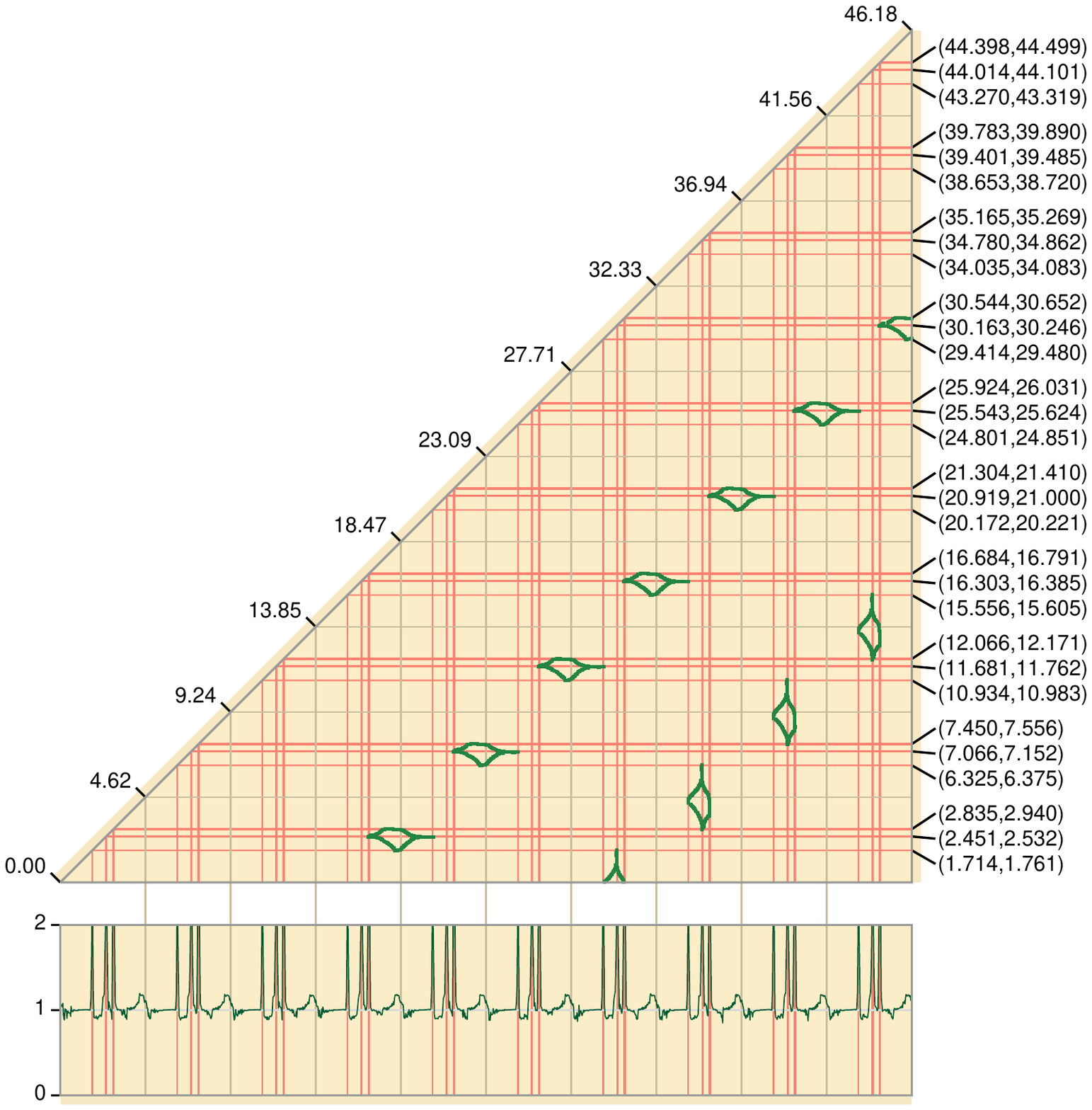} \vfill

\begin{center} \begin{tabular}{lllllllll} \toprule
Link & $\PRop$ & $\Rop$ & Filename & Verts & Struts & $\kappa$ range & Kink & Straight \\ \midrule
$10_{123}$ & $92.3646$ & $92.3565$ & \verb!kl_10_123_handcrafted.vect! & $1498$ & $7189$ & $[0.84917,2.00008]$ & $30$ & $$ \\
\bottomrule
\end{tabular} \end{center}

\end{document}